\numberwithin{equation}{section}
\numberwithin{figure}{section}
\theoremstyle{plain}
\newtheorem{thm}{Theorem}[section]
  \crefname{thm}{Theorem}{Theorems}
  \crefname{lem}{Lemma}{Lemmas}
  \newtheorem{prop}[thm]{Proposition}
  \crefname{prop}{Proposition}{Propositions}
  \newtheorem{cor}[thm]{Corollary}
	\crefname{cor}{Corollary}{Corollaries}
  \newtheorem*{theor1}{Theorem 1}
  \newtheorem*{theor2}{Theorem 2}
  \newtheorem*{theor3}{Theorem 3}
  \newtheorem*{ack}{Acknowledgments}
\theoremstyle{definition}
  \newtheorem{defi}[thm]{Definition}
  \crefname{defi}{Definition}{Definitions}
  \theoremstyle{remark}
  \newtheorem{ntn}[thm]{Notation}
  \crefname{ntn}{Notation}{Notations}
	 \theoremstyle{remark}
  \newtheorem{rem}[thm]{Remark}
  \crefname{rem}{Remark}{Remarks}
  \newtheorem{ex}[thm]{Example}
  \crefname{ex}{Example}{Examples}
\def\r{\mathbb{R}}
\def\c{\mathbb{C}}
\def\z{\mathbb{Z}}
\title[Newton-Okounkov bodies of flag varieties and combinatorial mutations]{\fontsize{11pt}{11pt}\selectfont Newton-Okounkov bodies of flag varieties and combinatorial mutations}
\date{}
\author{Naoki FUJITA}
\address[Naoki FUJITA]{Graduate School of Mathematical Sciences, The University of Tokyo, 3-8-1 Komaba, Meguro-ku, Tokyo 153-8914, Japan.}
\email{nfujita@ms.u-tokyo.ac.jp}
\author{Akihiro HIGASHITANI}
\address[Akihiro HIGASHITANI]{Department of Pure and Applied Mathematics, Graduate School of Information Science and Technology, Osaka University, Suita, Osaka 565-0871, Japan}
\email{higashitani@ist.osaka-u.ac.jp}
\subjclass[2010]{Primary 05E10; Secondary 13F60, 14M15, 14M25, 52B20}
\keywords{Newton--Okounkov body, combinatorial mutation, flag variety, cluster algebra, tropicalized mutation}
\thanks{The work of the first named author was supported by Grant-in-Aid for JSPS Fellows (No.\ 19J00123), and the work of the second named author was supported by Grant-in-Aid for Young Scientists (B) 17K14177.} 
\begin{document}
\begin{abstract}
A Newton--Okounkov body is a convex body constructed from a projective variety with a globally generated line bundle and with a higher rank valuation on the function field, which gives a systematic method of constructing toric degenerations of projective varieties. Its combinatorial properties heavily depend on the choice of a valuation, and it is a fundamental problem to relate Newton--Okounkov bodies associated with different kinds of valuations. In this paper, we address this problem for flag varieties using the framework of combinatorial mutations which was introduced in the context of mirror symmetry for Fano manifolds. By applying iterated combinatorial mutations, we connect specific Newton--Okounkov bodies of flag varieties including string polytopes, Nakashima--Zelevinsky polytopes, and FFLV polytopes.
\end{abstract}

\maketitle
\tableofcontents 

\section{Introduction}\label{s:intro}

A Newton--Okounkov body $\Delta(X, \mathcal{L}, v)$ is a convex body defined from a projective variety $X$ with a globally generated line bundle $\mathcal{L}$ on $X$ and with a higher rank valuation $v$ on the function field $\mathbb{C}(X)$, which generalizes the notion of Newton polytopes for toric varieties to other projective varieties. 
It was introduced by Okounkov \cite{Oko1, Oko2, Oko3} and developed independently by Lazarsfeld--Mustata \cite{LM} and by Kaveh--Khovanskii \cite{KK1}. 
A remarkable application is that the theory of Newton--Okounkov bodies can be used to construct toric degenerations (i.e.\ flat degenerations to toric varieties) and completely integrable systems (see \cite{And, HK}). 
Since combinatorial properties of $\Delta(X, \mathcal{L}, v)$ heavily depend on the choice of a valuation $v$, it is a fundamental problem to give concrete relations among Newton--Okounkov bodies associated with different kinds of valuations. 
In the case of flag and Schubert varieties, the following representation-theoretic polytopes can be realized as their Newton--Okounkov bodies:
\begin{enumerate}
\item[(i)] Berenstein--Littelmann--Zelevinsky's string polytopes \cite{Kav},
\item[(ii)] Nakashima--Zelevinsky polytopes \cite{FN},
\item[(iii)] FFLV (Feigin--Fourier--Littelmann--Vinberg) polytopes \cite{FeFL3, Kir},
\end{enumerate}
where the attached references are the ones giving realizations as Newton--Okounkov bodies. The set of lattice points in every polytope of {\rm (i)}--{\rm (iii)} gives a parametrization of a specific basis of an irreducible highest weight module of a semisimple Lie algebra. 
In particular, Berenstein--Littelmann--Zelevinsky's string polytopes and Nakashima--Zelevinsky polytopes give polyhedral parametrizations of the dual canonical basis and the crystal basis (see \cite{BZ, Lit, Nak, NZ}). 
Our aim in the present paper is to relate these polytopes by applying iterated combinatorial mutations. Analogous relations between flat families and combinatorial mutations were previously given in many papers \cite{Ilt, Por, EH} mainly in the context of complexity-$1$ $T$-varieties. 
In particular, Ilten \cite[Appendix A]{EH} related a wall-crossing phenomenon for Newton--Okounkov bodies of complexity-$1$ $T$-varieties with combinatorial mutations.

Combinatorial mutations for lattice polytopes were introduced by Akhtar--Coates--Galkin--Kasprzyk \cite{ACGK} in the context of mirror symmetry for Fano manifolds. 
The original motivation in \cite{ACGK} is to classify Fano manifolds using combinatorial mutations. 
A Laurent polynomial $f$ in $m$-variables is said to be a \emph{mirror partner} of an $m$-dimensional Fano manifold $X$ if the period $\pi_f$ of $f$ coincides with the quantum period $\widehat{G}_X$ of $X$ (see \cite{ACGK, CCGGK} and references therein for more details). 
Two distinct Laurent polynomials can have the same period, and there can be many choices of mirror partners for the same Fano manifold $X$.
Combinatorial mutations for Laurent polynomials are useful to find such mirror partners since they preserve the period (see \cite[Lemma 1]{ACGK}).
The notion of combinatorial mutations for lattice polytopes just rephrases that for Laurent polynomials in terms of their Newton polytopes.
Hence combinatorial mutations for lattice polytopes are also expected to preserve some combinatorial or mirror-symmetric properties, which is one motivation of this research. 
For instance, they preserve the Ehrhart series of the dual polytope.

\begin{ntn}
We adopt the standard notation in toric geometry. Let $N \simeq \z^m$ be a $\z$-lattice of rank $m$, and $M \coloneqq {\rm Hom}_\z(N, \z) \simeq \z^m$ its dual lattice. 
We write $N_\r \coloneqq N \otimes_\z \r$ and $M_\r \coloneqq M \otimes_\z \r$. 
Denote by $\langle \cdot, \cdot \rangle \colon M_\r \times N_\r \rightarrow \r$ the canonical pairing. 
\end{ntn}

The combinatorial mutation in \cite{ACGK} is an operation for lattice polytopes in $N_\r$. For a rational convex polytope $\Delta \subseteq M_\r$ with a unique interior lattice point ${\bm a}$, we define its dual $\Delta^\vee$ to be the polar dual of its translation: 
\begin{align*}
\Delta^\vee &\coloneqq (\Delta - {\bm a})^\ast\\
&= \{v \in N_\r \mid \langle u -{\bm a}, v \rangle \geq -1\ {\rm for\ all}\ u \in \Delta\}. 
\end{align*}
A combinatorial mutation on $\Delta^\vee$ corresponds to a piecewise-linear operation on $\Delta - {\bm a}$, which is extended to the whole of $M_\r$. 
We call such an operation a \emph{combinatorial mutation in} $M_\r$. We consider this framework when $\Delta$ is a Newton--Okounkov body of a flag variety.

To state our results explicitly, let $G$ be a simply-connected semisimple algebraic group over $\mathbb{C}$, $B$ a Borel subgroup of $G$, and $W$ the Weyl group. 
We denote by $X(w) \subseteq G/B$ the Schubert variety corresponding to $w \in W$, by $R(w)$ the set of reduced words for $w \in W$, and by $w_0 \in W$ the longest element. 
Note that the Schubert variety $X(w_0)$ corresponding to $w_0$ coincides with $G/B$. 
Let $P_+$ be the set of dominant integral weights, $\mathcal{L}_\lambda$ the globally generated line bundle on $X(w)$ associated with $\lambda \in P_+$, and $\rho \in P_+$ the half sum of the positive roots. 
For ${\bm i} \in R(w)$ and $\lambda \in P_+$, we denote by $\Delta_{\bm i} (\lambda)$ (resp., $\widetilde{\Delta}_{\bm i} (\lambda)$) the corresponding string polytope (resp., the corresponding Nakashima--Zelevinsky polytope). 
In order to relate string polytopes and Nakashima--Zelevinsky polytopes by combinatorial mutations, we use the theory of cluster algebras. 
The theory of cluster algebras was originally introduced by Fomin--Zelevinsky \cite{FZ:ClusterI, FZ:ClusterIV} to develop a combinatorial approach to total positivity in reductive groups and to the dual canonical basis. 
Fock--Goncharov \cite{FG} introduced a pair $(\mathcal{A}, \mathcal{X})$ of cluster varieties, called a cluster ensemble, which is a geometric counterpart of the theory of cluster algebras. 
Gross--Hacking--Keel--Kontsevich \cite{GHKK} developed the theory of cluster ensembles using methods in mirror symmetry, and proved that the theory of cluster algebras also can be used to construct toric degenerations of projective varieties. 
We denote by $U_w ^- \subseteq G$ the unipotent cell associated with $w \in W$, which is naturally thought of as an open subvariety of $X(w)$. 
Berenstein--Fomin--Zelevinsky \cite{BFZ} proved that the coordinate ring $\c[U_w ^-]$ admits an upper cluster algebra structure. 
When $G$ is simply-laced, the first named author and Oya \cite{FO2} constructed a family $\{\Delta(X(w), \mathcal{L}_\lambda, v_{\mathbf s})\}_{{\mathbf s} \in \mathcal{S}}$ of Newton--Okounkov bodies parametrized by the set of seeds for $\c[U_w ^-]$ such that 
\begin{itemize}
\item this family contains $\Delta_{\bm i} (\lambda)$ and $\widetilde{\Delta}_{\bm i} (\lambda)$ for all ${\bm i} \in R(w)$ up to unimodular transformations,
\item the Newton--Okounkov bodies $\Delta(X(w), \mathcal{L}_\lambda, v_{\mathbf s})$, ${\mathbf s} \in \mathcal{S}$, are all rational convex polytopes, 
\item the Newton--Okounkov bodies $\Delta(X(w), \mathcal{L}_\lambda, v_{\mathbf s})$, ${\mathbf s} \in \mathcal{S}$, are all related by tropicalized cluster mutations,
\end{itemize}
where a \emph{unimodular transformation} is a linear transformation which is represented by a unimodular matrix.
If $w = w_0$ and $\lambda = 2\rho$, then the Newton--Okounkov body $\Delta(G/B, \mathcal{L}_{2\rho}, v_{\mathbf s})$ contains exactly one lattice point ${\bm a}_{\mathbf s}$ in its interior, and the dual $\Delta(G/B, \mathcal{L}_{2 \rho}, v_{\mathbf s})^\vee$ is a lattice polytope (see \cref{t:unique_lattice_point_Steinert} and \cref{c:unique_lattice_point_reflexivity}). Realizing tropicalized cluster mutations as combinatorial mutations in $M_\r$, we obtain the following.

\begin{theor1}[{\cref{t:combinatorial_mutations_NO}}]
If $G$ is simply-laced, then the following hold.
\begin{enumerate}
\item[{\rm (1)}] For fixed $w \in W$ and $\lambda \in P_+$, the Newton--Okounkov bodies $\Delta(X(w), \mathcal{L}_\lambda, v_{\mathbf s})$, ${\mathbf s} \in \mathcal{S}$, are all related by combinatorial mutations in $M_\r$ up to unimodular transformations.
\item[{\rm (2)}] For $w = w_0$ and $\lambda = 2\rho$, the translated polytopes $\Delta(G/B, \mathcal{L}_{2\rho}, v_{\mathbf s}) -{\bm a}_{\mathbf s}$, ${\mathbf s} \in \mathcal{S}$, are all related by combinatorial mutations in $M_\r$ up to unimodular transformations. 
As a consequence, the dual polytopes $\Delta(G/B, \mathcal{L}_{2\rho}, v_{\mathbf s})^\vee$, ${\mathbf s} \in \mathcal{S}$, are all related by combinatorial mutations in $N_\r$ up to unimodular transformations.
\end{enumerate}
\end{theor1}

In order to relate FFLV polytopes with these Newton--Okounkov bodies, we use Ardila--Bliem--Salazar's transfer map \cite{ABS} between the Gelfand--Tsetlin polytope $GT(\lambda)$ and the FFLV polytope $FFLV(\lambda)$ in type $A_n$, where $\lambda \in P_+$. 
A \emph{unimodular affine transformation} is a composition of a unimodular transformation and a translation by an integer vector. 
We say that two polytopes are \emph{unimodularly equivalent} if they are related by a unimodular affine transformation.
Littelmann \cite{Lit} showed that the string polytope $\Delta_{\bm i} (\lambda)$ associated with specific ${\bm i} \in R(w_0)$ is unimodularly equivalent to the Gelfand--Tsetlin polytope $GT(\lambda)$; see \cref{ex:GT_polytopes_type_A}.
We realize their transfer map as a composition of combinatorial mutations in $M_\r$ (see Theorem \ref{t:relation_with_FFLV_type_A}). 
Combining this with Theorem 1, we obtain the following in type $A_n$.

\begin{theor2}[{see Theorems \ref{t:combinatorial_mutations_NO}, \ref{t:relation_with_FFLV_type_A}}]
If $G = SL_{n+1} (\c)$, then the following hold.
\begin{enumerate}
\item[{\rm (1)}] For fixed $\lambda \in P_+$, the string polytopes $\Delta_{\bm i} (\lambda)$, ${\bm i} \in R(w_0)$, the Nakashima--Zelevinsky polytopes $\widetilde{\Delta}_{\bm i} (\lambda)$, ${\bm i} \in R(w_0)$, and the FFLV polytope $FFLV(\lambda)$ are all related by combinatorial mutations in $M_\r$ up to unimodular equivalence.
\item[{\rm (2)}] The polytopes in 
\[\{\Delta_{\bm i} (2\rho)^\vee \mid {\bm i} \in R(w_0)\} \cup \{\widetilde{\Delta}_{\bm i} (2\rho)^\vee \mid {\bm i} \in R(w_0)\} \cup \{FFLV(2\rho)^\vee\}\] 
are all related by combinatorial mutations in $N_\r$ up to unimodular transformations.
\end{enumerate}
\end{theor2}

Note that Ardila--Bliem--Salazar \cite{ABS} gave such a transfer map also in type $C_n$. Since their transfer map in type $C_n$ can be also described as a composition of combinatorial mutations in $M_\r$, we obtain the following.

\begin{theor3}[{\cref{t:relation_with_FFLV_type_C}}]
If $G = Sp_{2n} (\c)$, then the following hold.
\begin{enumerate}
\item[{\rm (1)}] For fixed $\lambda \in P_+$, the Gelfand--Tsetlin polytope $GT_{C_n}(\lambda)$ and the FFLV polytope $FFLV_{C_n}(\lambda)$ are related by combinatorial mutations in $M_\r$ up to translations by integer vectors.
\item[{\rm (2)}] The dual polytopes $GT_{C_n}(2\rho)^\vee$ and $FFLV_{C_n}(2\rho)^\vee$ are related by combinatorial mutations in $N_\r$.
\end{enumerate}
\end{theor3}

\begin{ack}\normalfont
The authors are grateful to Xin Fang and Ghislain Fourier for explaining relations with polytopes introduced in \cite{FF, FFLP}. 
The authors would like to thank Alexander Kasprzyk for informing them of some related works.
The authors would also like to thank the anonymous referees for reading the manuscript carefully and for suggesting many improvements.
\end{ack}

\bigskip

\section{Newton--Okounkov bodies arising from cluster structures}\label{s:NO_body}

In order to relate string polytopes and Nakashima--Zelevinsky polytopes by combinatorial mutations, we use Newton--Okounkov bodies of flag varieties arising from cluster structures. In Section \ref{ss:NO_body}, we recall the definitions of higher rank valuations and Newton--Okounkov bodies. We also review their basic properties. In Section \ref{ss:cluster_algebra}, we define valuations using cluster structures, following \cite{FO2}.

\subsection{Basic definitions on Newton--Okounkov bodies}\label{ss:NO_body}

We first recall the definition of Newton--Okounkov bodies, following \cite{HK, Kav, KK1, KK2}. Let $R$ be a $\mathbb{C}$-algebra without nonzero zero-divisors, and $m \in \z_{>0}$. We fix a total order $\leq$ on $\mathbb{Z}^m$ respecting the addition. 

\begin{defi}\label{defval}\normalfont
	A map $v \colon R \setminus \{0\} \rightarrow \mathbb{Z}^m$ is called a \emph{valuation} on $R$ with values in $\z^m$ if for each $\sigma, \tau \in R \setminus \{0\}$ and $c \in \mathbb{C}^\times \coloneqq \mathbb{C} \setminus \{0\}$, we have
	\begin{enumerate}
		\item[{\rm (i)}] $v(\sigma \cdot \tau) = v(\sigma) + v(\tau)$,
		\item[{\rm (ii)}] $v(c \cdot \sigma) = v(\sigma)$, 
		\item[{\rm (iii)}] $v (\sigma + \tau) \geq {\rm min} \{v(\sigma), v(\tau) \}$ unless $\sigma + \tau = 0$. 
	\end{enumerate}
\end{defi}

For ${\bm a} \in \z^m$ and a valuation $v$ on $R$ with values in $\z^m$, we define a $\c$-subspace $R_{\bm a} \subseteq R$ as follows:
\[
R_{\bm a} \coloneqq \{\sigma \in R \setminus \{0\} \mid v(\sigma) \geq {\bm a}\} \cup \{0\}.
\]
Then the \emph{leaf} $\widehat{R}_{\bm a}$ above ${\bm a} \in \z^m$ is defined by $\widehat{R}_{\bm a} \coloneqq R_{\bm a}/\bigcup_{{\bm a} < {\bm b}} R_{\bm b}$. We say that a valuation $v$ has \emph{$1$-dimensional leaves} if $\dim_\c(\widehat{R}_{\bm a}) = 0\ {\rm or}\ 1$ for all ${\bm a} \in \z^m$. 

\begin{ex}\label{ex:lowest_term_valuation}
Fix a total order $\leq$ on $\mathbb{Z}^m$ respecting the addition, and let $\mathbb{C}(z_1, \ldots, z_m)$ be the field of rational functions in $m$ variables. The total order $\leq$ on $\mathbb{Z}^m$ gives a total order (denoted by the same symbol $\leq$) on the set of Laurent monomials in $z_1, \ldots, z_m$ by
\begin{center}
$z_1 ^{a_1} \cdots z_m ^{a_m} \leq z_1 ^{a_1 ^\prime} \cdots z_m ^{a_m ^\prime}$ if and only if $(a_1, \ldots, a_m) \leq (a_1 ^\prime, \ldots, a_m ^\prime)$. 
\end{center}
We define a map $v^{\rm low}_{\leq} \colon \mathbb{C}(z_1, \ldots, z_m) \setminus \{0\} \rightarrow \mathbb{Z}^m$ by
\begin{itemize}
\item $v^{\rm low} _{\leq} (f) \coloneqq (a_1, \ldots, a_m)$ for
\[
f = c z_1 ^{a_1} \cdots z_m ^{a_m} + ({\rm higher\ terms}) \in \mathbb{C}[z_1 ^{\pm 1}, \ldots, z_m ^{\pm 1}] \setminus \{0\},
\]
where $c \in \mathbb{C}^\times$, and we mean by ``(higher terms)'' a linear combination of Laurent monomials bigger than $z_1 ^{a_1} \cdots z_m ^{a_m}$ with respect to the total order $\leq$,
\item $v^{\rm low}_{\leq} (f/g) \coloneqq v^{\rm low} _{\leq} (f) - v^{\rm low} _{\leq} (g)$ for $f, g \in \mathbb{C}[z_1 ^{\pm 1}, \ldots, z_m ^{\pm 1}] \setminus \{0\}$.
\end{itemize}
Then this map $v^{\rm low} _{\leq}$ is a valuation with respect to the total order $\leq$, which has $1$-dimensional leaves. This is called the {\it lowest term valuation} with respect to $\leq$.
\end{ex}

\begin{defi}[{see \cite[Definition 1.10]{KK2}}]\normalfont\label{Newton--Okounkov body}
Let $X$ be an irreducible normal projective variety over $\c$, $\mathcal{L}$ a line bundle on $X$ generated by global sections, and $m \coloneqq \dim_\c (X)$. 
Take a valuation $v \colon \mathbb{C}(X) \setminus \{0\} \rightarrow \z^m$ which has $1$-dimensional leaves, and fix a nonzero section $\tau \in H^0 (X, \mathcal{L})$. 
Let us define a semigroup $S(X, \mathcal{L}, v, \tau) \subseteq \z_{>0} \times \z^m$ by \[S(X, \mathcal{L}, v, \tau) \coloneqq \bigcup_{k \in \z_{>0}} \{(k, v(\sigma / \tau^k)) \mid \sigma \in H^0(X, \mathcal{L}^{\otimes k}) \setminus \{0\}\},\] and denote by $C(X, \mathcal{L}, v, \tau) \subseteq \r_{\geq 0} \times \r^m$ the smallest real closed cone containing $S(X, \mathcal{L}, v, \tau)$. 
We define a convex set $\Delta(X, \mathcal{L}, v, \tau) \subseteq \r^m$ by \[\Delta(X, \mathcal{L}, v, \tau) \coloneqq \{{\bm a} \in \r^m \mid (1, {\bm a}) \in C(X, \mathcal{L}, v, \tau)\},\] which is called the {\it Newton--Okounkov body} of $(X, \mathcal{L})$ associated with $(v, \tau)$.
In the notation of \cite[Definition 1.10]{KK2}, our Newton--Okounkov body $\Delta(X, \mathcal{L}, v, \tau)$ is $\Delta(S, M)$ for $S = S(X, \mathcal{L}, v, \tau)$ and $M = L(S) \cap (\r_{\geq 0} \times \r^m)$, where $L(S) \subseteq \r \times \r^m$ denotes the linear span of $S$ (see also \cite[Section 3.2]{KK2} and \cite[Section 1.2]{Kav}).
\end{defi}

It follows by \cite[Theorem 2.30]{KK2} that the Newton--Okounkov body $\Delta(X, \mathcal{L}, v, \tau)$ is a convex body, i.e., a compact convex set. 
If $\mathcal{L}$ is ample, then we see by \cite[Corollary 3.2]{KK2} that $\dim_\r \Delta(X, \mathcal{L}, v, \tau) = m = \dim_\c (X)$.
When $\mathcal{L}$ is not ample, the real dimension of $\Delta(X, \mathcal{L}, v, \tau)$ can be smaller than $m$.
By definition, we have 
\[0 = v(\tau/\tau) \in \Delta(X, \mathcal{L}, v, \tau).\]
Since $S(X, \mathcal{L}, v, \tau)$ is a semigroup, the definition of Newton--Okounkov bodies implies that 
\[
\Delta(X, \mathcal{L}^{\otimes k}, v, \tau^k) = k \Delta(X, \mathcal{L}, v, \tau)
\]
for all $k \in \z_{> 0}$. 

\begin{rem}\label{independence}
For another nonzero section $\tau^\prime \in H^0 (X, \mathcal{L})$, it follows that 
\[
S(X, \mathcal{L}, v, \tau^\prime) \cap (\{k\} \times \mathbb{Z}^m) = (S(X, \mathcal{L}, v, \tau) \cap (\{k\} \times \mathbb{Z}^m)) + (0, k v(\tau/\tau^\prime))
\] 
for all $k \in \mathbb{Z}_{>0}$. From this, we have
\[
\Delta(X, \mathcal{L}, v, \tau^\prime) = \Delta(X, \mathcal{L}, v, \tau) + v(\tau/\tau^\prime),
\] 
which implies that $\Delta(X, \mathcal{L}, v, \tau)$ does not essentially depend on the choice of $\tau$. Hence we also denote it simply by $\Delta(X, \mathcal{L}, v)$.
\end{rem} 

\subsection{Cluster algebras and valuations}\label{ss:cluster_algebra}

The first named author and Oya \cite{FO2} constructed valuations using the theory of cluster algebras. 
In this subsection, we review this construction. 
We first recall the definition of (upper) cluster algebras of geometric type, following \cite{BFZ, FZ:ClusterIV}. 
Note that we use the notation in \cite{FG, GHKK}. 
Fix a finite set $J$ and a subset $J_{\rm uf} \subseteq J$. 
We write $J_{\rm fr} \coloneqq J \setminus J_{\rm uf}$. Let $\mathcal{F} \coloneqq \mathbb{C}(z_j \mid j \in J)$ be the field of rational functions in $|J|$ variables. 
For a $J$-tuple $\mathbf{A} = (A_j)_{j \in J}$ of elements of $\mathcal{F}$ and $\varepsilon = (\varepsilon_{i, j})_{i \in J_{\rm uf}, j \in J} \in {\rm Mat}_{J_{\rm uf} \times J}(\mathbb{Z})$, the pair ${\mathbf s} = (\mathbf{A}, \varepsilon)$ is called a \emph{seed} of $\mathcal{F}$ if 
\begin{itemize}
	\item[(i)] $\mathbf{A}$ forms a free generating set of the field $\mathcal{F}$, and
   \item[(ii)] the $J_{\rm uf} \times J_{\rm uf}$-submatrix $\varepsilon^{\circ}$ of $\varepsilon$ is skew-symmetrizable, that is, there is $(d_i)_{i \in J_{\rm uf}} \in \mathbb{Z}^{J_{\rm uf}}_{>0}$ such that $d_i \varepsilon_{i, j} = -d_j \varepsilon_{j, i}$ for all $i, j \in J_{\rm uf}$.  
\end{itemize}
In this case, we call $\varepsilon$ the \emph{exchange matrix} of ${\mathbf s}$. Note that the exchange matrix $\varepsilon$ is transposed to the one in \cite[Section 2]{FZ:ClusterIV}.

Let ${\mathbf s} = (\mathbf{A}, \varepsilon) = ((A_j)_{j \in J}, (\varepsilon_{i, j})_{i \in J_{\rm uf}, j \in J})$ be a seed of $\mathcal{F}$. We write $[c]_+ \coloneqq \max\{c, 0\}$ for $c \in \r$. For $k \in J_{\rm uf}$, define the \emph{mutation} $\mu_k ({\mathbf s}) = (\mu_k (\mathbf{A}), \mu_k (\varepsilon)) = ((A^\prime _j)_{j \in J}, (\varepsilon^\prime _{i, j})_{i \in J_{\rm uf}, j \in J})$ in direction $k$ as follows:
\begin{align*}
\varepsilon^\prime _{i, j} \coloneqq \begin{cases}
-\varepsilon_{i, j}&\ \text{if}\ i=k\ {\rm or}\ j=k,\\
\varepsilon_{i, j} + {\rm sgn}(\varepsilon_{i, k})[\varepsilon_{i, k} \varepsilon_{k, j}]_+&\ {\rm otherwise},
\end{cases}
\end{align*}
\begin{align*}
A_j ^\prime \coloneqq \begin{cases}
\displaystyle \frac{\prod_{\ell \in J} A_\ell ^{[\varepsilon_{k, \ell}]_+} + \prod_{\ell \in J} A_\ell ^{[-\varepsilon_{k, \ell}]_+}}{A_k} &\ {\rm if}\ j = k,\\
A_j &\ {\rm otherwise}
\end{cases}
\end{align*}
for $i \in J_{\rm uf}$ and $j \in J$. 
Then $\mu_k ({\mathbf s})$ is also a seed of $\mathcal{F}$, and it follows that $\mu_k \mu_k ({\mathbf s}) = {\mathbf s}$. 
We say that two seeds ${\mathbf s}$ and ${\mathbf s}^\prime$ are \emph{mutation equivalent} if there exists a sequence $(k_1, k_2, \ldots, k_j)$ in $J_{\rm uf}$ such that 
\[{\mathbf s}^\prime = \mu_{k_j} \cdots \mu_{k_2} \mu_{k_1} ({\mathbf s}).\]

\begin{ex}\label{ex:basic_example_cluster_mutation}
Let $J = \{1, \ldots, 6\}$, and $J_{\rm uf} = \{1, 2, 3\}$. 
Take a seed ${\mathbf s} = (\mathbf{A}, \varepsilon)$ of $\mathcal{F}$ whose exchange matrix $\varepsilon$ is given by 
\[\varepsilon = \begin{pmatrix}
0 & -1 & 1 & 0 & 0 & 0 \\
1 & 0 & -1 & -1 & 1 & 0 \\
-1 & 1 & 0 & 0 & -1 & 1 
\end{pmatrix}.\]
Then the mutation $\mu_2 ({\mathbf s}) = (\mu_2 (\mathbf{A}), \mu_2 (\varepsilon))$ in direction $2$ is given as follows: 
\begin{align*}
&\mu_2 (\varepsilon) = \begin{pmatrix}
0 & 1 & 0 & -1 & 0 & 0 \\
-1 & 0 & 1 & 1 & -1 & 0 \\
0 & -1 & 0 & 0 & 0 & 1 
\end{pmatrix},\\
&A_j ^\prime = \begin{cases}
\displaystyle \frac{A_1 A_5 + A_3 A_4}{A_2} &\ {\rm if}\ j = 2,\\
A_j &\ {\rm otherwise}
\end{cases}
\end{align*}
for $j \in J$, where we write $\mathbf{A} = (A_j)_{j \in J}$ and $\mu_2 (\mathbf{A}) = (A^\prime _j)_{j \in J}$. 
\end{ex}

Let $\mathbb{T}$ be the $|J_{\rm uf}|$-regular tree whose edges are labeled by $J_{\rm uf}$ such that the $|J_{\rm uf}|$-edges emanating from each vertex have different labels. 
Let us write $t \overset{k}{\text{---}} t^\prime$ when $t, t^\prime \in \mathbb{T}$ are joined by an edge labeled by $k \in J_{\rm uf}$. 
Note that $\mathbb{T}$ is an infinite tree when $|J_{\rm uf}| \geq 2$.

\begin{ex}
If $J_{\rm uf} = \{1, 2\}$, then the $2$-regular tree $\mathbb{T}$ is given as 
\begin{align*}
\begin{xy}
\ar@{-}^-{1} (50,0) *\cir<3pt>{}="A";
(60,0) *\cir<3pt>{}="B"
\ar@{-}^-{2} "B";(70,0) *\cir<3pt>{}="C"
\ar@{-}^-{1} "C";(80,0) *\cir<3pt>{}="D"
\ar@{-}^-{2} "D";(90,0) *\cir<3pt>{}="E"
\ar@{-} "E";(95,0)
\ar@{-} "A";(45,0)
\ar@{.} (95,0);(100,0)^*!U{}
\ar@{.} (45,0);(40,0)^*!U{}
\end{xy}
\end{align*}
\end{ex}

An assignment $\mathcal{S} = \{{\mathbf s}_t\}_{t \in \mathbb{T}} = \{(\mathbf{A}_t, \varepsilon_t)\}_{t \in \mathbb{T}}$ of a seed ${\mathbf s}_t = (\mathbf{A}_t, \varepsilon_t)$ of $\mathcal{F}$ to each vertex $t \in \mathbb{T}$ is called a \emph{cluster pattern} if $\mu_k ({\mathbf s}_t) = {\mathbf s}_{t^\prime}$ whenever $t \overset{k}{\text{---}} t^\prime$. 
Given a cluster pattern $\mathcal{S} = \{{\mathbf s}_t = (\mathbf{A}_t, \varepsilon_t)\}_{t \in \mathbb{T}}$, we write 
\[
\mathbf{A}_t = (A_{j; t})_{j \in J}, \quad \varepsilon_t = (\varepsilon_{i, j} ^{(t)})_{i \in J_{\rm uf}, j \in J}.
\]

\begin{defi}[{see \cite[Definitions 1.6 and 1.11]{BFZ}}]\normalfont
For a cluster pattern $\mathcal{S}$, the \emph{upper cluster algebra} $\mathscr{U}(\mathcal{S})$ \emph{of geometric type} is defined by 
\[
\mathscr{U}(\mathcal{S}) \coloneqq \bigcap_{t \in \mathbb{T}} \c[A_{j; t} ^{\pm 1} \mid j \in J] \subseteq \mathcal{F}.
\]
\end{defi}

Usually, we fix $t_0 \in \mathbb{T}$ and construct a cluster pattern $\mathcal{S} = \{{\mathbf s}_t\}_{t \in \mathbb{T}}$ from one seed ${\mathbf s}_{t_0}$ of $\mathcal{F}$. 
In this case, we call ${\mathbf s}_{t_0}$ the \emph{initial seed}. 

\begin{thm}[{\cite[Theorem 3.1]{FZ:ClusterI}}]\label{t:laurentpheno}
Let $\mathcal{S}$ be a cluster pattern. 
Then the set $\{A_{j; t} \mid t \in \mathbb{T},\ j \in J\}$ is included in the upper cluster algebra $\mathscr{U}(\mathcal{S})$; this property is called the \emph{Laurent phenomenon}.
\end{thm}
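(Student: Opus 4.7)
The plan is to fix an arbitrary initial vertex $t_0 \in \mathbb{T}$ and show that every cluster variable $A_{j;t}$ belongs to $\mathbb{C}[A_{k;t_0}^{\pm 1} \mid k \in J]$; since the definition of $\mathscr{U}(\mathcal{S})$ intersects over all vertices, the theorem follows by letting $t_0$ vary. Thus the real content is a Laurent-phenomenon statement relative to a single seed, and I would argue by induction on the graph distance $d(t_0, t)$ in $\mathbb{T}$.

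The base cases are handled directly from the mutation formulas. When $d(t_0,t)=0$ the claim is trivial, and when $d(t_0, t) = 1$ with $t_0 \overset{k}{\text{---}}t$ we either have $A_{j;t} = A_{j;t_0}$ for $j \neq k$, or the exchange relation
\[
A_{k;t_0}A_{k;t} \;=\; \prod_{\ell \in J} A_{\ell;t_0}^{[\varepsilon_{k,\ell}^{(t_0)}]_+} \;+\; \prod_{\ell \in J} A_{\ell;t_0}^{[-\varepsilon_{k,\ell}^{(t_0)}]_+}
\]
exhibits $A_{k;t}$ as a Laurent polynomial in $\mathbf{A}_{t_0}$ with denominator $A_{k;t_0}$.

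For the inductive step, given $t$ with $d(t_0,t) = n+1$, one chooses a neighbor $t'$ of $t$ with $d(t_0,t') = n$ and uses the exchange relation at $t'$ to write $A_{j;t}$ as a rational function in $\mathbf{A}_{t'}$. By induction each $A_{\ell;t'}$ is Laurent in $\mathbf{A}_{t_0}$, but naive substitution fails to preserve Laurent-ness because the denominator $A_{k;t'}$ coming from the exchange relation need not divide the numerator in $\mathbb{C}[A_{k;t_0}^{\pm1} \mid k\in J]$. This is the main obstacle, and I expect resolving it to be the only non-formal part of the argument.

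To overcome this I would invoke the caterpillar strategy of Fomin--Zelevinsky: rather than inducting on arbitrary paths, one shows Laurent-ness along "caterpillar" paths in which one alternates mutations in a bounded number of directions, and then exploits the commutativity of mutations in directions $k, \ell$ with $\varepsilon_{k,\ell}^{(t')} = \varepsilon_{\ell,k}^{(t')} = 0$ to reduce the general case to this controlled setting. The key local computation is that for two successive mutations $\mu_k$ then $\mu_\ell$, the resulting cluster variable admits an explicit rational expression in $\mathbf{A}_{t_0}$ whose apparent denominators cancel upon expanding the exchange binomial; this coprimality is checked by a direct degree and support analysis using that $A_{k;t_0}, A_{\ell;t_0}$ are algebraically independent and the binomial numerators have no common factor with them. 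Iterating this local cancellation along the caterpillar completes the induction.
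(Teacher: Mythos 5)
This statement is not proved in the paper; it is quoted verbatim as a theorem of Fomin and Zelevinsky with the citation \cite[Theorem 3.1]{FZ:ClusterI}, so there is no in-paper argument to compare against. Judged on its own terms, your sketch gets the framing right: reducing to Laurent-ness of each $A_{j;t}$ in a fixed cluster $\mathbf{A}_{t_0}$, handling distance $0$ and $1$ via the exchange relation, and isolating the obstruction that the quotient by $A_{k;t'}$ need not a priori be a Laurent polynomial. This matches the opening moves of Fomin--Zelevinsky's proof.

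Where the proposal has a genuine gap is in the ``caterpillar'' step, which is where all the work actually lies. The Caterpillar Lemma is not about alternating mutations in a bounded number of directions, nor does it rest on commutativity of mutations $\mu_k,\mu_\ell$ when $\varepsilon_{k,\ell}=\varepsilon_{\ell,k}=0$. Fomin and Zelevinsky embed any finite walk in $\mathbb{T}$ into a caterpillar tree (a spine with legs) and run an induction along the spine relative to its head $t_{\mathrm{head}}$; the inductive step is a local statement about a chain of three consecutive edges, say $t_0 \overset{i}{\text{---}} t_1 \overset{j}{\text{---}} t_2 \overset{i}{\text{---}} t_3$, and turns on verifying specific gcd/coprimality conditions on the exchange binomials in the ambient ring of Laurent polynomials, so that the denominator $A_{i;t_2}$ arising at the last step is seen to divide the relevant numerator. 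Your sketch compresses this to ``two successive mutations'' and then defers the cancellation to ``a direct degree and support analysis'' — but that analysis is precisely the non-trivial content of the theorem, and the two-step local picture is not enough to close the induction. As written, the proposal is a correct outline of the difficulty, not a proof; you would need to state and verify the coprimality conditions of the Caterpillar Lemma (or give an alternative argument, e.g.\ via the $F$-polynomial/$\mathbf{g}$-vector separation formulas or the Gross--Hacking--Keel--Kontsevich scattering-diagram approach) to complete it.
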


In the rest of this subsection, assume that 
\begin{enumerate}
\item[($\diamondsuit$)] the exchange matrix $\varepsilon_{t_0}$ of ${\mathbf s}_{t_0}$ is of full rank for some $t_0 \in \mathbb{T}$,
\end{enumerate}
which implies that the exchange matrix $\varepsilon_t$ of ${\mathbf s}_t$ is of full rank for all $t \in \mathbb{T}$ (see \cite[Lemma 3.2]{BFZ}). 

\begin{defi}[{\cite[Definition 3.1.1]{Qin}}]\label{d:order}\normalfont
Let $\mathcal{S} = \{{\mathbf s}_t = (\mathbf{A}_t, \varepsilon_t)\}_{t \in \mathbb{T}}$ be a cluster pattern, and fix $t \in \mathbb{T}$. Then we define a partial order $\preceq_{\varepsilon_t}$ on $\mathbb{Z}^{J}$ as follows: for ${\bm a}, {\bm a}^\prime \in \mathbb{Z}^{J}$,
\[
{\bm a} \preceq_{\varepsilon_t} {\bm a}^\prime\ {\rm if\ and\ only\ if}\ {\bm a} = {\bm a}^\prime + {\bm u} \varepsilon_t\ {\rm for\ some}\ {\bm u} \in \mathbb{Z}_{\geq 0}^{J_{\rm uf}},
\]
where elements of $\mathbb{Z}^{J}$ (resp., $\mathbb{Z}_{\geq 0}^{J_{\rm uf}}$) are regarded as row vectors. This $\preceq_{\varepsilon_t}$ is called the \emph{dominance order} associated with $\varepsilon_t$.
\end{defi}

\begin{defi}[{\cite[Definition 3.8]{FO2}}]\normalfont\label{d:main_valuation}
Let $\mathcal{S} = \{{\mathbf s}_t = (\mathbf{A}_t, \varepsilon_t)\}_{t \in \mathbb{T}}$ be a cluster pattern, and fix $t \in \mathbb{T}$. We take a total order $\leq_t$ on $\z^J$ which refines the opposite order $\preceq_{\varepsilon_t} ^{\rm op}$ of the dominance order $\preceq_{\varepsilon_t}$. 
The total order $\leq_t$ on $\z^J$ gives a total order (denoted by the same symbol $\leq_t$) on the set of Laurent monomials in $A_{j;t}$, $j \in J$, as follows: 
\begin{align*}
\prod_{j \in J} A_{j;t} ^{a_j} \leq_t \prod_{j \in J} A_{j;t} ^{a_j ^\prime}\quad \text{if and only if}\quad (a_j)_{j \in J} \leq_t (a_j ^\prime)_{j \in J}. 
\end{align*}
Then we denote by $v_{{\mathbf s}_t}$ (or simply by $v_t$) the lowest term valuation $v^{\rm low} _{\leq_t}$ on $\mathcal{F} = \c(A_{j; t} \mid j \in J)$ with respect to $\leq_t$. 
\end{defi} 

Following \cite[Conjecture 7.12]{FZ:ClusterIV} and \cite[Section 4]{FG}, we define a \emph{tropicalized cluster mutation} as follows: for $t \overset{k}{\text{---}} t^\prime$,
\[\mu_k ^T \colon \r^J \rightarrow \r^J,\ (g_j)_{j \in J} \mapsto (g' _j)_{j \in J},\] 
where 
\begin{align*}
g' _j \coloneqq
\begin{cases}
g_j + [-\varepsilon_{k, j} ^{(t)}]_+ g_k + \varepsilon_{k, j} ^{(t)} [g_k]_+ &(j \neq k),\\
-g_j &(j = k)
\end{cases}
\end{align*}
for $j \in J$; this is the tropicalization of the mutation $\mu_k$ for the Fock--Goncharov dual $\mathcal{A}^\vee$ of the $\mathcal{A}$-cluster variety (see also \cite[Section 2 and Definition A.4]{GHKK}).
As we review in Section \ref{ss:Schubert_as_cluster}, the tropicalized cluster mutation $\mu_k ^T$ can be used to connect Newton--Okounkov bodies associated with $v_t$ and $v_{t^\prime}$.

\begin{ex}
Let $J = \{1, \ldots, 6\}$, and $J_{\rm uf} = \{1, 2, 3\}$. 
If $\varepsilon_t$ is given as the matrix $\varepsilon$ in \cref{ex:basic_example_cluster_mutation} for some $t \in \mathbb{T}$, then the tropicalized cluster mutation $\mu_2 ^T \colon \r^6 \rightarrow \r^6$ at $t$ is given by
\[\mu_2 ^T (g_1, \ldots, g_6) = (g_1 + [g_2]_+, -g_2, g_3 - [-g_2]_+, g_4 - [-g_2]_+, g_5 + [g_2]_+, g_6).\]
\end{ex}

\bigskip

\section{Combinatorial mutations and tropicalized cluster mutations}

In this section, we recall the notion of combinatorial mutations for lattice polytopes which was developed by Akhtar--Coates--Galkin--Kasprzyk in \cite{ACGK}. 
Then we realize tropicalized cluster mutations as combinatorial mutations.
There are two kinds of combinatorial mutations: one is the operation in $N_\r$-side and the other one is in $M_\r$-side. 
Our main interest is the operation in $M_\r$-side (see Definition~\ref{def:M_side}) and this is originally defined as a ``dual version'' of the operation in $N_\r$-side. See Proposition~\ref{prop:compatibility}. 

\subsection{Basic definitions on combinatorial mutations}

We first introduce combinatorial mutations for lattice polytopes in $N_\r$. 
Let $P \subseteq N_\r$ be a lattice polytope, and take $w \in M$. For $h \in \z$, write
$$H_{w,h} \coloneqq \{v \in N_\r \mid \langle w,v \rangle = h\}, \; \text{ and }\; P_{w,h} \coloneqq P \cap H_{w,h}.$$
We use the notation $w^\perp$ instead of $H_{w,0}$. Let $V(P) \subseteq N$ denote the set of vertices of $P$. 
For each subset $A \subseteq N_\r$, we set $A+\emptyset = \emptyset +A = \emptyset$. 

\begin{defi}[{\cite[Definition 5]{ACGK}}]\label{def:N_side}
Let $P \subseteq N_\r$ be a lattice polytope, $w \in M$ a primitive vector, and $F$ a lattice polytope which sits in $w^\perp$.
We say that the \textit{combinatorial mutation} ${\rm mut}_w(P,F)$ of $P$ in $N_\r$ is \textit{well-defined} if for every negative integer $h$, there exists a possibly-empty lattice polytope $G_h \subseteq N_\r$ such that the inclusions
\begin{align}\label{eq:well-defined}
V(P) \cap H_{w,h} \subseteq G_h + |h|F \subseteq P_{w,h}
\end{align}
hold. 
In this case, the combinatorial mutation ${\rm mut}_w(P,F)$ of $P$ is a lattice polytope defined as follows:
\[{\rm mut}_w(P,F) \coloneqq {\rm conv}\left( \bigcup_{h \leq -1}G_h \cup \bigcup_{h \geq 0}(P_{w,h} + hF) \right) \subseteq N_\r.\] 
Note that $G_h$ and $P_{w,h}+hF$ are empty except for finitely many $h$'s. 
\end{defi}

It is proved in \cite[Proposition 1]{ACGK} that ${\rm mut}_w(P,F)$ is independent of the choice of $\{G_h\}_h$. 

\begin{rem}
In \cite{Hig}, the definition of combinatorial mutations in $N_\r$ has been extended to rational convex polytopes and unbounded polyhedra. See \cite[Section 2]{Hig} for more details. 
\end{rem}

Next, we introduce another operation, which is a piecewise-linear transformation on $M_\r$. 

\begin{defi}[{\cite[Section 3]{ACGK}; see also \cite[Definition 3.1]{Hig}}]\label{def:M_side}
Let $w \in M$ be a primitive vector, and take a lattice polytope $F$ which sits in $w^\perp$. We define a map $\varphi_{w,F} \colon M_\r \rightarrow M_\r$ by 
\[\varphi_{w,F}(u) \coloneqq u-u_{\rm min}w,\]
where $u_{\rm min} \coloneqq \min\{\langle u, v \rangle \mid v \in F\}$. 
We call the piecewise-linear map $\varphi_{w,F}$ a \textit{combinatorial mutation in $M_\r$}. 
\end{defi}

Indeed, the combinatorial mutation $\varphi_{w,F}$ in $M_\r$ is compatible with the one in $N_\r$ through the polar dual. More precisely, we see the following. 

\begin{prop}[{\cite{ACGK} and \cite[Proposition 3.2]{Hig}}]\label{prop:compatibility}
Let $P \subseteq N_\r$ be a lattice polytope containing the origin. Take a primitive vector $w \in M$, and fix a lattice polytope $F \subseteq w^\perp$. Assume that ${\rm mut}_w(P,F)$ is well-defined.
Then it holds that \[\varphi_{w,F}(P^\ast) = {\rm mut}_w(P,F)^\ast.\] 
\end{prop}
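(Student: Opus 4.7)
The plan is to establish the biconditional
\[u \in P^* \iff \varphi_{w,F}(u) \in {\rm mut}_w(P,F)^*\]
for every $u \in M_\r$, which yields the claim once we observe that $\varphi_{w,F}$ is a bijection of $M_\r$. Indeed, using $\langle w, f\rangle = 0$ for $f \in F$, a direct computation shows that $\varphi_{w,F}(u) = u - u_{\min}w$ has inverse $u' \mapsto u' + u'_{\min}w$, since $u'_{\min} = u_{\min}$ whenever $u' = \varphi_{w,F}(u)$. Moreover, compactness of $F$ forces the minimum $u_{\min}$ to be attained at some $f_0 \in F$, with $\langle u, f\rangle - u_{\min} \geq 0$ for all $f \in F$.

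The backbone of both implications is the single identity
\[\langle \varphi_{w,F}(u),\, v + h f\rangle \;=\; \langle u, v\rangle + h\bigl(\langle u, f\rangle - u_{\min}\bigr),\]
valid for any $v \in N_\r$ with $\langle w, v\rangle = h$ and any $f \in F$ (direct expansion). For the forward direction, assume $u \in P^*$. Since ${\rm mut}_w(P,F)$ is the convex hull of $\bigcup_{h \leq -1} G_h \cup \bigcup_{h \geq 0}(P_{w,h} + hF)$, it suffices to check $\langle \varphi_{w,F}(u), v'\rangle \geq -1$ on these generators. For $v' = v + hf$ with $h \geq 0$ and $v \in P_{w,h}$, the identity yields $\langle \varphi_{w,F}(u), v'\rangle \geq \langle u, v\rangle \geq -1$. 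For $v' \in G_h$ with $h \leq -1$, first note that $G_h \subseteq H_{w,h}$ (this is forced by $G_h + |h|F \subseteq P_{w,h}$ and $\langle w, f\rangle = 0$); picking the minimizer $f_0$, the defining inclusion places $v' + |h|f_0 \in P$, so $\langle u, v'\rangle + |h|u_{\min} \geq -1$, which is precisely $\langle \varphi_{w,F}(u), v'\rangle \geq -1$.

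For the reverse direction, assume $\varphi_{w,F}(u) \in {\rm mut}_w(P,F)^*$ and verify $\langle u, v\rangle \geq -1$ on each vertex $v \in V(P)$, setting $h \coloneqq \langle w, v\rangle \in \z$. If $h \geq 0$, picking the minimizer $f_0$ places $v + hf_0$ in $P_{w,h} + hF \subseteq {\rm mut}_w(P,F)$, and the identity collapses to $\langle u, v\rangle = \langle \varphi_{w,F}(u), v + hf_0\rangle \geq -1$. If $h \leq -1$, the well-definedness hypothesis writes $v = v_0 + |h|f$ with $v_0 \in G_h \subseteq {\rm mut}_w(P,F)$ and $f \in F$; combining the identity applied at $v_0$ with $\langle u, f\rangle - u_{\min} \geq 0$ yields $\langle u, v\rangle \geq -1$. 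The main technical point is the asymmetry between positive and negative layers in the definition of ${\rm mut}_w(P,F)$: on the $h \geq 0$ side the translation by $hF$ is built into the generators, whereas on the $h \leq -1$ side only $G_h$ appears and one must invoke the defining inclusion $V(P) \cap H_{w,h} \subseteq G_h + |h|F$ at exactly the right moment, pairing it with a minimizer of $\langle u, \cdot\rangle$ on $F$ to convert between layers of $P$ and generators of the mutation.
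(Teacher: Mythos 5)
Your proof is correct and is essentially the standard argument (the paper itself only cites \cite{ACGK} and \cite[Proposition~3.2]{Hig} and does not reprove the statement). You establish the biconditional $u \in P^\ast \iff \varphi_{w,F}(u) \in {\rm mut}_w(P,F)^\ast$ by testing on the appropriate generators, and the single identity $\langle \varphi_{w,F}(u), v + hf\rangle = \langle u, v\rangle + h(\langle u, f\rangle - u_{\min})$ for $\langle w,v\rangle = h$, $f \in F$ correctly drives both directions; the bijectivity of $\varphi_{w,F}$ (with inverse $\varphi_{-w,F}$) then converts the biconditional into the set equality. All the layer-by-layer case distinctions are handled correctly, including the deduction $G_h \subseteq H_{w,h}$ and the sufficiency of checking on vertices of $P$ (reverse direction) and on the generating set of ${\rm mut}_w(P,F)$ (forward direction).
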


\begin{prop}[{\cite[Proposition 3.4]{Hig}}]\label{p:compatibility_well-defined}
Fix a primitive vector $w \in M$ and a lattice polytope $F \subseteq w^\perp$. 
Let $Q \subseteq M_\r$ be a rational convex polytope containing the origin. 
Then ${\rm mut}_w(Q^*,F)$ is well-defined if and only if $\varphi_{w,F}(Q)$ is convex. 
\end{prop}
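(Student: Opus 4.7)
The plan is to leverage the polar duality between the piecewise-linear operation on $M_\r$ and the combinatorial mutation on $N_\r$ recorded in \cref{prop:compatibility}, used in both directions and extended from lattice polytopes to rational convex polyhedra as in \cite{Hig}.

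For the forward direction, assume ${\rm mut}_w(Q^*,F)$ is well-defined. Since $0 \in Q$, we have $0 \in Q^*$, and $Q^*$ is a rational convex polyhedron in $N_\r$. Applying the rational-polyhedral extension of \cref{prop:compatibility} to $P = Q^*$ yields
\[
\varphi_{w,F}\bigl((Q^*)^*\bigr) = {\rm mut}_w(Q^*,F)^*.
\]
The bipolar theorem gives $(Q^*)^* = Q$, so $\varphi_{w,F}(Q)$ is the polar dual of a rational convex polytope and is therefore convex.

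For the converse, assume $\widetilde{Q} \coloneqq \varphi_{w,F}(Q)$ is convex. Since $\varphi_{w,F}$ is continuous piecewise-linear with finitely many linear pieces (indexed by the vertices of $F$), the set $\widetilde{Q}$ is automatically a finite union of rational polytopes, and convexity upgrades it to a rational convex polytope containing $0$. The key step is the polar-duality computation exploiting $F \subseteq w^\perp$: for $v \in N_\r$ with $\langle w,v\rangle = h \leq 0$, the identity $\langle \varphi_{w,F}(u), v\rangle = \langle u,v\rangle - u_{\min}\langle w,v\rangle$ combined with $u_{\min} = \min_{f \in F}\langle u,f\rangle$ yields the equivalence
\[
v \in \widetilde{Q}^* \quad \Longleftrightarrow \quad v + |h|F \subseteq Q^*.
\]
Setting $G_h \coloneqq \widetilde{Q}^* \cap H_{w,h}$ for $h \leq -1$, the inclusion $G_h + |h|F \subseteq (Q^*)_{w,h}$ is immediate from the equivalence above; once the vertex containment $V(Q^*) \cap H_{w,h} \subseteq G_h + |h|F$ is verified, \eqref{eq:well-defined} holds and ${\rm mut}_w(Q^*,F)$ is well-defined with value $\widetilde{Q}^*$.

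The main obstacle is this vertex containment step. The plan is to exploit the fact that $\varphi_{w,F} \colon M_\r \to M_\r$ is a bijection with inverse $\varphi_{-w,F}$, which follows from a short check using $F \subseteq w^\perp$ showing $\varphi_{-w,F} \circ \varphi_{w,F} = {\rm id}$. A vertex $v$ of $Q^*$ at height $h < 0$ corresponds dually to the facet $\sigma_v = \{u \in Q \mid \langle u,v\rangle = -1\}$ of $Q$, and I would argue by contradiction: if no decomposition $v = g + |h|f$ with $g + |h|F \subseteq Q^*$ exists, then a supporting functional separating every translate $v + |h|F$ from $Q^*$ transports through the bijection $\varphi_{-w,F}$ to a hyperplane exhibiting a reflex angle of $\widetilde{Q}$ along the image $\varphi_{w,F}(\sigma_v)$, contradicting the convexity of $\widetilde{Q}$. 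Boundary cases where $0 \in \partial Q$ (so that $Q^*$ becomes unbounded) are absorbed by the rational-polyhedral extension of \cite{Hig} without essential change.
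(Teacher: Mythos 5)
The paper cites this proposition to \cite[Proposition 3.4]{Hig} without reproducing a proof, so there is no in-paper argument to compare against; I therefore evaluate your proposal on its own terms. Your forward direction is correct (once the polyhedral extension of \cref{prop:compatibility} from \cite{Hig} is granted), and your setup for the converse is the right one: for $h=\langle w,v\rangle\le 0$ the identity $\langle\varphi_{w,F}(u),v\rangle=\min_{f\in F}\langle u,v+|h|f\rangle$ does give the equivalence $v\in\varphi_{w,F}(Q)^\ast\Longleftrightarrow v+|h|F\subseteq Q^\ast$, and taking $G_h\coloneqq\varphi_{w,F}(Q)^\ast\cap H_{w,h}$ makes the second inclusion in \eqref{eq:well-defined} automatic.

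The vertex-containment step, however, is exactly where the content of the converse lies, and your by-contradiction sketch does not close it. If $v\notin G_h+|h|F$, then for each vertex $f$ of $F$ the translate $v+|h|(F-f)$ fails to lie in $Q^\ast$, but this yields a \emph{different} separating functional for each $f$; there is no reason a single ``supporting functional separating every translate $v+|h|F$ from $Q^\ast$'' should exist, and ``transports through $\varphi_{-w,F}$ to a hyperplane exhibiting a reflex angle'' is not an argument. A direct route that does work: let $\sigma_v=\{u\in Q\mid\langle u,v\rangle=-1\}$ be the facet of $Q$ dual to the vertex $v$ of $Q^\ast$ at height $h<0$. The chambers $M_\r^{(f)}\coloneqq\{u\in M_\r\mid\langle u,f\rangle=\min_{f'\in F}\langle u,f'\rangle\}$, $f\in V(F)$, cover $\sigma_v$, so some $\sigma_v\cap M_\r^{(f_0)}$ is full-dimensional in $\sigma_v$. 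A short computation using $F\subseteq w^\perp$, $\langle w,v\rangle=h<0$, and $\langle u,v\rangle=-1$ gives $\langle\varphi_{w,F}(u),v-|h|f_0\rangle=-1$ for all $u\in\sigma_v\cap M_\r^{(f_0)}$. Since $\varphi_{w,F}$ is a piecewise-linear self-homeomorphism of $M_\r$ with $\varphi_{w,F}(Q)$ convex and $0\in\varphi_{w,F}(Q)$, the hyperplane $\{\langle\cdot,v-|h|f_0\rangle=-1\}$ contains a full-dimensional piece of $\partial\varphi_{w,F}(Q)$ and is therefore supporting, so $v-|h|f_0\in\varphi_{w,F}(Q)^\ast\cap H_{w,h}=G_h$ and hence $v\in G_h+|h|F$. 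This is the missing argument; you should replace the contradiction sketch with it, and also treat the degenerate cases ($Q$ not full-dimensional, $0\in\partial Q$) explicitly rather than by appeal to \cite{Hig} alone.
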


\begin{ex}
Consider the lattice polygon 
\[
P={\rm conv}((1,1),(0,1),(-1,-1),(0,-1)) \subseteq N_\r \cong \r^2.
\] 
Let $w=(0,-1) \in M$, and $F={\rm conv}((0,0),(1,0)) \subseteq w^\perp$. 

By setting $G_{-1}=\{(0,1)\}$, we see that $V(P) \cap H_{w,-1} \subseteq G_{-1} + F = P_{w,-1}$. Hence ${\rm mut}_w(P,F)$ is well-defined and 
$${\rm mut}_w(P,F)={\rm conv}(G_{-1} \cup (P_{w,1} +F))={\rm conv}((0,1),(-1,-1),(1,-1)) \subseteq N_\r.$$ 
By taking the polar dual of this polytope, we obtain that $${\rm mut}_w(P,F)^\ast = {\rm conv}((0,1),(-2,-1),(2,-1)) \subseteq M_\r.$$ 

On the other hand, it holds that 
\[
P^\ast = {\rm conv}((0,-1),(2,-1),(0,1),(-2,1)) \subseteq M_\r.\] 
Now, we apply $\varphi_{w,F}$ to $P^\ast$. By definition, we have
\begin{align*}
\varphi_{w,F}((x,y))&=(x,y)-\min\{\langle (x,y), (0,0) \rangle, \langle (x,y), (1,0) \rangle\}(0,-1) \\ 
&= (x,y) - \min\{0,x\}(0,-1) \\
&=\begin{cases}
(x,y) \ &\text{ if }x \geq 0, \\
(x,x+y) &\text{ if }x \leq 0,
\end{cases}
\end{align*}
which implies that
\begin{align*}
\varphi_{w,F}(P^\ast \cap \{(x,y) \in \r^2 \mid x \geq 0\}) &= {\rm conv}((0,-1),(2,-1),(0,1)),\text{ and }\\
\varphi_{w,F}(P^\ast \cap \{(x,y) \in \r^2 \mid x \leq 0\}) &= {\rm conv}((0,-1),(-2,-1),(0,1)). 
\end{align*}
Hence it follows that
\[
\varphi_{w,F}(P^\ast)={\rm conv}((0,1),(-2,-1),(2,-1)).
\]

Therefore, we see that ${\rm mut}_w(P,F)^\ast = \varphi_{w,F}(P^\ast)$ as in \cref{prop:compatibility}; see also Figure \ref{f:comb_mut_example}.
\begin{figure}[!ht]
\begin{center}
   \includegraphics[width=13.0cm,bb=60mm 150mm 150mm 220mm,clip]{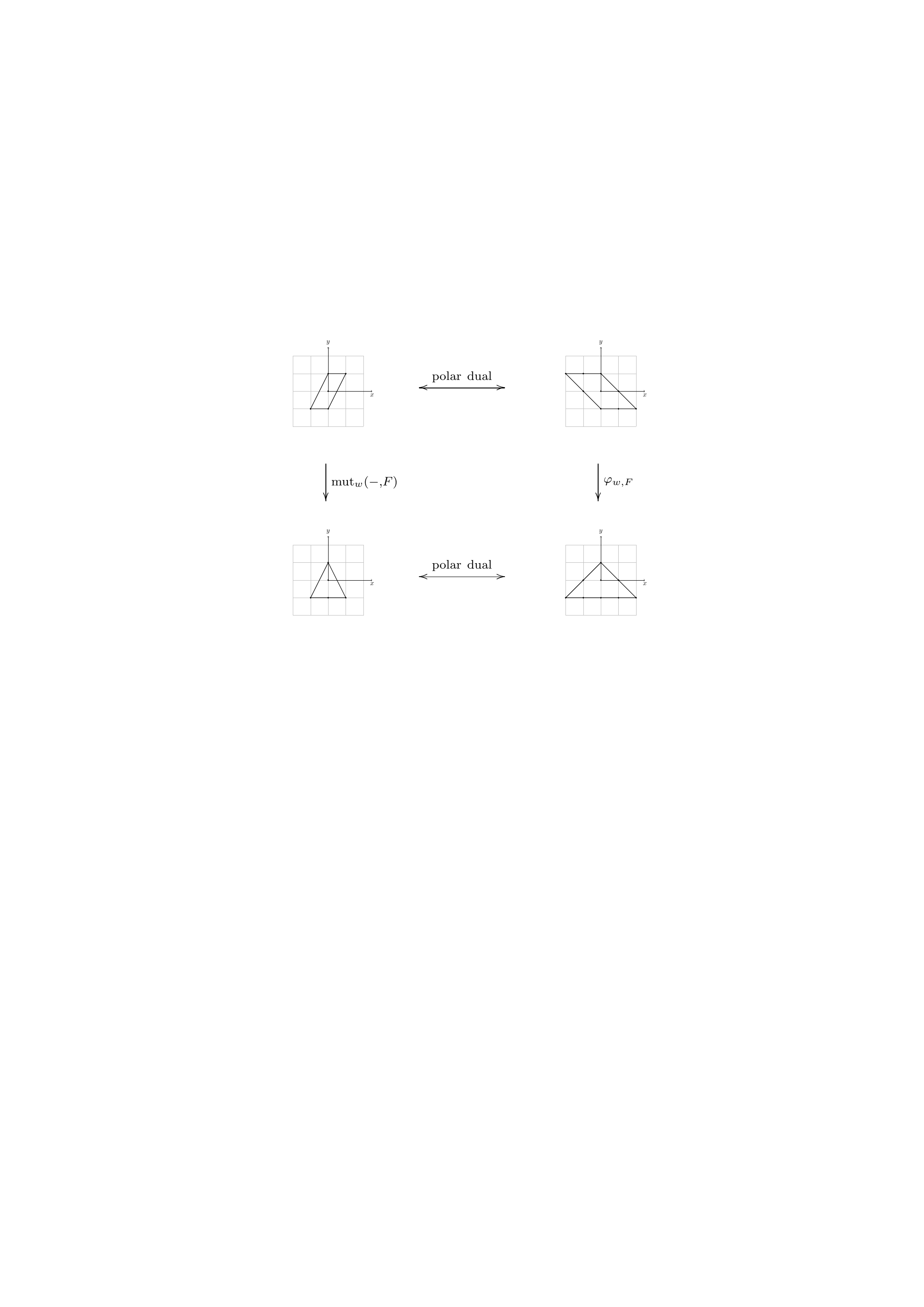}
	\caption{We can see that ${\rm mut}_w(P,F)^\ast = \varphi_{w,F}(P^\ast)$.} 
   \label{f:comb_mut_example}
\end{center}
\end{figure}
\end{ex}

We now introduce the notion of combinatorial mutation equivalence. 

\begin{defi}[{see \cite[Definition 3.5]{Hig}}]
Two lattice polytopes $P$ and $P'$ in $N_\r$ are said to be \textit{combinatorially mutation equivalent in $N_\r$} if there exists a sequence $((w_1,F_1),\ldots,(w_\ell,F_\ell))$, where $w_i \in M$ is primitive and $F_i \subseteq w_i^\perp$ is a lattice polytope, 
such that \[P'={\rm mut}_{w_\ell}((\cdots {\rm mut}_{w_2}({\rm mut}_{w_1}(P,F_1),F_2) \cdots), F_\ell).\] 

Similarly, two rational convex polytopes $Q$ and $Q'$ in $M_\r$ are said to be \textit{combinatorially mutation equivalent in $M_\r$} if there exists a sequence $((w_1,F_1),\ldots,(w_\ell,F_\ell))$, where $w_i \in M$ is primitive and $F_i \subseteq w_i^\perp$ is a lattice polytope, 
such that \[Q'=\varphi_{w_\ell,F_\ell}(\cdots (\varphi_{w_1,F_1}(Q)) \cdots )\]
and the image of each of the intermediate steps is always a rational convex polytope. 
\end{defi}

\subsection{Tropicalized cluster mutations as combinatorial mutations}

In this subsection, we realize the tropicalized cluster mutation $\mu_k ^T$ using the map $\varphi_{w,F}$ in Definition~\ref{def:M_side}.

\begin{prop}\label{p:tropicalized_muation_as_combinatorial}
For $k \in J_{\rm uf}$, the tropicalized cluster mutation $\mu_k ^T \colon \r^J \rightarrow \r^J$ can be described as a composition of a combinatorial mutation in $M_\r$ and $f \in {\it GL}_{J}(\z)$. 
\end{prop}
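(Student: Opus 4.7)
The plan is to exhibit $\mu_k^T$ as a composition $f \circ \varphi_{w,F}$, where $f \in {\it GL}_J(\z)$ is the linear map that coincides with $\mu_k^T$ on the half-space $\{g_k \geq 0\}$ and $\varphi_{w,F}$ is a single combinatorial mutation in $M_\r$ that is the identity on that half-space while supplying the correction needed on $\{g_k \leq 0\}$. As a first step I would rewrite $\mu_k^T$ in the symmetric form
\begin{align*}
(\mu_k^T(g))_j &= g_j + [\varepsilon_{k,j}^{(t)}]_+ [g_k]_+ - [-\varepsilon_{k,j}^{(t)}]_+ [-g_k]_+ \qquad (j \neq k),\\
(\mu_k^T(g))_k &= -g_k,
\end{align*}
which follows at once from the identities $g_k = [g_k]_+ - [-g_k]_+$ and $[-\varepsilon]_+ + \varepsilon = [\varepsilon]_+$. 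This makes manifest that $\mu_k^T$ is linear on each of the two half-spaces cut out by the sign of $g_k$.

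Next I would define $f$ by $f(g)_j \coloneqq g_j + [\varepsilon_{k,j}^{(t)}]_+ g_k$ for $j \neq k$ and $f(g)_k \coloneqq -g_k$. A short direct check gives $f \circ f = \mathrm{id}$, so $f$ is an involution and hence lies in ${\it GL}_J(\z)$; by the rewriting above it agrees with $\mu_k^T$ on $\{g_k \geq 0\}$. Now let $\tilde w \in M = \z^J$ be the vector with $\tilde w_j = \varepsilon_{k,j}^{(t)}$ for $j \neq k$ and $\tilde w_k = 0$. If $\tilde w = 0$ (the case in which every $\varepsilon_{k,j}^{(t)}$ with $j \neq k$ vanishes), then $\mu_k^T$ itself equals $f$ and there is nothing more to do. Otherwise, write $\tilde w = \lambda w$ with $\lambda \in \z_{>0}$ and $w \in M$ primitive, and set $F \coloneqq {\rm conv}(0, \lambda e_k) \subseteq N_\r$, where $e_k$ is the $k$-th standard basis vector of $N = \z^J$. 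Since $w_k = 0$, we have $F \subseteq w^\perp$, and $F$ is clearly a lattice polytope.

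It then remains to compute $\varphi_{w,F}(g) = g - \min(0, \lambda g_k)\,w$ and to check that $f \circ \varphi_{w,F} = \mu_k^T$ on both half-spaces. On $\{g_k \geq 0\}$ one has $\varphi_{w,F}(g) = g$ and therefore $f(g) = \mu_k^T(g)$ by construction. On $\{g_k \leq 0\}$ one obtains $\varphi_{w,F}(g) = g - g_k\tilde w$ (using $\lambda w = \tilde w$), and a brief computation using $[\varepsilon]_+ - \varepsilon = [-\varepsilon]_+$ shows $f(g - g_k\tilde w)_j = g_j + [-\varepsilon_{k,j}^{(t)}]_+ g_k$ for $j \neq k$, which is exactly the expression for $\mu_k^T(g)_j$ on this piece; the $k$-th component is $-g_k$ in both expressions.

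The only real bookkeeping is ensuring simultaneously that $w$ is primitive and $F$ is a lattice polytope in $w^\perp$; this is cleanly resolved by absorbing the integer factor $\lambda$ into $F$, using the fact that $(w, F)$ and $(w/\lambda, \lambda F)$ define the same combinatorial mutation. Beyond this mild subtlety and the separate degenerate case $\tilde w = 0$, I do not anticipate any deeper obstacle: once one notices the symmetric rewriting above, the choice of $(w, F)$ and $f$ is essentially forced by the piecewise-linear structure of $\mu_k^T$.
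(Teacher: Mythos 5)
Your proof is correct and follows essentially the same line as the paper's, realizing $\mu_k^T$ as $f\circ\varphi_{w,F}$ with the same primitive $w$ (a rescaled $k$-th row of $\varepsilon_t$), only with the mirror-image choice: you make $\varphi_{w,F}$ the identity on $\{g_k\geq 0\}$ with $F=\mathrm{conv}(\mathbf 0,\lambda{\bm e}_k)$, while the paper uses $\{g_k\leq 0\}$ with $F=\mathrm{conv}(\mathbf 0,-c_k^{(t)}{\bm e}_k)$. Your extra remarks (the symmetric rewriting, $f$ being an involution, the degenerate case $\tilde w=0$) are all fine, though the last case cannot actually occur since $\varepsilon_t$ is assumed to have full rank.
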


\begin{proof}
Recall that $\mu_k^T \colon \r^J \rightarrow \r^J, \ (g_j)_{j \in J} \mapsto (g' _j)_{j \in J}$, is defined by 
\begin{align*}
g' _j \coloneqq
\begin{cases}
g_j + [-\varepsilon_{k, j} ^{(t)}]_+ g_k + \varepsilon_{k, j} ^{(t)} [g_k]_+ &(j \neq k),\\
-g_j &(j = k)
\end{cases}
\end{align*}
for $j \in J$, where $(\varepsilon_{i,j}^{(t)})_{i \in J_{\rm uf}, j \in J} \in {\rm Mat}_{J_{\rm uf} \times J}(\z)$ is a full rank matrix whose $J_{\rm uf} \times J_{\rm uf}$-submatrix $\varepsilon^{\circ}$ is skew-symmetrizable. For $i \in J_{\rm uf}$, let $\varepsilon_i^{(t)}$ denote the $i$-th row of the matrix $(\varepsilon_{i,j}^{(t)})_{i \in J_{\rm uf},j \in J}$. 

For $j \in J$, we write the $j$-th unit vector of $\r^J$ as ${\bm e}_j \in \r^J$. Define ${\bm u}_k = (u_{k, j})_j \in \z^J$ by 
\begin{align*}
u_{k, j} \coloneqq \begin{cases}
\min\{\varepsilon_{k,j}^{(t)},0\} &(j \neq k), \\
2 &(j=k). 
\end{cases}
\end{align*}
Let $f \colon \r^J \rightarrow \r^J$ be a linear map defined by the matrix $(f_{i,j})_{i,j \in J}$ whose $i$-th row is ${\bm e}_i$ if $i \neq k$ and ${\bm e}_k-{\bm u}_k$ if $i=k$, 
where $f$ acts on $g \in \r^J$ from the right, that is, we regard $g$ as a row vector. 
Then we notice that $f \in {\it GL}_{J}(\z)$. Let us write $w \coloneqq \frac{1}{c_k^{(t)}}\varepsilon_k^{(t)} \in \z^J$, 
where $c_k^{(t)}$ is the greatest common divisor of the absolute values of the nonzero entries of $\varepsilon_k^{(t)}$. We set $F \coloneqq {\rm conv}({\bf 0},-c_k^{(t)}{\bm e}_k)$, where ${\bf 0}$ denotes the origin of $\r^J$. 
Notice that $w$ is primitive, and $F \subseteq w^\perp$ since the $k$-th entry of $\varepsilon_k^{(t)}$ is $0$, which follows from the skew-symmetrizability of $\varepsilon^{\circ}$. 

Our goal is to show that $\mu_k^T=f \circ \varphi_{w,F}$ as maps. 
For $g = (g_j)_{j \in J} \in \r^J$, the direct computation shows that 
\begin{align*}
\varphi_{w,F}(g)=g-\min\{ \langle g, v \rangle \mid v \in F\}w 
=g-\min\{0, -g_k\}\varepsilon^{(t)}_k 
=\begin{cases} 
g &\text{if }g_k \leq 0, \\
g+g_k\varepsilon_k^{(t)} &\text{if }g_k \geq 0. 
\end{cases}
\end{align*}
Moreover, we see the following: 
\begin{align*}
f(g)&=g-g_k {\bm u}_k = (g_j')_{j \in J}, \text{ where }\\
g_j'&=\begin{cases}
g_j-g_k \min\{\varepsilon_{k,j}^{(t)},0\}=g_j+[-\varepsilon_{k,j}^{(t)}]_+g_k &(j \neq k), \\
g_j-2g_j=-g_j &(j=k), 
\end{cases}
\end{align*}
which coincides with $\mu_k^T(g)$ in the case $g_k \leq 0$. Similar to this, we obtain the following: 
\begin{align*}
f(g+g_k\varepsilon_k^{(t)})&=g+g_k\varepsilon_k^{(t)}-g_k {\bm u}_k=(g_j')_{j \in J}, \text{ where} \\
g_j'&=\begin{cases}
g_j+g_k\varepsilon_{k,j}^{(t)}-g_k\min\{\varepsilon_{k,j}^{(t)},0\}=g_j+[-\varepsilon_{k,j}^{(t)}]_+g_k+\varepsilon_{k,j}^{(t)} g_k &(j \neq k), \\
g_j+0-2g_j=-g_j &(j=k), 
\end{cases}
\end{align*}
which coincides with $\mu_k^T(g)$ in the case $g_k \geq 0$. This proves the proposition.
\end{proof}

\bigskip

\section{Case of flag and Schubert varieties}\label{s:flag_Schubert}

In this section, we restrict ourselves to the case of flag varieties and Schubert varieties. 
In Section \ref{ss:string_NZ}, we review fundamental properties of these varieties, and recall basic facts on their Newton--Okounkov bodies. 
In Section \ref{ss:Schubert_as_cluster}, we review results of \cite{FO2}, which connect string polytopes and Nakashima--Zelevinsky polytopes by tropicalized cluster mutations.
In Section \ref{ss:combinatorial_mutations_on_NO}, we study relations among Newton--Okounkov bodies of Schubert varieties arising from cluster structures from the viewpoint of combinatorial mutations.

\subsection{String polytopes and Nakashima--Zelevinsky polytopes}\label{ss:string_NZ}

Let $G$ be a connected, simply-connected semisimple algebraic group over $\mathbb{C}$, $\mathfrak{g}$ the Lie algebra of $G$, and $B$ a Borel subgroup of $G$. 
Then the quotient space $G/B$ is called the \emph{full flag variety}, which is a nonsingular projective variety. 
Choose a maximal torus $H \subseteq B$, and denote by $\mathfrak{h} \subseteq \mathfrak{g}$ the Lie algebra of $H$. 
Let $\mathfrak{h}^\ast \coloneqq {\rm Hom}_\mathbb{C} (\mathfrak{h}, \mathbb{C})$ be the dual space of $\mathfrak{h}$, $\langle \cdot, \cdot \rangle \colon \mathfrak{h}^\ast \times \mathfrak{h} \rightarrow \mathbb{C}$ the canonical pairing, $P \subseteq \mathfrak{h}^\ast$ the weight lattice for $\mathfrak{g}$, and $P_+ \subseteq P$ the set of dominant integral weights. 
For $\lambda \in P$, there uniquely exists a character $\tilde{\lambda} \colon H \rightarrow \c^\times$ of $H$ such that $d \tilde{\lambda} = \lambda$.
By composing this with the canonical projection $B \twoheadrightarrow H$, we obtain a group homomorphism $\tilde{\lambda} \colon B \rightarrow \c^\times$, which we also denote by $\tilde{\lambda}$.
We consider the Cartan matrix 
\[C(\mathfrak{g}) = (c_{i,j})_{i, j \in I} \coloneqq (\langle \alpha_j, h_i \rangle)_{i, j \in I}\] 
of $\mathfrak{g}$, where $\{\alpha_i \mid i \in I\} \subseteq P$ (resp., $\{h_i \mid i \in I\} \subseteq \mathfrak{h}$) denotes the set of simple roots (resp., simple coroots).
Let $e_i, f_i, h_i \in \mathfrak{g}$, $i \in I$, denote the Chevalley generators, $N_G(H)$ the normalizer of $H$ in $G$, and $W \coloneqq N_G(H)/H$ the Weyl group.
The group $W$ is generated by the set $\{s_i \mid i \in I\}$ of simple reflections. 
We call ${\bm i} = (i_1, \ldots, i_m) \in I^m$ a \emph{reduced word} for $w \in W$ if $w = s_{i_1} \cdots s_{i_m}$ and if $m$ is the minimum among such expressions of $w$. 
In this case, the length $m$ is called the \emph{length} of $w$, which is denoted by $\ell(w)$. Denote by $R(w)$ the set of reduced words for $w$.

\begin{defi}[{see, for instance, \cite[Section I\hspace{-.1em}I.13.3]{Jan} and \cite[Definition 7.1.13]{Kum}}]\normalfont
For $w \in W$, we define a closed subvariety $X(w)$ of $G/B$ to be the Zariski closure of $B \widetilde{w} B/B$ in $G/B$, where $\widetilde{w} \in N_G(H)$ is a lift for $w \in W = N_G(H)/H$. This variety $X(w)$ is called a \emph{Schubert variety}.
\end{defi} 

For $w \in W$, the Schubert variety $X(w)$ is a normal projective variety, and we have $\dim_\c (X(w)) = \ell(w)$ (see, for instance, \cite[Sects.~I\hspace{-.1em}I.13.3, I\hspace{-.1em}I.14.15]{Jan}). 
Denoting the longest element in $W$ by $w_0$, we see that the Schubert variety $X(w_0)$ coincides with $G/B$. 
For $\lambda \in P_+$, define a line bundle $\mathcal{L}_\lambda$ on $G/B$ by 
\[
\mathcal{L}_\lambda \coloneqq (G \times \mathbb{C})/B,
\] 
where the right $B$-action on $G \times \mathbb{C}$ is defined as follows: 
\[(g, c) \cdot b \coloneqq (g b, \tilde{\lambda}(b) c)\] 
for $g \in G$, $c \in \mathbb{C}$, and $b \in B$. 
Let us denote the restriction of $\mathcal{L}_\lambda$ to $X(w)$ by the same symbol $\mathcal{L}_\lambda$. 
By \cite[Proposition 1.4.1]{Bri}, the line bundle $\mathcal{L}_\lambda$ on $X(w)$ is generated by global sections. 
Let $\mathcal{O}(K_{G/B})$ denote the canonical bundle of $G/B$. By \cite[Proposition 2.2.7 (ii)]{Bri}, we have 
\[
\mathcal{O}(K_{G/B}) \simeq \mathcal{L}_{-2\rho},
\] 
where $\rho \in P_+$ denotes the half sum of the positive roots. For $\lambda \in P_+$, denote by $V(\lambda)$ the irreducible highest weight $G$-module over $\c$ with highest weight $\lambda$ and with highest weight vector $v_{\lambda}$. 
For $w \in W$ and $\lambda \in P_+$, we define a $B$-submodule $V_w(\lambda)$ of $V(\lambda)$, called a \emph{Demazure module}, by 
\[
V_w(\lambda) \coloneqq \sum_{b \in B} \mathbb{C} b \widetilde{w} v_{\lambda},
\] 
where $\widetilde{w} \in N_G(H)$ is a lift for $w$. 
By the Borel--Weil type theorem (see, for instance, \cite[Corollary 8.1.26]{Kum}), it follows that the space $H^0(G/B, \mathcal{L}_\lambda)$ (resp., $H^0(X(w), \mathcal{L}_\lambda)$) of global sections is a $G$-module (resp., a $B$-module) isomorphic to the dual module $V(\lambda)^\ast \coloneqq {\rm Hom}_\mathbb{C}(V(\lambda), \mathbb{C})$ (resp., $V_w (\lambda)^\ast \coloneqq {\rm Hom}_\mathbb{C}(V_w(\lambda), \mathbb{C})$). 
Let us fix a lowest weight vector $\tau_\lambda \in H^0(G/B, \mathcal{L}_\lambda)$, and consider its restriction in $H^0(X(w), \mathcal{L}_\lambda)$, which is also denoted by the same symbol $\tau_\lambda$. 
For ${\bm i} \in R(w)$ and $\lambda \in P_+$, we denote by $\Delta_{\bm i} (\lambda)$ (resp., $\widetilde{\Delta}_{\bm i} (\lambda)$) the corresponding string polytope (resp., the corresponding Nakashima--Zelevinsky polytope); see \cite[Section 1]{Lit}, \cite[Definition 2.15]{FN}, \cite[Definition 3.24]{FO1}, and \cite[Definition 3.9]{Fuj} for the precise definitions.
These polytopes are defined from \emph{Kashiwara's crystal basis} which is a combinatorial skeleton of $V(\lambda)$; see \cite{Kas} for a survey on crystal bases.
Those who are not familiar with crystal bases may regard \cref{t:NO_body_crystal_lowest_term_valuations} below as their definitions.

\begin{rem}
The definition of Nakashima--Zelevinsky polytopes in \cite[Definition 3.9]{Fuj} is slightly different from the one in \cite[Definition 3.24 (2)]{FO1}; they coincide after reversing the order of coordinates. In the present paper, let us use the definition in \cite[Definition 3.9]{Fuj}.
\end{rem}

Kaveh \cite{Kav} proved that the string polytope $\Delta_{\bm i}(\lambda)$ is identical to the Newton--Okounkov body $\Delta(X(w), \mathcal{L}_\lambda, v_{\bm i} ^{\rm high}, \tau_\lambda)$ of $(X(w), \mathcal{L}_\lambda)$ associated with a highest term valuation $v_{\bm i} ^{\rm high}$. 
Using a different kind of highest term valuation $\tilde{v}_{\bm i} ^{\rm high}$, the first named author and Naito \cite{FN} showed that the Nakashima--Zelevinsky polytope $\widetilde{\Delta}_{\bm i}(\lambda)$ can be realized as a Newton--Okounkov body $\Delta(X(w), \mathcal{L}_\lambda, \tilde{v}_{\bm i} ^{\rm high}, \tau_\lambda)$. 
Afterward, the first named author and Oya \cite{FO1} gave different realizations of these polytopes as Newton--Okounkov bodies, which we review in the following.
For each ${\bm i} = (i_1, \ldots, i_m) \in R(w)$, we obtain a birational morphism 
\[\c^m \rightarrow X(w),\ (t_1, \ldots, t_m) \mapsto \exp(t_1 f_{i_1}) \cdots \exp(t_m f_{i_m}) \bmod B,\] 
by \cite[Chapter I\hspace{-.1em}I.13]{Jan}. 
Then the function field $\mathbb{C}(X(w))$ of $X(w)$ is identified with the field $\c(t_1, \ldots, t_m)$ of rational functions in $t_1, \ldots, t_m$. 
We define two total orders $\leq$ and $\preceq$ on $\z^m$ as follows: 
\begin{align*}
(a_1, \ldots, a_m) < (a_1 ^\prime, \ldots, a_m ^\prime) & & ({\rm resp.},\ (a_1, \ldots, a_m) \prec (a_1 ^\prime, \ldots, a_m ^\prime))
\end{align*}
if and only if there exists $1 \leq k \leq m$ such that 
\begin{align*}
a_1 = a_1 ^\prime, \ldots, a_{k-1} = a_{k-1} ^\prime,\ a_k < a_k ^\prime & & ({\rm resp.},\ a_m = a_m ^\prime, \ldots, a_{k+1} = a_{k+1} ^\prime,\ a_k < a_k ^\prime).
\end{align*}
Then let $v_{\bm i} ^{\rm low}$ and $\tilde{v}_{\bm i} ^{\rm low}$ denote the valuations on $\c(X(w)) \simeq \c(t_1, \ldots, t_m)$ defined to be $v^{\rm low} _{\leq}$ and $v^{\rm low} _{\preceq}$ on $\c(t_1, \ldots, t_m)$, respectively (see \cref{ex:lowest_term_valuation}). 

\begin{thm}[{see \cite[Propositions 3.28, 3.29 and Corollary 5.3]{FO1}}]\label{t:NO_body_crystal_lowest_term_valuations}
For $w \in W$, $\lambda \in P_+$, and ${\bm i} \in R(w)$, the following hold.
\begin{enumerate}
\item[{\rm (1)}] The Newton--Okounkov body $\Delta(X(w), \mathcal{L}_\lambda, \tilde{v}_{\bm i} ^{\rm low}, \tau_\lambda)$ coincides with the string polytope $\Delta_{\bm i} (\lambda)$.
\item[{\rm (2)}] The Newton--Okounkov body $\Delta(X(w), \mathcal{L}_\lambda, v_{\bm i} ^{\rm low}, \tau_\lambda)$ coincides with the Nakashima--Zelevinsky polytope $\widetilde{\Delta}_{\bm i} (\lambda)$.
\end{enumerate}
\end{thm}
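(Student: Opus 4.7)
The plan is to evaluate the two lowest term valuations on an explicit basis of sections indexed by Kashiwara's crystal basis, and to match the resulting lattice points with the string and Nakashima--Zelevinsky parametrizations, respectively.

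First, I would use the Borel--Weil identification $H^0(X(w),\mathcal{L}_\lambda) \cong V_w(\lambda)^*$ together with an upper global (dual canonical) basis of $V_w(\lambda)$ to obtain a basis $\{\sigma_b\}_{b \in \mathcal{B}_w(\lambda)}$ of $H^0(X(w),\mathcal{L}_\lambda)$ indexed by the Demazure crystal $\mathcal{B}_w(\lambda)$. Since $\tau_\lambda$ is a lowest weight vector, it is nowhere vanishing on the image of the open cell $\c^m \hookrightarrow X(w)$ determined by $\bm i$, so each ratio $\sigma_b/\tau_\lambda$ pulls back to a polynomial $F_b(t_1,\ldots,t_m) \in \c[t_1,\ldots,t_m]$, obtained as the coefficient of $b$ in the expansion of $\exp(t_1 f_{i_1})\cdots\exp(t_m f_{i_m}) v_\lambda \in V(\lambda)$ with respect to the upper global basis.

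The core step is the identification of extremal monomials of $F_b$ with crystal parameters: the $\preceq$-lowest exponent of $F_b$ should equal the $\bm i$-string parameter of $b$, and the $\leq$-lowest exponent of $F_b$ should equal the $\bm i$-Nakashima--Zelevinsky parameter of $b$. Both assertions are amenable to induction on $m = \ell(w)$. For part (1), one peels off $\exp(t_m f_{i_m})$ from the right; since $\preceq$ compares the rightmost coordinate first, minimising the exponent of $t_m$ reproduces the recursion of maximal Kashiwara operator applications that defines the string parameter. For part (2), one peels off $\exp(t_1 f_{i_1})$ from the left and uses the parallel Lusztig-type recursion that defines the Nakashima--Zelevinsky parameter, matching the left-to-right ordering of $\leq$. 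Once this pointwise identification is in place, the conversion to equalities of Newton--Okounkov bodies is routine: since $\{\sigma_b \mid b \in \mathcal{B}_w(k\lambda)\}$ is a basis of $H^0(X(w),\mathcal{L}_\lambda^{\otimes k})$ for every $k \geq 1$ and the relevant valuation separates these basis elements, the semigroup $S(X(w),\mathcal{L}_\lambda,v,\tau_\lambda)$ has as its image (at level $k$) exactly the rescaled parameters of $\mathcal{B}_w(k\lambda)$, and $\Delta_{\bm i}(\lambda)$ (resp.\ $\widetilde\Delta_{\bm i}(\lambda)$) is by definition the closure of the union over $k$ of these rescaled parameters.

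The main obstacle I expect is the pointwise matching step, since the non-commutativity of the exponential factorization interacts subtly with the commutative monomial orders, and the two Chevalley-like factors $\exp(t_k f_{i_k})$ permute the upper global basis only up to higher-order combinatorics. A promising shortcut is to bootstrap from Kaveh's realization of $\Delta_{\bm i}(\lambda)$ via the highest term valuation $v_{\bm i}^{\rm high}$ and Fujita--Naito's realization of $\widetilde\Delta_{\bm i}(\lambda)$ via $\tilde v_{\bm i}^{\rm high}$: the lowest monomial with respect to $\preceq$ (resp.\ $\leq$) is the highest monomial with respect to the reversed lex order, so it suffices to produce a birational equivalence of $X(w)$---for instance coming from the reverse reduced word together with a Weyl group element, or from Lusztig's involution on crystals---that intertwines the two parametrizations $\exp(t_1 f_{i_1})\cdots\exp(t_m f_{i_m})$ and $\exp(t_m f_{i_m})\cdots\exp(t_1 f_{i_1})$ while exchanging $\leq$ and $\preceq$. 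The lowest-term statements would then follow formally from the highest-term results of Kaveh and Fujita--Naito.
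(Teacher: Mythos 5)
The paper does not actually prove this statement---it is imported verbatim as a citation to \cite[Propositions 3.28, 3.29 and Corollary 5.3]{FO1}, so there is no in-text proof to compare your proposal against. Measured against what such a proof must contain, your outline is structurally sound and close to the likely shape of the argument in \cite{FO1}: identify $H^0(X(w),\mathcal{L}_\lambda)$ with $V_w(\lambda)^\ast$, work with the dual canonical (upper global) basis $\{\sigma_b\}_{b\in\mathcal{B}_w(\lambda)}$, note that $\tau_\lambda$ is the constant function $1$ on the cell $\mathbb{C}^m\to X(w)$ so each $\sigma_b/\tau_\lambda$ pulls back to a polynomial, evaluate the two lowest-term valuations on these polynomials, match them with the string and Nakashima--Zelevinsky parametrizations, and then pass to semigroups using $1$-dimensional leaves and Littelmann's description of the lattice points. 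All of that is correct in outline, and your decision to reduce the problem to a pointwise matching identifies exactly where the mathematical content lives.

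The gap is that you leave that pointwise matching---which \emph{is} the theorem---unproved, and the inductive sketch you offer glides over a genuine difficulty. For $\tilde{v}_{\bm i}^{\rm low}=v^{\rm low}_{\preceq}$ the order $\preceq$ minimizes the exponent of $t_m$ first, while the string datum of $b$ for ${\bm i}=(i_1,\dots,i_m)$ is produced by the opposite recursion, peeling off $\tilde e_{i_1}$ first ($a_1=\varepsilon_{i_1}(b)$, $a_2=\varepsilon_{i_2}(\tilde e_{i_1}^{a_1}b)$, etc.). Reconciling this requires showing that, among all monomial expansions $b=\tilde f_{i_1}^{c_1}\cdots\tilde f_{i_m}^{c_m}b_\lambda$ contributing to $F_b$, the $\preceq$-minimal exponent vector coincides with the string datum; this is a nontrivial statement about crystal bases and their pairing with the upper global basis, not a purely formal consequence of ``peeling off a factor.'' An analogous issue arises for the Nakashima--Zelevinsky case. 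Your proposed shortcut via Kaveh's and Fujita--Naito's highest-term realizations is also more costly than you let on: their valuations are defined relative to a differently ordered (and, in Kaveh's case, a genuinely different) birational parametrization of the cell, and the observation that ``lowest for one lex order is highest for the reversed one'' does not by itself transport a valuation across a nonlinear change of coordinates. Producing the birational equivalence you ask for---and verifying that it is lower/upper triangular in the right sense for the two lex orders---is precisely one of the main comparison results of \cite{FO1}, so the ``shortcut'' is essentially the theorem again. Nothing in your plan is wrong, but the decisive lemma is asserted rather than established.
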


\cref{t:NO_body_crystal_lowest_term_valuations} leads to the realization of $\Delta_{\bm i}(\lambda)$ (resp., $\widetilde{\Delta}_{\bm i}(\lambda)$) in \cite{FO2} as a Newton--Okounkov body arising from a cluster structure, which is reviewed in the next subsection.

In the context of mirror symmetry, when $G$ is of type $A_n$, Rusinko \cite[Theorem 7]{Rus} proved that the polar dual of the (properly translated) string polytope $\Delta_{\bm i} (2 \rho)$ is a lattice polytope for all ${\bm i} \in R(w_0)$. Using Hibi's criterion \cite{Hib} on the integrality of the vertices of the dual polytopes, Steinert \cite{Ste} generalized this result to all Lie types and all valuations $v \colon \mathbb{C}(G/B) \setminus \{0\} \rightarrow \z^{\dim_\c (G/B)}$ with $1$-dimensional leaves as follows.

\begin{thm}[{see \cite[Sects.\ 4, 6]{Ste}}]\label{t:unique_lattice_point_Steinert}
Take a valuation $v \colon \mathbb{C}(G/B) \setminus \{0\} \rightarrow \z^{\dim_\c (G/B)}$ with $1$-dimensional leaves, and fix a nonzero section $\tau \in H^0 (G/B, \mathcal{L}_{2 \rho})$. If the semigroup $S(G/B, \mathcal{L}_{2 \rho}, v, \tau_{2 \rho})$ is finitely generated and saturated, then the Newton--Okounkov body $\Delta(G/B, \mathcal{L}_{2 \rho}, v, \tau_{2 \rho})$ contains exactly one lattice point in its interior. In addition, the dual $\Delta(G/B, \mathcal{L}_{2 \rho}, v, \tau_{2 \rho})^\vee$ in the sense of Section \ref{s:intro} is a lattice polytope.
\end{thm}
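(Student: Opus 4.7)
The plan is to combine the toric degeneration produced by the semigroup hypothesis with the fact that $(G/B,\mathcal{L}_{2\rho})$ is a smooth anticanonically polarized Fano variety, and then extract reflexivity from Hibi's criterion.

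First, since $S \coloneqq S(G/B, \mathcal{L}_{2\rho}, v, \tau_{2\rho})$ is finitely generated and saturated, the graded semigroup algebra $\mathbb{C}[S]$ is normal, and by the standard Rees-algebra construction associated with a valuation (cf.\ \cite{And, HK}), one obtains a flat degeneration of the projective coordinate ring $\bigoplus_{k\ge 0} H^0(G/B,\mathcal{L}_{2\rho}^{\otimes k})$ to $\mathbb{C}[S]$. In particular, the degenerate toric variety is the normal projective toric variety $X_\Delta$ associated with the rational convex polytope $\Delta = \Delta(G/B,\mathcal{L}_{2\rho},v,\tau_{2\rho})$, and $\Delta$ is a (full-dimensional) normal lattice polytope because $\mathcal{L}_{2\rho}$ is ample. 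Flatness of the degeneration gives the identification of Hilbert series
\[
\sum_{k\ge 0} \dim_{\mathbb{C}} H^0(G/B,\mathcal{L}_{2\rho}^{\otimes k})\, t^k \;=\; \sum_{k\ge 0} |kS_1|\, t^k \;=\; \mathrm{Ehr}_\Delta(t),
\]
where $S_1 \coloneqq S\cap(\{1\}\times\mathbb{Z}^{\dim G/B})$. Hence the Ehrhart polynomial of $\Delta$ equals $k\mapsto \dim_{\mathbb{C}} H^0(G/B,\mathcal{L}_{2k\rho})$.

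Next, I would exploit that $\mathcal{L}_{2\rho}$ is the anticanonical bundle of $G/B$ (recall $\mathcal{O}(K_{G/B})\simeq \mathcal{L}_{-2\rho}$, as noted in \S\ref{ss:string_NZ}), so that $G/B$ is a smooth Fano variety with this polarization. By Kodaira vanishing and Serre duality, the characters
\[
\chi(G/B,\mathcal{L}_{2k\rho}) = \dim_{\mathbb{C}} H^0(G/B,\mathcal{L}_{2k\rho})
\]
satisfy the reciprocity
\[
\chi(G/B,\mathcal{L}_{2k\rho}) = (-1)^{\dim G/B}\,\chi(G/B,\mathcal{L}_{-2(k+1)\rho})
\]
(equivalently, the Weyl dimension formula applied to $(k+1)\cdot 2\rho - 2\rho$ exhibits this palindromic symmetry). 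Translating this symmetry of the Ehrhart polynomial of $\Delta$ into the $h^*$-vector, one obtains that $h^*(\Delta,t)$ is a palindromic polynomial of degree $\dim G/B$.

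Finally, Hibi's criterion \cite{Hib} characterizes normal (equivalently, integrally closed) lattice polytopes whose $h^*$-polynomial is palindromic: such a polytope $\Delta$ is reflexive up to translation by an integer vector, meaning $\Delta$ has a unique interior lattice point $\bm a$ and $(\Delta-\bm a)^\ast = \Delta^\vee$ is a lattice polytope. This yields both conclusions of the theorem. The main technical step is matching the cohomological symmetry on $G/B$ with the Ehrhart-theoretic palindromicity on $\Delta$; once that is in place, Hibi's theorem closes the argument, and the normality of $\Delta$ (guaranteed by saturation of $S$) is what allows one to invoke Hibi directly rather than working with general rational polytopes.
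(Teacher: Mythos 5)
Your route --- Ehrhart polynomial of $\Delta$ from the finitely generated saturated semigroup, Serre duality with $K_{G/B} \simeq \mathcal{L}_{-2\rho}$ to force palindromic symmetry, then Hibi's criterion --- is the same one the paper attributes to Steinert, so the skeleton is right. There is, however, a wrong step in the middle: you claim that $\Delta = \Delta(G/B, \mathcal{L}_{2\rho}, v, \tau_{2\rho})$ is a \emph{lattice} polytope ``because $\mathcal{L}_{2\rho}$ is ample,'' and you then state Hibi's criterion in its lattice-polytope form. Ampleness gives full dimension but says nothing about lattice vertices; saturation of $S$ gives a rational pointed cone whose height-one slice $\Delta$ is, in general, only a rational polytope. (The theorem's conclusion accordingly asserts that the dual $\Delta^\vee$ is a lattice polytope, not that $\Delta$ is.) The argument as written would fail at exactly this point when $\Delta$ has non-integral vertices.

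The repair is to use Hibi's theorem \cite{Hib} in its original form as a statement about \emph{rational} polytopes: if $P$ is a $d$-dimensional rational convex polytope with the origin in its interior, then $P^\ast$ is a lattice polytope if and only if $i(P,k) = (-1)^d\, i(P, -k-1)$ for all integers $k$. In the present situation, saturation of $S$ together with $1$-dimensionality of the leaves of $v$ give $i(\Delta,k) = \dim_\c H^0(G/B, \mathcal{L}_{2k\rho}) = (2k+1)^{\dim_\c(G/B)}$ for $k \geq 0$, a genuine polynomial with the required symmetry; evaluating at $k=0$ and using Ehrhart reciprocity then shows $\Delta$ has a unique interior lattice point, and Hibi's rational-polytope theorem gives integrality of $\Delta^\vee$. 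Your closing remark about ``normality (equivalently, integral closedness) of $\Delta$'' is neither correct nor what the argument needs: what saturation actually buys you is the identification of the Ehrhart counting function of $\Delta$ with the Hilbert function of $(G/B, \mathcal{L}_{2\rho})$, not latticeness of $\Delta$.
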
 

In \cref{t:combinatorial_mutations_NO} (2), we study combinatorial mutations on the lattice polytope $\Delta(G/B, \mathcal{L}_{2 \rho}, v, \tau_{2 \rho})^\vee$ when $v$ comes from the cluster structure.

\begin{rem}
In the paper \cite{Ste}, the algebraic group $G$ is assumed to be simple. 
However, the proof of \cref{t:unique_lattice_point_Steinert} can also be applied to the case that $G$ is semisimple.
When $G$ is simple, it is proved in [47] that $\Delta(G/B, \mathcal{L}_\lambda, v, \tau_\lambda)$ contains exactly one lattice point in its interior only if $\lambda = 2 \rho$.
\end{rem}

\begin{ex}\label{ex:unique_lattice_point}
Assume that $G$ is simple and simply-laced. We identify the set $I$ of vertices of the Dynkin diagram with $\{1, 2, \ldots, n\}$ as follows:
\begin{align*}
&A_n\ \begin{xy}
\ar@{-} (50,0) *++!D{1} *\cir<3pt>{};
(60,0) *++!D{2} *\cir<3pt>{}="C"
\ar@{-} "C";(65,0) \ar@{.} (65,0);(70,0)^*!U{}
\ar@{-} (70,0);(75,0) *++!D{n-1} *\cir<3pt>{}="D"
\ar@{-} "D";(85,0) *++!D{n} *\cir<3pt>{}="E"
\end{xy}\hspace{-1mm},&D_n\ \begin{xy}
\ar@{-} (60,0) *++!D{3} *\cir<3pt>{}="C";
(65,0)
\ar@{.} (65,0);(70,0)^*!U{}
\ar@{-} (70,0);(75,0) *++!D{n-1} *\cir<3pt>{}="D"
\ar@{-} "D";(85,0) *++!D{n} *\cir<3pt>{}
\ar@{-} "C";(50,4) *++!R{1} *\cir<3pt>{}
\ar@{-} "C";(50,-4) *++!R{2} *\cir<3pt>{},
\end{xy}\hspace{-1mm},\\ 
&E_6\ \begin{xy}
\ar@{-} (50,0) *++!U{5} *\cir<3pt>{};
(60,0) *++!U{4} *\cir<3pt>{}="C"
\ar@{-} "C";(70,0) *++!U{3} *\cir<3pt>{}="D"
\ar@{-} "D";(80,0) *++!U{2} *\cir<3pt>{}="E"
\ar@{-} "E";(90,0) *++!U{6} *\cir<3pt>{}="F"
\ar@{-} "D";(70,10) *++!D{1} *\cir<3pt>{}="G"
\end{xy}\hspace{-1mm},&E_7\ \begin{xy}
\ar@{-} (50,0) *++!U{5} *\cir<3pt>{};
(60,0) *++!U{4} *\cir<3pt>{}="C"
\ar@{-} "C";(70,0) *++!U{3} *\cir<3pt>{}="D"
\ar@{-} "D";(80,0) *++!U{2} *\cir<3pt>{}="E"
\ar@{-} "E";(90,0) *++!U{6} *\cir<3pt>{}="F"
\ar@{-} "D";(70,10) *++!D{1} *\cir<3pt>{}="G"
\ar@{-} "F";(100,0) *++!U{7} *\cir<3pt>{}="H"
\end{xy}\hspace{-1mm},\\ 
&E_8\ \begin{xy}
\ar@{-} (50,0) *++!U{5} *\cir<3pt>{};
(60,0) *++!U{4} *\cir<3pt>{}="C"
\ar@{-} "C";(70,0) *++!U{3} *\cir<3pt>{}="D"
\ar@{-} "D";(80,0) *++!U{2} *\cir<3pt>{}="E"
\ar@{-} "E";(90,0) *++!U{6} *\cir<3pt>{}="F"
\ar@{-} "D";(70,10) *++!D{1} *\cir<3pt>{}="G"
\ar@{-} "F";(100,0) *++!U{7} *\cir<3pt>{}="H"
\ar@{-} "H";(110,0) *++!U{8} *\cir<3pt>{}="I"
\end{xy}. &
\end{align*}
Let $X_n$ be the Lie type of $G$, and define ${\bm i}_{X_n} \in R(w_0)$ as follows. 
\begin{itemize}
\item If $G$ is of type $A_n$, then
\[{\bm i}_{A_n} \coloneqq (1, 2, 1, 3, 2, 1, \ldots, n, n-1, \ldots, 1) \in I^{\frac{n(n+1)}{2}}.\]
\item If $G$ is of type $D_n$, then
\[{\bm i}_{D_n} \coloneqq (1, 2, \underbrace{3, 1, 2, 3}_4, \underbrace{4, 3, 1, 2, 3, 4}_6, \ldots, \underbrace{n, n-1, \ldots, 3, 1, 2, 3, \ldots, n-1, n}_{2n-2}) \in I^{n (n-1)}.\] 
\item If $G$ is of type $E_6$, then
\[{\bm i}_{E_6} \coloneqq ({\bm i}_{D_5}, 6, 2, 3, 1, 4, 5, 3, 4, 2, 3, 1, 6, 2, 3, 4, 5) \in I^{36}.\] 
\item If $G$ is of type $E_7$, then
\[{\bm i}_{E_7} \coloneqq ({\bm i}_{E_6}, 7, 6, 2, 3, 1, 4, 5, 3, 4, 2, 3, 1, 6, 2, 3, 4, 5, 7, 6, 2, 3, 1, 4, 3, 2, 6, 7) \in I^{63}.\] 
\item If $G$ is of type $E_8$, then
\begin{align*}
{\bm i}_{E_8} \coloneqq ({\bm i}_{E_7}, 8&, 7, 6, 2, 3, 1, 4, 5, 3, 4, 2, 3, 1, 6, 2, 3, 4, 5, 7,\\ 
6&, 2, 3, 1, 4, 3, 2, 6, 7, 8, 7, 6, 2, 3, 1, 4, 5, 3, 4,\\ 
2&, 3, 1, 6, 2, 3, 4, 5, 7, 6, 2, 3, 1, 4, 3, 2, 6, 7, 8) \in I^{120}.
\end{align*}
\end{itemize}
Littelmann \cite[Section 1, Corollaries 4, 8, and Theorems 8.1, 8.2, 9.3]{Lit} gave a system of explicit affine inequalities defining the string polytopes $\Delta_{\bm i} (\lambda)$ associated with the reduced words ${\bm i}$ above and $\lambda \in P_+$. 
When $\lambda = 2 \rho$, let ${\bm a}_{X_n}$ denote the unique lattice point of $\Delta_{{\bm i}_{X_n}} (2 \rho)$. Then we see the following by Littelmann's description.
\begin{itemize}
\item If $G$ is of type $A_n$, then
\[{\bm a}_{A_n} = (1, 2, 1, 3, 2, 1, \ldots, n, n-1, \ldots, 1) \in \z^{\frac{n(n+1)}{2}}.\]
\item If $G$ is of type $D_n$, then
\[{\bm a}_{D_n} = (1, 1, \underbrace{3, 2, 2, 1}_4, \underbrace{5, 4, 3, 3, 2, 1}_6, \ldots, \underbrace{2n -3, 2n-4, \ldots, n, n-1, n-1, n-2, \ldots, 2, 1}_{2n-2}) \in \z^{n (n-1)}.\] 
\item If $G$ is of type $E_6$, then
\[{\bm a}_{E_6} = ({\bm a}_{D_5}, 11, 10, 9, 8, 8, 7, 7, 6, 6, 5, 4, 5, 4, 3, 2, 1) \in \z^{36}.\] 
\item If $G$ is of type $E_7$, then
\[{\bm a}_{E_7} = ({\bm a}_{E_6}, 17, 16, 15, 14, 13, 13, 12, 12, 11, 11, 10, 9, 10, 9, 8, 7, 6, 9, 8, 7, 6, 5, 5, 4, 3, 2, 1) \in \z^{63}.\] 
\item If $G$ is of type $E_8$, then
\begin{align*}
{\bm a}_{E_8} = ({\bm a}_{E_7}, 28&, 27, 26, 25, 24, 23, 23, 22, 22, 21, 21, 20, 19, 20, 19, 18, 17, 16, 19,\\ 
18&, 17, 16, 15, 15, 14, 13, 12, 11, 29, 18, 17, 16, 15, 14, 14, 13, 13, 12,\\ 
12&, 11, 10, 11, 10, 9, 8, 7, 10, 9, 8, 7, 6, 6, 5, 4, 3, 2, 1) \in \z^{120}.
\end{align*}
\end{itemize}
\end{ex}

\begin{ex}[{see \cite[Corollary 5]{Lit} and \cite[Theorem 6.1]{Nak}}]\label{ex:GT_polytopes_type_A}
Let $G = SL_{n+1} (\c)$, and $\lambda \in P_+$. We consider the reduced word ${\bm i}_{A_n} \in R(w_0)$ in \cref{ex:unique_lattice_point}. Then the string polytope $\Delta_{{\bm i}_{A_n}}(\lambda)$ and the Nakashima--Zelevinsky polytope $\widetilde{\Delta}_{{\bm i}_{A_n}}(\lambda)$ are both unimodularly equivalent to the Gelfand--Tsetlin polytope $GT(\lambda)$ which is defined to be the set of 
\[
(a_1 ^{(1)}, a_1 ^{(2)}, a_2 ^{(1)}, a_1 ^{(3)}, a_2 ^{(2)}, a_3 ^{(1)}, \ldots, a_1 ^{(n)}, \ldots, a_n ^{(1)}) \in \r^{\frac{n(n +1)}{2}}
\] 
satisfying the following conditions:
\[\begin{matrix}
\lambda_{\geq 1} & & \lambda_{\geq 2} & & \cdots & & & \lambda_{\geq n} & & 0\\
 & a_1 ^{(1)} & & a_2 ^{(1)} & & \cdots & & & a_n ^{(1)} & \\
 & & a_1 ^{(2)} & & \cdots & & & a_{n -1} ^{(2)} & & \\
 & & & \ddots & & \ldots & & & & \\
 & & & & a_1 ^{(n-1)} & & a_2 ^{(n-1)} & & & \\
 & & & & & a_1 ^{(n)}, & & & & 
\end{matrix}\]
where $\lambda_{\geq k} \coloneqq \sum_{k \leq \ell \leq n} \langle \lambda, h_\ell \rangle$ for $1 \leq k \leq n$, and the notation 
\[\begin{matrix}
a & & c\\
 & b & 
\end{matrix}\]
means that $a \geq b \geq c$. 
\end{ex}

\begin{ex}[{see \cite[Corollary 7]{Lit}}]\label{ex:GT_polytopes_type_C}
Let $G = Sp_{2n} (\c)$, and $\lambda \in P_+$. 
We identify the set $I$ of vertices of the Dynkin diagram with $\{1, 2, \ldots, n\}$ as follows:
\begin{align*}
&C_n\ \begin{xy}
\ar@{=>} (50,0) *++!D{1} *\cir<3pt>{};
(60,0) *++!D{2} *\cir<3pt>{}="C"
\ar@{-} "C";(65,0) \ar@{.} (65,0);(70,0)^*!U{}
\ar@{-} (70,0);(75,0) *++!D{n-1} *\cir<3pt>{}="D"
\ar@{-} "D";(85,0) *++!D{n} *\cir<3pt>{}="E"
\end{xy}.
\end{align*}
Define ${\bm i}_{C_n} \in R(w_0)$ by
\[
{\bm i}_{C_n} \coloneqq (1, \underbrace{2, 1, 2}_3, \underbrace{3, 2, 1, 2, 3}_5, \ldots, \underbrace{n, n-1, \ldots, 1, \ldots, n-1, n}_{2n-1}) \in I^{n^2}.
\] 
Then the string polytope $\Delta_{{\bm i}_{C_n}}(\lambda)$ is unimodularly equivalent to the Gelfand--Tsetlin polytope $GT_{C_n}(\lambda)$ of type $C_n$ which is defined to be the set of 
\[
(a_1 ^{(1)}, \underbrace{b_1 ^{(2)}, a_2 ^{(1)}, a_1 ^{(2)}}_3, \underbrace{b_1 ^{(3)}, b_2 ^{(2)},  a_3 ^{(1)},  a_2 ^{(2)},  a_1 ^{(3)}}_5, \ldots, \underbrace{b_1 ^{(n)}, \ldots, b_{n-1} ^{(2)}, a_n ^{(1)}, \ldots, a_1 ^{(n)}}_{2n-1}) \in \r^{n^2}
\] 
satisfying the following conditions as in \cref{ex:GT_polytopes_type_A}:
\[\begin{matrix}
\lambda_{\leq n} & & \lambda_{\leq n-1} & & \cdots & \lambda_{\leq 1} & & 0 \\
 & a_1 ^{(1)} & & a_2 ^{(1)} & & & a_n ^{(1)} & \\
 & & b_1 ^{(2)} & & \cdots & b_{n -1} ^{(2)}  & & 0\\
 & & & a_1 ^{(2)} & & & a_{n -1} ^{(2)} & \\
 & & & & \ddots & & & \vdots \\
 & & & & & b_1 ^{(n)} & & 0\\
 & & & & & & a_1 ^{(n)}, &
\end{matrix}\]
where $\lambda_{\leq k} \coloneqq \sum_{1 \leq \ell \leq k} \langle \lambda, h_\ell \rangle$ for $1 \leq k \leq n$.
\end{ex}

\begin{ex}[{see \cite[Example 5.10]{FN}}]
Let $G = Sp_4 (\c)$, and $\lambda \in P_+$. 
We define ${\bm i}_{C_2} \in R(w_0)$ as in \cref{ex:GT_polytopes_type_C}.
Then the Nakashima--Zelevinsky polytope $\widetilde{\Delta}_{{\bm i}_{C_2}}(\lambda)$ coincides with the set of $(a_1, \ldots, a_4) \in \r^4_{\geq 0}$ satisfying the following inequalities:
\begin{align*}
a_4 \leq \langle \lambda, h_2 \rangle,\quad a_3 \leq a_4 + \langle \lambda, h_1 \rangle,\quad a_2 \leq \min\{a_3 + \langle \lambda, h_1 \rangle, 2a_3\},\quad 2a_1 \leq \min\{2\langle \lambda, h_1 \rangle, a_2\}.
\end{align*}
If $\langle \lambda, h_i \rangle > 0$ for $1 \leq i \leq 2$, then $\widetilde{\Delta}_{{\bm i}_{C_2}}(\lambda)$ is a $4$-dimensional polytope with $11$ vertices. 
This is not unimodularly equivalent to the Gelfand--Tsetlin polytope $GT_{C_2}(\lambda)$ of type $C_2$ since $GT_{C_2}(\lambda)$ has $12$ vertices.
\end{ex}

\subsection{Newton--Okounkov bodies of Schubert varieties arising from cluster structures}\label{ss:Schubert_as_cluster}

In this subsection, we assume that $G$ is simply-laced. 
Let $B^-$ be the Borel subgroup of $G$ opposite to $B$, and $U^-$ the unipotent radical of $B^-$. Then we obtain an open embedding
\[U^- \hookrightarrow G/B,\ u \mapsto u \bmod B.\] 
For $w \in W$, set 
\begin{align*}
U^-_w&\coloneqq U^-\cap B \widetilde{w} B \subseteq G,
\end{align*}
where we take a lift $\widetilde{w} \in N_G(H)$ for $w \in W = N_G(H)/H$. 
We call $U^-_w$ the \emph{unipotent cell} associated with $w$. 
Note that the open embedding $U^- \hookrightarrow G/B$ above induces an open embedding $U_w^- \hookrightarrow X(w)$ and an isomorphism $\c(U_w^-) \simeq \c(X(w))$.
For $i \in I$, let $\mathfrak{g}_i \subseteq \mathfrak{g}$ denote the Lie subalgebra generated by $e_i, f_i, h_i$, which is isomorphic to $\mathfrak{sl}_2 (\c)$.
Then the embedding $\mathfrak{g}_i \hookrightarrow \mathfrak{g}$ of Lie algebras naturally lifts to an algebraic group homomorphism $\varphi_i \colon SL_2(\c) \rightarrow G$. 
For $(i_1, \ldots, i_m) \in R(w)$, we define a lift $\overline{w} \in N_G(H)$ for $w \in W = N_G(H)/H$ by 
\[\overline{w} \coloneqq \overline{s}_{i_1} \cdots \overline{s}_{i_m},\]
where we write 
\[\overline{s}_i \coloneqq \varphi_i \left(\begin{pmatrix}
0 & -1\\
1 & 0
\end{pmatrix}\right)\]
for $i \in I$. 
The element $\overline{w}$ is independent of the choice of a reduced word $(i_1, \ldots, i_m) \in R(w)$. 
For $w \in W$ and $\lambda \in P_+$, we set $v_{w \lambda} \coloneqq \overline{w} v_\lambda \in V(\lambda)$, and define $f_{w \lambda} \in V(\lambda)^{\ast}$ by $f_{w \lambda} (v_{w \lambda}) = 1$ and by $f(v) = 0$ for each weight vector $v \in V(\lambda)$ whose weight is different from $w \lambda$. 
Let $\{\varpi_i \mid i \in I\} \subseteq  P_+$ be the set of fundamental weights. 
For $u, u^\prime \in W$ and $i \in I$, we define a function $\Delta_{u \varpi_i, u^\prime \varpi_i} \in \c[G]$ by 
\[\Delta_{u \varpi_i, u^\prime \varpi_i} (g) \coloneqq f_{u \varpi_i} (g v_{u^\prime \varpi_i}),\]
which is called a \emph{generalized minor} (see also \cite[Section 2.3]{BFZ}). 
Denote by $D_{u \varpi_i, u^\prime \varpi_i} \in \c[U_w^-]$ the restriction of $\Delta_{u \varpi_i, u^\prime \varpi_i}$ to $U_w^-$, which is called a \emph{unipotent minor}.

Fix $w \in W$ and ${\bm i} = (i_1, \ldots, i_m) \in R(w)$. 
For $1 \leq k \leq m$, we write 
\begin{align*}
&w_{\leq k} \coloneqq s_{i_1} \cdots s_{i_k},\ {\rm and}\\
&k^{+} \coloneqq \min(\{k+1 \leq j \leq m \mid i_j = i_k\} \cup \{m+1\}).
\end{align*} 
We set 
\[
J \coloneqq \{1, \ldots, m\},\ J_{\rm fr} \coloneqq \{j \in J \mid j^+ = m+1\},\ J_{\rm uf} \coloneqq J\setminus J_{\rm fr},
\]
and define an integer matrix $\varepsilon^{\bm i} = (\varepsilon_{s, t})_{s \in J_{\rm uf}, t \in J} \in {\rm Mat}_{J_{\rm uf} \times J}(\mathbb{Z})$ as follows:
\[
\varepsilon_{s, t} \coloneqq
	\begin{cases}
   1&\text{if}\ s^+ = t, \\
	-1&\text{if}\ s = t^+, \\
   	c_{i_t, i_s}&\text{if}\ s < t < s^+ < t^+, \\
	-c_{i_t, i_s}&\text{if}\ t < s < t^+ < s^+,\\
	0&\text{otherwise},
\end{cases}
\]
where we recall that $(c_{i, j})_{i, j \in I}$ is the Cartan matrix.

\begin{ex}
Let $G = SL_4 (\c)$, and ${\bm i} = (1, 2, 1, 3, 2, 1) \in R(w_0)$.
Then we have 
\[\varepsilon^{\bm i} = \begin{pmatrix}
0 & -1 & 1 & 0 & 0 & 0 \\
1 & 0 & -1 & -1 & 1 & 0 \\
-1 & 1 & 0 & 0 & -1 & 1 
\end{pmatrix},\]
which coincides with the matrix $\varepsilon$ discussed in \cref{ex:basic_example_cluster_mutation}.
\end{ex}

Let us set  
\[
D(s, {\bm i}) \coloneqq D_{w_{\leq s} \varpi_{i_s}, \varpi_{i_s}}
\]
for $s \in J$.
We consider the cluster pattern $\mathcal{S} = \{{\mathbf s}_t = (\mathbf{A}_t, \varepsilon_t)\}_{t \in \mathbb{T}}$ whose initial seed is given as $\mathbf{s}_{t_0} = ((A_{s; t_0})_{s \in J}, \varepsilon^{\bm i})$. 

\begin{thm}[{\cite[Theorem 2.10]{BFZ} (see also \cite[Theorem B.4]{FO2})}]\label{t:upperBruhat}
For $w \in W$ and ${\bm i} = (i_1, \ldots, i_m) \in R(w)$, there exists a $\mathbb{C}$-algebra isomorphism $\mathscr{U}(\mathcal{S}) \xrightarrow{\sim} \mathbb{C}[U^-_w]$ given by $A_{s; t_0} \mapsto D(s, {\bm i})$ for each $s \in J$.
\end{thm}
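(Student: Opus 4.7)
The plan is to exhibit the desired isomorphism in two stages. First, I would show that the unipotent minors $D(s, {\bm i})$, $s \in J$, together with the exchange matrix $\varepsilon^{\bm i}$ form a genuine seed whose associated cluster algebra sits naturally inside $\c[U_w^-]$; second, I would identify the upper cluster algebra $\mathscr{U}(\mathcal{S})$ with the whole coordinate ring. The main inputs are the birational parametrization of $U_w^-$ by $\c^m$ attached to the reduced word ${\bm i}$ (which gives $\dim_\c U_w^- = m = |J|$ and a purely transcendental function field), together with the generalized minor identities of Berenstein--Fomin--Zelevinsky, which realize each cluster mutation $\mu_k$ as a three-term identity between unipotent minors associated to two reduced words differing by a single braid or commutation move.

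Concretely, I would first verify that $\{D(s, {\bm i})\}_{s \in J}$ is a free generating family of $\mathcal{F} = \c(U_w^-)$. Using the factorization parametrization together with the Chamber Ansatz, each $D(s, {\bm i})$ is expressed as a Laurent polynomial in the factorization coordinates whose leading monomial is triangular with respect to the order determined by ${\bm i}$; this gives algebraic independence, and a dimension count upgrades independence to generation of the function field. Next, I would check that at every $k \in J_{\rm uf}$ the three-term identity of \cite[Theorem 1.17]{BFZ} reads $D(k, {\bm i})\,D(k, {\bm i}') = \prod_{\ell} D(\ell, {\bm i})^{[\varepsilon_{k, \ell}]_+} + \prod_{\ell} D(\ell, {\bm i})^{[-\varepsilon_{k, \ell}]_+}$, where ${\bm i}'$ is the reduced word obtained from ${\bm i}$ by an elementary braid or commutation move, and that the combinatorial transformation of the matrix $\varepsilon^{\bm i}$ under this move is precisely matrix mutation at $k$. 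This identifies the initial seed of $\mathcal{S}$ with an honest seed inside $\c[U_w^-]$.

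The Laurent phenomenon (\cref{t:laurentpheno}) then supplies an injective homomorphism $\mathscr{U}(\mathcal{S}) \hookrightarrow \c[U_w^-]$, since all mutated cluster variables $A_{j; t}$ pull back to regular functions on $U_w^-$ (they are regular on a dense open torus and, by Laurent, lie in $\c[A_{j;t_0}^{\pm 1}] = \c[D(j,{\bm i})^{\pm 1}]$, which consists of rational functions on $U_w^-$ regular away from divisors already shown to be cut out by minors). For the reverse inclusion, I would combine the normality of $U_w^-$ with the observation that the union of open cluster tori in $U_w^-$, taken over a suitable collection of seeds reached by mutations from $\mathbf{s}_{t_0}$, has complement of codimension $\geq 2$; this forces any regular function on $U_w^-$ to lie in every Laurent ring $\c[A_{j;t}^{\pm 1} \mid j \in J]$, hence in their intersection $\mathscr{U}(\mathcal{S})$.

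The main obstacle is this final codimension-two argument, i.e.\ proving $\c[U_w^-] \subseteq \mathscr{U}(\mathcal{S})$ rather than merely $\supseteq$: it requires either a direct control of the vanishing loci of frozen and unfrozen minors inside $U_w^-$, or a reduction to the corresponding statement for double Bruhat cells in \cite{BFZ} followed by a passage to unipotent cells. The injectivity direction, by contrast, is comparatively routine once Step 1 has established the algebraic independence of the initial unipotent minors. The sign bookkeeping in Step 2, while elementary, is the other delicate point and is where the specific form of $\varepsilon^{\bm i}$ (with its four-case definition matching the BFZ conventions, up to transposition) gets used.
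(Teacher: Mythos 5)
The paper does not prove this theorem; it is cited directly from Berenstein--Fomin--Zelevinsky \cite[Theorem 2.10]{BFZ}, so there is no internal proof to compare against. Judged on its own merits, your outline captures the broad architecture of the BFZ argument (seed from unipotent minors, Laurent phenomenon, normality plus codimension-two covering), but it contains a genuine misconception in the central step. You assert that ``each cluster mutation $\mu_k$'' is realized ``as a three-term identity between unipotent minors associated to two reduced words differing by a single braid or commutation move,'' and propose to verify at every $k \in J_{\rm uf}$ that $\mu_k(\mathbf{s}_{\bm i}) = \mathbf{s}_{{\bm i}'}$ for a reduced word ${\bm i}'$. This is false: since $|J_{\rm uf}| \geq 2$ in most cases, the cluster pattern has infinitely many seeds while $R(w)$ is finite, so the overwhelming majority of mutations $\mu_k(\mathbf{s}_{\bm i})$ are not of the form $\mathbf{s}_{{\bm i}'}$ for any ${\bm i}'$. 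What is true (and used to prove the paper's \emph{Proposition} immediately after this theorem) is that the seeds $\mathbf{s}_{\bm i}$, ${\bm i} \in R(w)$, lie in a single mutation class, but this does not give you a minor identity for every $k$. What BFZ actually use is that for each unfrozen $k$ the \emph{mutated variable} $\mu_k(D(k,{\bm i}))$, defined abstractly by the exchange relation, is a regular function on the cell, which they establish by separate arguments rather than identifying it with $D(k, {\bm i}')$.

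The second issue is your treatment of the inclusion $\mathscr{U}(\mathcal{S}) \subseteq \c[U_w^-]$, which you present as the easy direction following ``comparatively routine'' once the initial seed is set up. This is not justified by what you write: the Laurent phenomenon only says cluster variables are Laurent in the initial cluster, hence elements of $\mathscr{U}(\mathcal{S})$ are rational functions regular on the open torus $\{D(j,{\bm i}) \neq 0\}$, and your parenthetical remark about ``divisors already shown to be cut out by minors'' does not explain why such a function extends regularly across those divisors. That extension is precisely the codimension-two and normality argument, so both inclusions in fact hinge on it. You also would not want to intersect over all (infinitely many) clusters directly; the crucial simplification in BFZ is to pass to the \emph{upper bound} $\overline{\mathcal{U}}$ (intersection over $t_0$ and its $|J_{\rm uf}|$ neighbors only) and invoke their coprimality result that $\overline{\mathcal{U}} = \mathscr{U}(\mathcal{S})$. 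Without that reduction, the covering argument you sketch does not close. Finally, the reference to \cite[Theorem 1.17]{BFZ} for the minor exchange relations appears to be misplaced; that theorem concerns mutation-invariance of upper bounds, not generalized minor identities.
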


We define a seed $\mathbf{s}_{\bm i}$ of $\c (U^-_w)$ by 
\[
\mathbf{s}_{\bm i} \coloneqq (\mathbf{D}_{\bm i} \coloneqq (D(s, \bm{i}))_{s \in J}, \varepsilon^{\bm i}),
\]
which corresponds to $\mathbf{s}_{t_0} = ((A_{s; t_0})_{s \in J}, \varepsilon^{\bm i}) \in \mathcal{S}$ through the isomorphism in \cref{t:upperBruhat}.
Since $G$ is simply-laced, the entries of $\varepsilon^{\bm i}$ are included in $\{-1, 0, 1\}$. 
Hence we can describe $\varepsilon^{\bm i}$ as the quiver whose vertex set is $J$ and whose arrow set is given as $\{s \rightarrow t \mid \varepsilon_{t, s} = 1\ {\rm or}\ \varepsilon_{s, t} = -1\}$.

\begin{ex}
Let $G = SL_4 (\c)$, and ${\bm i} = (1, 2, 1, 3, 2, 1) \in R(w_0)$.
Then the seed $\mathbf{s}_{\bm i} = (\mathbf{D}_{\bm i}, \varepsilon^{\bm i})$ is given as follows: 

	\hfill
	\scalebox{0.7}[0.7]{
		\begin{xy} 0;<1pt,0pt>:<0pt,-1pt>::
			(180,00) *+{D_{w_0 \varpi_3, \varpi_3}} ="1",
			(120,30) *+{D_{s_1 s_2 \varpi_2, \varpi_2}} ="2",
			(240,30) *+{D_{w_0 \varpi_2, \varpi_2}} ="3",
			(60,60) *+{D_{s_1 \varpi_1, \varpi_1}} ="4",
			(180,60) *+{D_{s_1 s_2 s_1 \varpi_1, \varpi_1}} ="5",
			(300,60) *+{D_{w_0 \varpi_1, \varpi_1}} ="6",
			"2", {\ar"1"},
			"4", {\ar"2"},
			"2", {\ar"5"},
			"5", {\ar"3"},
			"3", {\ar"2"},
			"5", {\ar"4"},
			"6", {\ar"5"},
		\end{xy}
	}
	\hfill
	\hfill

\end{ex}

\begin{prop}[{\cite[Remark 2.14]{BFZ}}]
For $w\in W$, the cluster pattern associated with ${\mathbf s}_{\bm i}$ is independent of the choice of ${\bm i}\in R(w)$. In other words, the seeds $\mathbf{s}_{\bm i}$, ${\bm i} \in R(w)$, are all mutually mutation equivalent. 
\end{prop}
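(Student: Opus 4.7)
The plan is to reduce to elementary braid moves on reduced words and then verify each case directly. By Matsumoto's theorem (equivalently Tits' word problem for Coxeter groups), any two reduced words ${\bm i},{\bm i}'\in R(w)$ are related by a finite sequence of elementary moves; in the simply-laced setting these are either commutation moves at positions $k,k+1$ (when $c_{i_k,i_{k+1}}=0$) or $3$-term braid moves on a triple $(\ldots,i,j,i,\ldots)\leftrightsquigarrow(\ldots,j,i,j,\ldots)$ with $c_{i,j}=-1$. It therefore suffices to prove that ${\mathbf s}_{\bm i}$ and ${\mathbf s}_{{\bm i}'}$ are mutation equivalent for each of the two types of elementary moves, and then induct along a braid-move chain.

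The commutation case is essentially formal: the truncated products $w_{\leq s}$ coincide outside the two swapped positions, and at $s=k,k+1$ the values $w_{\leq s}\varpi_{i_s}$ simply interchange. Hence the minors $D(s,{\bm i})$ and the entries of $\varepsilon^{\bm i}$ are merely relabeled by the transposition $k\leftrightarrow k+1$, and ${\mathbf s}_{\bm i}$, ${\mathbf s}_{{\bm i}'}$ are identified under this permutation of indices without any actual mutation. The $3$-braid case is the substantive one. Here the claim to prove is that $\mu_{k+1}({\mathbf s}_{\bm i})={\mathbf s}_{{\bm i}'}$ up to a fixed permutation of indices. The exchange-matrix half is a finite case analysis: the positions of the ``next occurrence'' $s^{+}$ used in the definition of $\varepsilon^{\bm i}$ are displaced only inside a small window around $k,k+1,k+2$, and a direct check shows that the induced change of $\varepsilon^{\bm i}$ matches the mutation rule $\mu_{k+1}$ exactly. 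The cluster-variable half requires the identity
\[
D(k+1,{\bm i})\,D(k+1,{\bm i}')=\prod_{\ell\in J}D(\ell,{\bm i})^{[\varepsilon_{k+1,\ell}^{\bm i}]_{+}}+\prod_{\ell\in J}D(\ell,{\bm i})^{[-\varepsilon_{k+1,\ell}^{\bm i}]_{+}},
\]
which identifies the cluster variable produced by the mutation with the unipotent minor $D(k+1,{\bm i}')$ attached to the new word.

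The main obstacle is establishing this generalized minor identity. It is a Plücker-type relation which, after projecting to the $\mathfrak{sl}_3$-Levi subgroup generated by the simple roots $i$ and $j$, reduces to the classical three-term Plücker relation in $SL_3$; the contribution of the rest of the word (to the left and right of the braided triple) factors out by the standard multiplicative behavior of generalized minors under left/right multiplication by elements of appropriate unipotent subgroups and tori. Once this identity is in hand, \cref{t:upperBruhat} ensures that the mutated cluster variable lies in $\c[U_w^-]$ and coincides with $D(k+1,{\bm i}')$, so $\mu_{k+1}({\mathbf s}_{\bm i})={\mathbf s}_{{\bm i}'}$ up to relabeling. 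Combining the commutation and $3$-braid cases and iterating along a chain of elementary braid moves yields the mutual mutation equivalence of $\{{\mathbf s}_{\bm i}\}_{{\bm i}\in R(w)}$, and in particular the cluster pattern generated by ${\mathbf s}_{\bm i}$ does not depend on the choice of ${\bm i}$.
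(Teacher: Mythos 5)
The paper gives no proof of its own here: the proposition is stated with a citation to \cite[Remark~2.14]{BFZ}, and your sketch is a reconstruction of the standard argument that underlies that reference (reduce to Matsumoto's elementary moves, show commutation moves are mere relabelings, and show a $3$-term braid move is a single cluster mutation composed with a relabeling, with the exchange relation being a short Pl\"ucker-type relation for generalized minors that localizes to rank $2$). That strategy is correct.

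There is, however, a concrete index error in the braid-move case which makes the key identity you write down not an exchange relation at all. With the paper's conventions $w_{\leq s} = s_{i_1}\cdots s_{i_s}$ and $s^{+}$ defined as the \emph{next} occurrence to the \emph{right}, consider a $3$-braid move at positions $k,k+1,k+2$ with $(i_k,i_{k+1},i_{k+2})=(i,j,i) \leftrightsquigarrow (j,i,j)$. Since $s_j s_i\varpi_i=s_is_js_i\varpi_i$ and $s_is_j\varpi_j=s_js_is_j\varpi_j$, one finds $D(k+1,{\bm i}')=D(k+2,{\bm i})$ and $D(k+2,{\bm i}')=D(k+1,{\bm i})$, while only the minor at position $k$ actually changes. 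Thus the mutation occurs at position $k$ (the \emph{first} of the braided triple, which is always unfrozen because $k^{+}=k+2\leq m$), followed by the transposition $k+1\leftrightarrow k+2$. Your displayed identity with index $k+1$ therefore equates $D(k+1,{\bm i})\,D(k+1,{\bm i}')=D(k+1,{\bm i})\,D(k+2,{\bm i})$, a product of two variables already present in ${\mathbf s}_{\bm i}$; the exchange relation one must prove is
\[
D(k,{\bm i})\,D(k,{\bm i}')=\prod_{\ell\in J}D(\ell,{\bm i})^{[\varepsilon^{\bm i}_{k,\ell}]_{+}}+\prod_{\ell\in J}D(\ell,{\bm i})^{[-\varepsilon^{\bm i}_{k,\ell}]_{+}}.
\]
One more point worth flagging: the $k$-th row of $\varepsilon^{\bm i}$ is supported not only on $\{k+1,k+2\}$ but also on the previous occurrences of the letters $i$ and $j$ to the left of $k$, so the right-hand side involves minors outside the braid window; the reduction to the rank-$2$ identity still works (this is exactly the generalized-minor relation from Fomin--Zelevinsky's double Bruhat cell paper that BFZ invoke), but the phrase ``factors out'' should explicitly account for these extra cluster variables rather than only the ``remainder of the word.''
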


\begin{rem}
This proposition can be extended to non-simply-laced case by \cite[Theorem 3.5]{FG:amal}.
\end{rem}

Let $\mathcal{S} = \{{\mathbf s}_t = (\mathbf{A}_t = (A_{j; t})_{j \in J}, \varepsilon_t)\}_{t \in \mathbb{T}}$ be the cluster pattern associated with ${\mathbf s}_{\bm i}$. 
Then we see by Theorems \ref{t:laurentpheno} and \ref{t:upperBruhat} that the function field $\c(U_w^-)$ is isomorphic to the field $\c(A_{j; t} \mid j \in J)$ of rational functions for each $t \in \mathbb{T}$.
Hence, following Section \ref{ss:cluster_algebra}, we obtain a valuation $v_{\mathbf{s}_t}$ on $\c(U_w^-) \simeq \c(X(w))$.

\begin{thm}[{\cite[Corollaries 6.6, 6.25 and Theorem 7.1]{FO2}}]\label{t:NO_body_parametrized_by_seeds}
If $G$ is simply-laced, then the following hold for all $w \in W$, $\lambda \in P_+$, and $t \in \mathbb{T}$.
\begin{enumerate}
\item[{\rm (1)}] The Newton--Okounkov body $\Delta(X(w), \mathcal{L}_\lambda, v_{\mathbf{s}_t}, \tau_\lambda)$ does not depend on the choice of a refinement of the opposite dominance order $\preceq_{\varepsilon_t} ^{\rm op}$.
\item[{\rm (2)}] The Newton--Okounkov body $\Delta(X(w), \mathcal{L}_\lambda, v_{\mathbf{s}_t}, \tau_\lambda)$ is a rational convex polytope.
\item[{\rm (3)}] If $t \overset{k}{\text{---}} t^\prime$, then it holds that
\[\Delta(X(w), \mathcal{L}_\lambda, v_{\mathbf{s}_{t^\prime}}, \tau_\lambda) = \mu_k ^T (\Delta(X(w), \mathcal{L}_\lambda, v_{\mathbf{s}_t}, \tau_\lambda)).\]
\item[{\rm (4)}] The Newton--Okounkov body $\Delta(X(w), \mathcal{L}_\lambda, v_{\mathbf{s}_{\bm i}}, \tau_\lambda)$ is related to the string polytope $\Delta_{\bm i} (\lambda)$ by a unimodular transformation. 
More strongly, the equality 
\[
\Delta_{\bm i} (\lambda) = \Delta(X(w), \mathcal{L}_\lambda, v_{\mathbf{s}_{\bm i}}, \tau_\lambda) M_{\bm i}
\]
holds, where $M_{\bm i} = (d_{s, t})_{s, t \in J} \in {\rm Mat}_{J \times J}(\mathbb{Z})$ is defined by 
\[
d_{s, t} \coloneqq \begin{cases}
\langle s_{i_{t+1}} \cdots s_{i_s} \varpi_{i_s}, h_{i_t} \rangle &{\rm if}\ t \leq s,\\
0 &{\rm if}\ t > s.
\end{cases}
\]
\item[{\rm (5)}] There exists a seed ${\mathbf s}_{\bm i} ^{\rm mut} = (\mathbf{D}_{\bm i} ^{\rm mut}, \varepsilon^{{\bm i}, {\rm mut}}) \in \mathcal{S}$ such that the corresponding Newton--Okounkov body $\Delta(X(w), \mathcal{L}_\lambda, v_{{\mathbf s}_{\bm i} ^{\rm mut}}, \tau_\lambda)$ is related to the Nakashima--Zelevinsky polytope $\widetilde{\Delta}_{\bm i} (\lambda)$ by a unimodular transformation.
\end{enumerate}
\end{thm}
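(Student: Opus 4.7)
The plan is to use \cref{t:NO_body_crystal_lowest_term_valuations} as the bridge between the cluster-theoretic valuations $v_{\mathbf{s}_t}$ and the classical lowest term valuations $v_{\bm i}^{\rm low}, \tilde{v}_{\bm i}^{\rm low}$ on $\c(X(w)) \simeq \c(t_1, \ldots, t_m)$. I would first establish part (4), since it anchors the entire picture. Using explicit formulas expressing the unipotent minors $D(s, {\bm i})$ as subtraction-free Laurent polynomials in the coordinates $t_1, \ldots, t_m$ coming from the open embedding $\c^m \hookrightarrow U_w^-$, one reads off the leading monomial of each $D(s, {\bm i})$ with respect to $\preceq$. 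Collecting these exponents yields the matrix $M_{\bm i}$; the fact that its rows take the stated form reflects that $D(s, {\bm i})$ is a matrix coefficient of $v_{s_{i_{t+1}} \cdots s_{i_s} \varpi_{i_s}}$. From this change of basis it follows that $v_{\mathbf{s}_{\bm i}}$ and $\tilde{v}_{\bm i}^{\rm low}$ are related by right multiplication by $M_{\bm i}$, and then \cref{t:NO_body_crystal_lowest_term_valuations} (1) yields the claim.

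For part (3), I would use the explicit cluster exchange relation to substitute the new variable $A_{k; t^\prime}$ in terms of the old cluster and track leading exponents. Writing a section $\sigma \in H^0(X(w), \mathcal{L}_\lambda^{\otimes d})$ first as a Laurent polynomial in the $A_{j; t^\prime}$ and then expanding via the mutation formula produces a Laurent polynomial in the $A_{j; t}$. A case analysis on the sign of the $k$-th coordinate of $v_{\mathbf{s}_{t^\prime}}(\sigma/\tau_\lambda^d)$, paralleling the computation in the proof of \cref{p:tropicalized_muation_as_combinatorial}, shows that taking the lowest term with respect to $\preceq_{\varepsilon_t}^{\rm op}$ transforms the exponent by exactly $\mu_k^T$. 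The full-rank assumption $(\diamondsuit)$ is essential to ensure that the dominance-ordered minima are separated on both sides.

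Parts (1) and (2) can be handled simultaneously by producing a graded basis of the section ring $\bigoplus_{k \geq 0} H^0(X(w), \mathcal{L}_\lambda^{\otimes k})$ whose elements have pairwise distinct images under $v_{\mathbf{s}_t}$, the image of each basis element being a leading exponent already separated by the dominance order (not just by the chosen refinement). Natural candidates are the dual canonical basis of $V_w(\lambda)^\ast$ or the theta basis of Gross--Hacking--Keel--Kontsevich; their compatibility with cluster structures in the simply-laced case is exactly what is required. The resulting description of $\Delta(X(w), \mathcal{L}_\lambda, v_{\mathbf{s}_t}, \tau_\lambda)$ as the convex hull of finitely many rational points, combined with finite generation of the section ring, yields both independence of the refinement and rationality of the polytope. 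Finally, for part (5), one exhibits a concrete mutation sequence from $\mathbf{s}_{\bm i}$ to a seed $\mathbf{s}_{\bm i}^{\rm mut}$ whose associated parametrization of $U_w^-$ realizes the Nakashima--Zelevinsky coordinates, and then invokes part (3) together with \cref{t:NO_body_crystal_lowest_term_valuations} (2).

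The main obstacle is producing the separated basis demanded by parts (1) and (2): this is precisely where the deep input of cluster theory is required, and it is what forces the simply-laced assumption on $G$. Once such a basis is at hand, the remaining arguments reduce to careful bookkeeping against the combinatorics of mutation and to the explicit Laurent expansions underpinning parts (4) and (5).
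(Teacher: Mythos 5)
This theorem is not proved in the present paper: it is quoted wholesale from the first author's joint work with Oya, cited as [FO2, Corollaries 6.6, 6.25 and Theorem 7.1]. So there is no internal proof against which to compare your sketch; the paper treats all five parts as black boxes.

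On its own terms, your outline is broadly consistent with the strategy of [FO2] — in particular you correctly locate the crux in parts (1)--(2), namely the need for a basis of the section ring whose elements are \emph{pointed} with respect to every cluster seed (have a unique exponent that dominates all others in the dominance order $\preceq_{\varepsilon_t}$, not merely in a chosen total refinement), and you correctly propose the dual canonical basis (via its relation to the theta basis in the simply-laced case) as the supplier of that property. But the logical ordering you suggest does not close up. Your argument for part (3) — substitute the exchange relation, expand, and observe that the lowest term transforms by $\mu_k^T$ — is simply false for a general Laurent polynomial in the $A_{j;t'}$: after substitution, cancellations and the piecewise nature of $\min$ mean the new lowest exponent need not be the tropicalized image of the old one. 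The ``case analysis on the sign of $g_k$'' only produces the clean formula when applied to a single pointed element, whose expansion in the mutated cluster is again pointed with $g$-vector given by $\mu_k^T$; this is the Fock--Goncharov/GHKK sign-coherence statement, which is precisely the deep structural input you defer to parts (1)--(2). So part (3) cannot be established independently of the pointed basis, and the three parts must be set up simultaneously around the dual canonical basis rather than sequentially as you propose. A second, smaller gap: the full-rank hypothesis $(\diamondsuit)$ is needed so that $\preceq_{\varepsilon_t}$ is a genuine partial order (antisymmetry), which in turn is what guarantees the refinement-independence in part (1); ``separation of minima on both sides'' is not the role it plays. With those two corrections your outline is a fair description of the shape of the argument in [FO2], though of course fleshing it out requires the substantive cluster-theoretic input that the present paper deliberately imports as a citation.
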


\begin{rem}
We can extend \cref{t:NO_body_parametrized_by_seeds} (4), (5) to non-simply-laced case by taking refinements of the opposite dominance orders $\preceq_{\varepsilon^{\bm i}} ^{\rm op}$, $\preceq_{\varepsilon^{\bm{i}, {\rm mut}}} ^{\rm op}$ appropriately (see \cite[Proposition 6.4 and Theorem 6.24]{FO2}).
\end{rem}

\begin{ex}[{see \cite[Example 7.12]{FO2}}]
Let $G = SL_4 (\c)$, and ${\bm i} = (1, 2, 1, 3, 2, 1) \in R(w_0)$.
Then we have 
\[M_{\bm i} = \begin{pmatrix}
1 & 0 & 0 & 0 & 0 & 0 \\
1 & 1 & 0 & 0 & 0 & 0 \\
0 & 1 & 1 & 0 & 0 & 0 \\
1 & 1 & 0 & 1 & 0 & 0 \\
0 & 1 & 1 & 1 & 1 & 0 \\
0 & 0 & 0 & 1 & 1 & 1 
\end{pmatrix},\]
and the Newton--Okounkov body $\Delta(G/B, \mathcal{L}_\lambda, v_{{\bf s}_{\bm i}}, \tau_\lambda)$ coincides with the set of $(g_1, \ldots, g_6) \in \r^6$ satisfying the following inequalities:
\begin{align*}
&0 \leq g_6 \leq \langle \lambda, h_1 \rangle,\ 0 \leq g_5 \leq \langle \lambda, h_2 \rangle,\ 0 \leq g_4 \leq \langle \lambda, h_3 \rangle,\ -g_5 \leq g_3 \leq -g_6 + \langle \lambda, h_1 \rangle,\\
&-g_4 \leq g_2 \leq -g_5 + \langle \lambda, h_2 \rangle,\ -g_2 -g_4 \leq g_1 \leq -g_3 -g_6 + \langle \lambda, h_1 \rangle.
\end{align*}
\end{ex}

In addition, by \cite[Corollary 7.7 (3)]{FO2} and \cref{t:unique_lattice_point_Steinert}, we obtain the following.

\begin{cor}\label{c:unique_lattice_point_reflexivity}
If $G$ is simply-laced and $w = w_0$, then the following hold for all $t \in \mathbb{T}$.
\begin{enumerate}
\item[{\rm (1)}] The Newton--Okounkov body $\Delta(G/B, \mathcal{L}_{2 \rho}, v_{\mathbf{s}_t}, \tau_{2 \rho})$ contains exactly one lattice point ${\bm a}_t$ in its interior.
\item[{\rm (2)}] The dual $\Delta(G/B, \mathcal{L}_{2 \rho}, v_{\mathbf{s}_t}, \tau_{2 \rho})^\vee$ is a lattice polytope.
\end{enumerate}
\end{cor}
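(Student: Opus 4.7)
The plan is to deduce this corollary directly from the hypotheses of Theorem \ref{t:unique_lattice_point_Steinert} by verifying them for the valuation $v_{\mathbf{s}_t}$ coming from the cluster structure. Recall that that theorem requires two inputs: a valuation with $1$-dimensional leaves, and the property that the associated semigroup $S(G/B, \mathcal{L}_{2\rho}, v_{\mathbf{s}_t}, \tau_{2\rho})$ is finitely generated and saturated. Once both hypotheses are confirmed, the conclusions of Theorem \ref{t:unique_lattice_point_Steinert} yield exactly the two statements (1) and (2) of the corollary, with ${\bm a}_t$ being the unique interior lattice point and $\Delta(G/B, \mathcal{L}_{2 \rho}, v_{\mathbf{s}_t}, \tau_{2 \rho})^\vee$ being a lattice polytope.

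First, I would check the $1$-dimensional leaves property. By Definition \ref{d:main_valuation}, $v_{\mathbf{s}_t}$ is defined as the lowest term valuation $v^{\rm low}_{\leq_t}$ on $\mathcal{F} \simeq \c(U^-_{w_0}) \simeq \c(G/B)$ with respect to some total order $\leq_t$ that refines the opposite dominance order. By Example \ref{ex:lowest_term_valuation}, every lowest term valuation has $1$-dimensional leaves, so this hypothesis is automatic.

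Next, I would invoke \cite[Corollary 7.7 (3)]{FO2}, which establishes finite generation and saturation of the Newton--Okounkov semigroup $S(X(w), \mathcal{L}_\lambda, v_{\mathbf{s}_t}, \tau_\lambda)$ (or at least provides it in the specialization $w = w_0$, $\lambda = 2\rho$ needed here). This is the technical heart of the argument and is not re-proved here; it relies on the toric degeneration machinery developed in \cite{FO2} via cluster structures, together with Theorem \ref{t:NO_body_parametrized_by_seeds} ensuring that the polytope $\Delta(G/B, \mathcal{L}_{2\rho}, v_{\mathbf{s}_t}, \tau_{2\rho})$ is rational convex.

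Finally, applying Theorem \ref{t:unique_lattice_point_Steinert} to the valuation $v = v_{\mathbf{s}_t}$ and section $\tau = \tau_{2\rho}$ gives both conclusions simultaneously: part (1) by the uniqueness of the interior lattice point, and part (2) by the integrality of the vertices of the dual polytope. The main subtle point in writing out this proof is simply pointing carefully to the cited statement in \cite{FO2} and observing that Theorem \ref{t:unique_lattice_point_Steinert}, although stated for simple $G$ in \cite{Ste}, applies equally in the semisimple simply-laced case (as indicated in the remark following Theorem \ref{t:unique_lattice_point_Steinert}).
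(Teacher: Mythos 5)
Your proposal matches the paper's own argument exactly: the paper states the corollary follows from \cite[Corollary 7.7 (3)]{FO2} (finite generation and saturation of the semigroup) together with \cref{t:unique_lattice_point_Steinert}, and you correctly fill in the remaining hypothesis (one-dimensional leaves, automatic for a lowest-term valuation). No gap; this is the same route, just written out in more detail than the paper bothers to give.
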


\subsection{Combinatorial mutations on Newton--Okounkov bodies}\label{ss:combinatorial_mutations_on_NO}

Recall from \cref{c:unique_lattice_point_reflexivity} that ${\bm a}_t$ denotes the unique interior lattice point of $\Delta(G/B, \mathcal{L}_{2 \rho}, v_{\mathbf{s}_t}, \tau_{2 \rho})$. As an application of \cref{t:NO_body_parametrized_by_seeds} and \cref{p:tropicalized_muation_as_combinatorial}, let us prove the following.

\begin{thm}\label{t:combinatorial_mutations_NO}
If $G$ is simply-laced, then the following hold.
\begin{enumerate}
\item[{\rm (1)}] For fixed $w \in W$ and $\lambda \in P_+$, the Newton--Okounkov bodies $\Delta(X(w), \mathcal{L}_\lambda, v_{\mathbf{s}_t}, \tau_\lambda)$, $t \in \mathbb{T}$, are all combinatorially mutation equivalent in $M_\r$ up to unimodular transformations.
\item[{\rm (2)}] For $w = w_0$ and $\lambda = 2\rho$, the translated polytopes $\Delta(G/B, \mathcal{L}_{2\rho}, v_{\mathbf{s}_t}, \tau_{2\rho}) -{\bm a}_t$, $t \in \mathbb{T}$, are all combinatorially mutation equivalent in $M_\r$ up to unimodular transformations. 
As a consequence, the dual polytopes $\Delta(G/B, \mathcal{L}_{2 \rho}, v_{\mathbf{s}_t}, \tau_{2 \rho})^\vee$, $t \in \mathbb{T}$, are all combinatorially mutation equivalent in $N_\r$ up to unimodular transformations.
\end{enumerate}
\end{thm}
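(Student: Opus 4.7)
The plan is to prove Part (1) by iterating the local decomposition from \cref{p:tropicalized_muation_as_combinatorial} along paths in the tree $\mathbb{T}$, then deduce Part (2) by tracking the unique interior lattice point through the same mutations. For Part (1), let $t \overset{k}{\text{---}} t'$ be an edge and abbreviate $\Delta_t := \Delta(X(w), \mathcal{L}_\lambda, v_{\mathbf{s}_t}, \tau_\lambda)$. \cref{t:NO_body_parametrized_by_seeds} (3) gives $\Delta_{t'} = \mu_k^T(\Delta_t)$, and \cref{p:tropicalized_muation_as_combinatorial} decomposes $\mu_k^T = f \circ \varphi_{w, F}$ with $f \in {\it GL}_J(\z)$ and $\varphi_{w, F}$ a combinatorial mutation in $M_\r$. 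The intermediate polytope $\varphi_{w, F}(\Delta_t) = f^{-1}(\Delta_{t'})$ is a rational convex polytope by \cref{t:NO_body_parametrized_by_seeds} (2) together with the unimodularity of $f^{-1}$, so $\varphi_{w, F}$ is a legitimate step in the definition of combinatorial mutation equivalence in $M_\r$. Iteration along an arbitrary path in $\mathbb{T}$ then proves (1).

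For Part (2), the key additional input is that $\mu_k^T$ is an involutive piecewise-linear bijection of $\r^J$ preserving $\z^J$. Since it restricts to a homeomorphism $\Delta_t \cong \Delta_{t'}$ preserving interiors, it must map the unique interior lattice point ${\bm a}_t$ of $\Delta_t$ to the unique interior lattice point ${\bm a}_{t'}$ of $\Delta_{t'}$; in particular ${\bm a}_{t'} = f(\varphi_{w, F}({\bm a}_t))$. Setting $\bar{\bm a} := \varphi_{w, F}({\bm a}_t) = f^{-1}({\bm a}_{t'})$ and using linearity of $f$, one gets
\[ \Delta_{t'} - {\bm a}_{t'} = f\bigl(\varphi_{w, F}(\Delta_t) - \bar{\bm a}\bigr), \]
which reduces the task to relating $\Delta_t - {\bm a}_t$ and $\varphi_{w, F}(\Delta_t) - \bar{\bm a}$ by a sequence of combinatorial mutations and unimodular transformations in $M_\r$. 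The main obstacle lies exactly here: the conjugate-by-translation map $u \mapsto \varphi_{w, F}(u + {\bm a}_t) - \bar{\bm a}$ has its breakpoint on the affine hyperplane $\{u_k = -({\bm a}_t)_k\}$, rather than on an origin-passing hyperplane as required by \cref{def:M_side}, so it is not literally a $\varphi_{w', F'}$. I plan to handle this by writing the conjugated map as a composition of a standard combinatorial mutation with an explicit unimodular shear along $\varepsilon_k^{(t)}$, using the concrete forms of $w$, $F$, and $f$ from the proof of \cref{p:tropicalized_muation_as_combinatorial}; induction along a path in $\mathbb{T}$ then yields the first assertion of (2).

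The final consequence about the dual polytopes follows by polar duality. Since $\Delta_t - {\bm a}_t$ is a rational convex polytope containing the origin in its interior and its polar dual $(\Delta_t - {\bm a}_t)^\ast = \Delta_t^\vee$ is a lattice polytope by \cref{c:unique_lattice_point_reflexivity} (2), \cref{prop:compatibility} transports each combinatorial mutation in $M_\r$ acting on $\Delta_t - {\bm a}_t$ into a combinatorial mutation in $N_\r$ acting on $\Delta_t^\vee$, while sending a unimodular transformation on the $M_\r$-side to its contragredient on the $N_\r$-side. Applied to the mutation equivalence established in the first half of Part (2), this yields the desired combinatorial mutation equivalence of the dual polytopes $\Delta_t^\vee$, $t \in \mathbb{T}$, in $N_\r$ up to unimodular transformations.
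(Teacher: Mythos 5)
Part (1) of your proof matches the paper's approach: you invoke \cref{t:NO_body_parametrized_by_seeds}~(2),~(3) and \cref{p:tropicalized_muation_as_combinatorial}, and check that the intermediate polytope $\varphi_{w,F}(\Delta_t)=f^{-1}(\Delta_{t'})$ is again a rational convex polytope so that the step is legitimate. That is fine.

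For part (2), however, you have correctly identified the crux of the problem but not actually resolved it. You observe that the conjugated map $u \mapsto \varphi_{w,F}(u+{\bm a}_t)-\bar{\bm a}$ has its non-differentiability locus on the hyperplane $\{u_k = -({\bm a}_t)_k\}$, which need not pass through the origin, and you propose to fix this by a ``unimodular shear along $\varepsilon_k^{(t)}$''. That plan cannot succeed in the form stated: for any primitive $w'$, lattice polytope $F' \subseteq (w')^\perp$, and $A \in {\it GL}_J(\z)$, the composition $A\circ\varphi_{w',F'}$ is linear on a fan of cones with apex the origin, because $\varphi_{w',F'}(u) = u - (\min_{v\in F'}\langle u,v\rangle)\,w'$ and the domains of linearity are determined by which vertex of $F'$ attains the minimum, a condition homogeneous in $u$. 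Hence the break locus of $A\circ\varphi_{w',F'}$ always passes through the origin, and no single such factor can match a piecewise-linear map whose break hyperplane is $\{u_k = -({\bm a}_t)_k\}$ when $({\bm a}_t)_k\neq 0$. What is actually true, and what your proof is missing, is that the obstruction does not occur at all: the paper's \cref{p:unique_lattice_point_NO_cluster} computes the unique interior lattice point ${\bm a}_t$ explicitly and shows $({\bm a}_t)_j=0$ for every $j\in J_{\rm uf}$ (with ${\bm a}_t$ in fact independent of $t$ and fixed by every $\mu_k^T$). Consequently $\varphi_{w,F}({\bm a}_t)=f({\bm a}_t)={\bm a}_t$, the break hyperplane does pass through the origin after translation, and one gets directly $\varphi_{w,F}(\Delta_t-{\bm a}_t)=f^{-1}(\Delta_{t'}-{\bm a}_{t'})$. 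Your argument that $\mu_k^T({\bm a}_t)={\bm a}_{t'}$ via the homeomorphism is correct, but it is not strong enough; you need the stronger pointwise information that the mutable coordinates of ${\bm a}_t$ vanish. Without that computation (which in the paper rests on Littelmann's explicit description of string polytopes together with \cref{t:NO_body_parametrized_by_seeds}~(4) and an induction over $\mathbb{T}$), part~(2) does not go through.

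The final paragraph on dual polytopes is correct in outline (transport via \cref{prop:compatibility} and the rule $\gamma(Q)^\ast = {}^t\gamma^{-1}(Q^\ast)$ for unimodular $\gamma$), but it is conditional on having first established the mutation equivalence of the translated polytopes $\Delta_t-{\bm a}_t$, which is exactly the step that is missing.
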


\cref{t:combinatorial_mutations_NO} (1) directly follows from \cref{t:NO_body_parametrized_by_seeds} and \cref{p:tropicalized_muation_as_combinatorial}. 
In order to prove \cref{t:combinatorial_mutations_NO} (2), we compute ${\bm a}_t$ explicitly.

\begin{prop}\label{p:unique_lattice_point_NO_cluster}
If $G$ is simply-laced, then the unique interior lattice point ${\bm a}_t = (a_j)_{j \in J}$ of $\Delta(G/B, \mathcal{L}_{2 \rho}, v_{\mathbf{s}_t}, \tau_{2 \rho})$ is given by
\[
a_j = 
\begin{cases}
0 &({\rm if}\ j \in J_{\rm uf}),\\
1 &({\rm if}\ j \in J_{\rm fr})
\end{cases}
\]
for $j \in J$. In particular, ${\bm a}_t$ is independent of the choice of $t \in \mathbb{T}$, and fixed under the tropicalized cluster mutations.
\end{prop}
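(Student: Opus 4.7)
The plan is to proceed in three steps: to verify that the candidate point is fixed by every tropicalized cluster mutation; to check the formula at one well-chosen seed; and to propagate from that seed to all of $\mathbb{T}$ using the uniqueness of the interior lattice point from \cref{c:unique_lattice_point_reflexivity}. For the invariance step, let ${\bm a} = (a_j)_{j \in J}$ be the candidate point of the statement. Since $a_k = 0$ for every $k \in J_{\rm uf}$, both summands $[-\varepsilon_{k, j}^{(t)}]_+ a_k$ and $\varepsilon_{k, j}^{(t)} [a_k]_+$ vanish in the defining formula of $\mu_k^T$, and the $k$-th coordinate is simply $-a_k = 0$. Hence $\mu_k^T({\bm a}) = {\bm a}$ for every $k \in J_{\rm uf}$.

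For the base case, I would fix the reduced word ${\bm i} = {\bm i}_{X_n} \in R(w_0)$ from \cref{ex:unique_lattice_point} corresponding to the Lie type $X_n$ of $G$, and let $t_0 \in \mathbb{T}$ satisfy $\mathbf{s}_{t_0} = \mathbf{s}_{\bm i}$. By \cref{t:NO_body_parametrized_by_seeds}\,(4), the Newton--Okounkov body $\Delta(G/B, \mathcal{L}_{2\rho}, v_{\mathbf{s}_{t_0}}, \tau_{2\rho})$ is related to the string polytope $\Delta_{\bm i}(2\rho)$ by the unimodular transformation $M_{\bm i}$, and its interior lattice point equals ${\bm a}_{X_n} M_{\bm i}^{-1}$, where ${\bm a}_{X_n}$ is the explicit interior lattice point of $\Delta_{\bm i}(2\rho)$ listed in \cref{ex:unique_lattice_point}. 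The problem thus reduces to the single linear identity ${\bm a} \cdot M_{{\bm i}_{X_n}} = {\bm a}_{X_n}$, which can be checked case by case using the combinatorial description of the entries of $M_{\bm i}$ through the weights $\langle s_{i_{t+1}} \cdots s_{i_s}\varpi_{i_s}, h_{i_t}\rangle$. For a general simply-laced semisimple $G$, this reduces to the case of simple factors.

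Finally, for the propagation, \cref{t:NO_body_parametrized_by_seeds}\,(3) together with the fact that each $\mu_k^T$ is a piecewise $\z$-linear bijection of $\r^J$ that restricts to a bijection of $\z^J$ and sends interior to interior imply, via \cref{c:unique_lattice_point_reflexivity}, that ${\bm a}_{t'} = \mu_k^T({\bm a}_t)$ whenever $t \overset{k}{\text{---}} t'$. Induction on the tree distance from $t_0$, using the invariance from the first step, then yields ${\bm a}_t = {\bm a}$ for every $t \in \mathbb{T}$. The main obstacle is the base case: the identity ${\bm a} \cdot M_{{\bm i}_{X_n}} = {\bm a}_{X_n}$ is a finite but type-by-type check for $X_n \in \{A_n, D_n, E_6, E_7, E_8\}$; a potentially cleaner uniform alternative would be to recognize ${\bm a}$ as the $v_{\mathbf{s}_{\bm i}}$-valuation of the product $\prod_{j \in J_{\rm fr}} D(j, \bm i)$ of frozen unipotent minors and to argue geometrically that this product represents an interior global section of $\mathcal{L}_{2\rho}$.
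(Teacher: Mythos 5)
Your argument is correct and follows essentially the same route as the paper: a base-case check at the seed ${\mathbf s}_{{\bm i}_{X_n}}$ via \cref{t:NO_body_parametrized_by_seeds}\,(4) and the explicit data in \cref{ex:unique_lattice_point}, followed by induction on tree distance using \cref{t:NO_body_parametrized_by_seeds}\,(3). The only cosmetic difference is that you split the induction into an explicit fixed-point check $\mu_k^T({\bm a}) = {\bm a}$ and a separate interior-to-interior argument, whereas the paper folds both into a single observation that $\mu_k^T$ is unimodular on each half-space and the identity on the hyperplane $\{g_k = 0\}$ containing ${\bm a}_t$.
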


\begin{proof}
Since $\mathfrak{g}$ is isomorphic to a direct sum of simply-laced simple Lie algebras as a Lie algebra, there exists ${\bm i}_{\mathfrak{g}} \in R(w_0)$ which is a concatenation of reduced words ${\bm i}_{X_n}$ defined in \cref{ex:unique_lattice_point}. Combining the computation of ${\bm a}_{X_n}$ in \cref{ex:unique_lattice_point} with \cref{t:NO_body_parametrized_by_seeds} (4), we deduce the assertion for ${\mathbf s}_{t_0} = {\mathbf s}_{{\bm i}_{\mathfrak{g}}}$. We proceed by induction on the distance from $t_0$ in $\mathbb{T}$. Take $t, t^\prime \in \mathbb{T}$ and $k \in J_{\rm uf}$ such that $t \overset{k}{\text{---}} t^\prime$. We assume that the assertion holds for $t$. By definition, the tropicalized cluster mutation $\mu_k ^T$ is given by a unimodular transformation on each of the half spaces $\{(g_j)_{j \in J} \in \r^J \mid g_k \geq 0\}$ and $\{(g_j)_{j \in J} \in \r^J \mid g_k \leq 0\}$. In addition, $\mu_k ^T$ is identity on the boundary hyperplane $\{(g_j)_{j \in J} \in \r^J \mid g_k = 0\}$ which includes the interior lattice point ${\bm a}_t$ of $\Delta(G/B, \mathcal{L}_{2 \rho}, v_{\mathbf{s}_t}, \tau_{2 \rho})$. From these, we deduce that $\mu_k ^T({\bm a}_t)$ is an interior lattice point of 
\[\mu_k ^T (\Delta(G/B, \mathcal{L}_{2 \rho}, v_{\mathbf{s}_t}, \tau_{2 \rho})) = \Delta(G/B, \mathcal{L}_{2 \rho}, v_{\mathbf{s}_{t^\prime}}, \tau_{2 \rho}).\]
This implies the assertion for $t^\prime$, which proves the proposition.
\end{proof}

By \cref{t:NO_body_parametrized_by_seeds} (4) and \cref{p:unique_lattice_point_NO_cluster}, we can compute the unique interior lattice point of the string polytope $\Delta_{\bm i} (2 \rho)$ as follows.

\begin{cor}\label{c:unique_interior_string_polytope}
Let ${\bm i} = (i_1, \ldots, i_m) \in R(w_0)$. If $G$ is simply-laced, then the unique interior lattice point ${\bm a}_{\bm i} = (a_j)_{j \in J}$ of the string polytope $\Delta_{\bm i} (2 \rho)$ is given by
\[
a_j = \sum_{k \in J_{\rm fr};\ j \leq k} \langle s_{i_{j +1}} \cdots s_{i_k} \varpi_{i_k}, h_{i_j}\rangle
\]
for $j \in J$.
\end{cor}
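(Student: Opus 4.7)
The plan is to derive this corollary as a direct computation that combines Theorem \ref{t:NO_body_parametrized_by_seeds}(4) with Proposition \ref{p:unique_lattice_point_NO_cluster}. By part (4) of the theorem, the string polytope satisfies
\[
\Delta_{\bm i}(2\rho) = \Delta(G/B, \mathcal{L}_{2\rho}, v_{\mathbf{s}_{\bm i}}, \tau_{2\rho}) \, M_{\bm i},
\]
where $M_{\bm i} \in {\rm Mat}_{J \times J}(\z)$ is the explicit matrix recalled in the theorem. Since $M_{\bm i}$ is unimodular by \cref{t:NO_body_parametrized_by_seeds}(4), right-multiplication by $M_{\bm i}$ is a homeomorphism of $\r^J$ that preserves $\z^J$ setwise, hence it carries interior lattice points bijectively to interior lattice points. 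In particular, the unique interior lattice point ${\bm a}_{\bm i}$ of $\Delta_{\bm i}(2\rho)$ is the image of the unique interior lattice point of $\Delta(G/B, \mathcal{L}_{2\rho}, v_{\mathbf{s}_{\bm i}}, \tau_{2\rho})$ under this right-multiplication.

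The second ingredient identifies that interior lattice point. By \cref{p:unique_lattice_point_NO_cluster} applied to $t_0 = t_{\bm i}$, it is the indicator row vector ${\bm b} = \sum_{k \in J_{\rm fr}} {\bm e}_k \in \z^J$. It then remains only to read off the $j$-th coordinate of ${\bm b} M_{\bm i}$:
\[
({\bm b} M_{\bm i})_j \;=\; \sum_{k \in J} b_k \, d_{k,j} \;=\; \sum_{k \in J_{\rm fr}} d_{k,j},
\]
and substituting the definition of $d_{k,j}$ (which vanishes for $k < j$ and equals $\langle s_{i_{j+1}} \cdots s_{i_k} \varpi_{i_k}, h_{i_j} \rangle$ for $k \geq j$) yields precisely the claimed formula
\[
a_j = \sum_{k \in J_{\rm fr},\, j \leq k} \langle s_{i_{j+1}} \cdots s_{i_k} \varpi_{i_k}, h_{i_j} \rangle.
\]

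There is no real obstacle here; the corollary is essentially a one-line bookkeeping consequence of the two cited results, and the only subtle point to state explicitly is the preservation of the interior-lattice-point property under the unimodular change of coordinates $M_{\bm i}$. The nontrivial work (the existence and uniqueness of the interior lattice point, its description as the indicator of $J_{\rm fr}$, and the cluster-theoretic comparison between the Newton--Okounkov body and the string polytope) has already been carried out in \cref{c:unique_lattice_point_reflexivity}, \cref{p:unique_lattice_point_NO_cluster}, and \cref{t:NO_body_parametrized_by_seeds}(4).
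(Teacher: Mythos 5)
Your argument is correct and is exactly the computation the paper has in mind: the paper simply states that the corollary follows from \cref{t:NO_body_parametrized_by_seeds}~(4) and \cref{p:unique_lattice_point_NO_cluster}, and your write-up supplies the bookkeeping, namely that ${\bm a}_{\bm i} = {\bm b} M_{\bm i}$ with ${\bm b} = \sum_{k \in J_{\rm fr}} {\bm e}_k$, which yields the claimed sum once the definition of $d_{k,j}$ is substituted. The observation that $M_{\bm i}$ is lower unitriangular (hence unimodular) and therefore maps the unique interior lattice point of $\Delta(G/B, \mathcal{L}_{2\rho}, v_{\mathbf{s}_{\bm i}}, \tau_{2\rho})$ to that of $\Delta_{\bm i}(2\rho)$ is the right justification, and it matches the paper's intent.
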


\begin{proof}[{Proof of \cref{t:combinatorial_mutations_NO} (2)}]
We write $\Delta \coloneqq \Delta(G/B, \mathcal{L}_{2\rho}, v_{\mathbf{s}_t}, \tau_{2\rho})$, and set $\Delta' \coloneqq \mu_k^T(\Delta)$. 
Let us consider the unique interior lattice point ${\bm a}_t=(a_j)_{j \in J}$ of $\Delta$. 
Recall from \cref{p:tropicalized_muation_as_combinatorial} that $\mu_k^T = f \circ \varphi_{w,F}$ for specific $w \in M$, $F \subseteq w^\perp$, and $f \in GL_J (\mathbb{Z})$.
Since we have $a_j = 0$ for all $j \in J_{\rm uf}$ by \cref{p:unique_lattice_point_NO_cluster}, the definitions of $\varphi_{w, F}$ and $f$ imply that $\varphi_{w,F}({\bm a}_t)=f({\bm a}_t)={\bm a}_t$. 

In addition, we have 
\[\varphi_{w,F}(\Delta - {\bm a}_t) = f^{-1}(\Delta' - {\bm a}_t),\]
which implies by Propositions \ref{prop:compatibility}, \ref{p:compatibility_well-defined} that 
\begin{align*}
{\rm mut}_w(\Delta^\vee,F)^\ast = \varphi_{w,F}(\Delta - {\bm a}_t) = f^{-1}(\Delta' - {\bm a}_t);
\end{align*}
here, we note that $(\Delta^\vee)^\ast = \Delta - {\bm a}_t$. Hence it follows that
\[{\rm mut}_w(\Delta^\vee,F) = (f^{-1}(\Delta' - {\bm a}_t))^\ast.\]
In general, for $Q \subseteq M_\r$ containing the origin in its interior and $\gamma \in {\it GL}(M_\r)$, it follows from the definition of the polar dual that the equality $\gamma(Q)^\ast = \!^t\gamma^{-1}(Q^\ast)$ holds, where $\!^t\gamma \in {\it GL}(N_\r)$ denotes the dual map of $\gamma$. 
Indeed, we have
\begin{align*}
\gamma(Q)^\ast &= \{ v \in N_\r \mid \langle \gamma(u), v \rangle \geq -1 \ {\rm for\ all}\ u \in Q\} \\ 
&= \{ v \in N_\r \mid \langle u, \!^t\gamma(v) \rangle \geq -1 \ {\rm for\ all}\ u \in Q\} \\
&=\{ \ \!^t\gamma^{-1}(v) \mid v \in N_\r,\ \langle u, v \rangle \geq -1 \ {\rm for\ all}\ u \in Q\} \\
&=\!^t\gamma^{-1}(Q^\ast). 
\end{align*}
Notice that if $\gamma$ is unimodular, then so is $\!^t\gamma$. 

Hence we conclude that 
$${\rm mut}_w(\Delta^\vee,F) = \!^tf(\Delta'^\vee).$$
This implies the required assertion since $\!^tf$ is a unimodular transformation. 
\end{proof}

\begin{rem}
After this paper and \cite{FO2}, an interesting preprint \cite{Qin2} of Qin appeared, which shows that the upper global basis is a common triangular basis when the Cartan matrix of $\mathfrak{g}$ is symmetrizable. 
Using this result by Qin, we can generalize \cref{t:NO_body_parametrized_by_seeds} to non-simply-laced case; this generalization will be explained in the revised version of \cite{FO2}. 
Hence \cref{t:combinatorial_mutations_NO} and \cref{c:unique_interior_string_polytope} can also be extended to non-simply-laced case.
\end{rem}

\bigskip

\section{Relation with FFLV polytopes}

In this section, we restrict ourselves to the cases $G = SL_{n+1}(\c)$ (of type $A_n$) and $G = Sp_{2n}(\c)$ (of type $C_n$).
FFLV polytopes were introduced by Feigin--Fourier--Littelmann \cite{FeFL1, FeFL2} and Vinberg \cite{Vin} to study PBW-filtrations of $V(\lambda)$. 
Kiritchenko \cite{Kir} proved that the FFLV polytope $FFLV(\lambda)$ of type $A_n$ coincides with the Newton--Okounkov body of $(G/B, \mathcal{L}_\lambda)$ associated with a valuation given by counting the orders of zeros/poles along a specific sequence of translated Schubert varieties. 
Feigin--Fourier--Littelmann \cite{FeFL3} realized the FFLV polytopes of types $A_n$ and $C_n$ as Newton--Okounkov bodies of $(G/B, \mathcal{L}_\lambda)$ using a different kind of valuation. 
Ardila--Bliem--Salazar \cite{ABS} gave an explicit bijective piecewise-affine map from the Gelfand--Tsetlin polytope $GT(\lambda)$ of type $A_n$ (resp., type $C_n$) to the FFLV polytope $FFLV(\lambda)$ of type $A_n$ (resp., type $C_n$) by generalizing Stanley's transfer map \cite{Sta} to marked poset polytopes; recall Examples \ref{ex:GT_polytopes_type_A} and \ref{ex:GT_polytopes_type_C} for the definition of $GT(\lambda)$.
In this section, we relate Ardila--Bliem--Salazar's transfer map with combinatorial mutations.

\subsection{Marked poset polytopes}

In this subsection, we recall the definition of Ardila--Bliem--Salazar's marked order polytopes and marked chain polytopes together with their transfer map \cite{ABS}. 

First, we recall what a marked poset is. Let $\widetilde{\Pi}$ be a poset equipped with a partial order $\prec$, and $A \subseteq \widetilde{\Pi}$ a subset of $\widetilde{\Pi}$ containing all minimal elements and maximal elements in $\widetilde{\Pi}$. 
Take a vector $\lambda=(\lambda_a)_{a \in A} \in \r^A$, called a \textit{marking}, such that $\lambda_a \leq \lambda_b$ whenever $a \prec b$ in $\widetilde{\Pi}$. We call the triple $(\widetilde{\Pi},A,\lambda)$ a \textit{marked poset}. 

\begin{defi}[{\cite[Definition 1.2]{ABS}}]
Work with the same notation as above. We set
\begin{align*}
&{\mathcal O}(\widetilde{\Pi},A,\lambda)\coloneqq\{(x_p)_{p \in \widetilde{\Pi} \setminus A} \in \r^{\widetilde{\Pi} \setminus A} \mid 
x_p \leq x_q \text{ if }p \prec q, \; \lambda_a \leq x_p \text{ if }a \prec p, \; x_p \leq \lambda_a \text{ if }p \prec a\}, \\
&{\mathcal C}(\widetilde{\Pi},A,\lambda)\coloneqq\{(x_p)_{p \in \widetilde{\Pi} \setminus A} \in \r^{\widetilde{\Pi} \setminus A}_{\ge 0} \mid \sum_{i=1}^kx_{p_i} \leq \lambda_b-\lambda_a \text{ if }a \prec p_1 \prec \cdots \prec p_k \prec b\}. 
\end{align*}
If $\lambda \in \z^A$, then those polytopes are lattice polytopes (\cite[Lemma 3.5]{ABS}). 
The polytope ${\mathcal O}(\widetilde{\Pi},A,\lambda)$ is called the \textit{marked order polytope}, and ${\mathcal C}(\widetilde{\Pi},A,\lambda)$ is called the \textit{marked chain polytope}. 
\end{defi}

\begin{rem}
Originally, the \textit{order polytope} ${\mathcal O}(\Pi)$ and the \textit{chain polytope} ${\mathcal C}(\Pi)$ of a poset $\Pi$ were introduced by Stanley \cite{Sta}. 
The notions of marked poset polytopes generalize those of ordinary poset polytopes. 
Indeed, given a poset $\Pi$, by setting $\widetilde{\Pi} \coloneqq \Pi \cup \{\hat{0},\hat{1}\}$, $A \coloneqq \{\hat{0},\hat{1}\}$, and $\lambda = (\lambda_{\hat{0}},\lambda_{\hat{1}}) \coloneqq (0,1)$, where $\hat{0}$ (resp., $\hat{1}$) is the new minimum (resp., maximum) element not belonging to $\Pi$, we see that the marked order polytope ${\mathcal O}(\widetilde{\Pi},A,\lambda)$ (resp., ${\mathcal C}(\widetilde{\Pi},A,\lambda)$) coincides with the ordinary order polytope ${\mathcal O}(\Pi)$ (resp., the ordinary chain polytope ${\mathcal C}(\Pi)$). 
\end{rem}

In \cite[Theorem 3.4]{ABS}, a piecewise-affine bijection $\widetilde{\phi}$ from ${\mathcal O}(\widetilde{\Pi},A,\lambda)$ to ${\mathcal C}(\widetilde{\Pi},A,\lambda)$ was constructed, which is called a \textit{transfer map}. The piecewise-affine map $\widetilde{\phi} \colon \r^{\widetilde{\Pi} \setminus A} \rightarrow \r^{\widetilde{\Pi} \setminus A}$, $(x_p)_p \mapsto (x_p^\prime)_p$, is defined as follows: 
\[x_p^\prime \coloneqq \min(\{x_p - x_{p'} \mid p' \lessdot p, \ p' \in \widetilde{\Pi} \setminus A\} \cup \{ x_p - \lambda_{p'} \mid p' \lessdot p, \ p' \in A\})\]
for $p \in \widetilde{\Pi} \setminus A$, where for $p,q \in \widetilde{\Pi}$, $q \lessdot p$ means that $p$ \textit{covers} $q$, that is, $q \prec p$ and there is no $q' \in \widetilde{\Pi} \setminus \{p,q\}$ with $q \prec q' \prec p$. 

Now, we recall a key notion which we will use in the proof of \cref{thm:marked_chain_order}, called \textit{marked chain-order polytopes}, introduced in \cite{FF} and developed in \cite{FFLP}. 
We remark that the original notion of marked chain-order polytopes is more general, but we restrict it for our purpose. 
Take a marked poset $(\widetilde{\Pi}, A, \lambda)$ and fix $\Pi' \subseteq \widetilde{\Pi} \setminus A$. 
We define ${\mathcal O}_{\Pi'}(\widetilde{\Pi}, A, \lambda)$ as follows: 
\begin{align*}
{\mathcal O}_{\Pi'}(\widetilde{\Pi}, A, \lambda) \coloneqq \{&(x_p)_{p \in \widetilde{\Pi} \setminus A} \in \r^{\widetilde{\Pi} \setminus A} \mid \ x_p \geq 0 \text{ for all } p \in \Pi', \\
&\sum_{i=1}^k x_{p_i} \leq y_b - y_a \text{ for }a \prec p_1 \prec \cdots \prec p_k \prec b \text{ with }p_i \in \Pi' \text{ and }a,b \in \widetilde{\Pi} \setminus \Pi'\}, 
\end{align*}
where for $c \in \widetilde{\Pi} \setminus \Pi'$, we set 
\[
y_c \coloneqq \begin{cases} \lambda_c &\text{ if }c \in A, \\ x_c &\text{ otherwise}. \end{cases}
\]
We can directly check that ${\mathcal O}_\emptyset(\widetilde{\Pi}, A, \lambda)={\mathcal O}(\widetilde{\Pi}, A, \lambda)$ and ${\mathcal O}_{\widetilde{\Pi} \setminus A}(\widetilde{\Pi}, A, \lambda)={\mathcal C}(\widetilde{\Pi}, A, \lambda)$. 
By taking $\Pi'$ with $\emptyset \subsetneq \Pi' \subsetneq \widetilde{\Pi} \setminus A$, we obtain an ``intermediate polytope'' between a marked order polytope and a marked chain polytope. 
It is proved in \cite[Proposition 2.4]{FFLP} that if $\lambda \in \z^A$, then ${\mathcal O}_{\Pi'}(\widetilde{\Pi}, A, \lambda)$ is a lattice polytope for every $\Pi' \subseteq \widetilde{\Pi} \setminus A$. 
Define a map $\widetilde{\phi}_{\Pi'} \colon \r^{\widetilde{\Pi} \setminus A} \rightarrow \r^{\widetilde{\Pi} \setminus A}$, $(x_p)_p \mapsto (x_p^\prime)_p$, by 
\begin{align}\label{eq:transfer_t}
&x_p^\prime \coloneqq  \begin{cases}
\min(\{x_p-x_{p'} \mid p' \lessdot p, \ p' \in \widetilde{\Pi} \setminus A\} \cup \{ x_p-\lambda_{p'} \mid p' \lessdot p, \ p' \in A\}) &\text{ if }p \in \Pi', \\
x_p &\text{ otherwise}
\end{cases}
\end{align}
for $p \in \widetilde{\Pi} \setminus A$. 
Notice that $\widetilde{\phi}_{\widetilde{\Pi}\setminus A}=\widetilde{\phi}$ and $\widetilde{\phi}_\emptyset={\rm id}$. 
In \cite[Theorem 2.1]{FFLP}, it is proved that the map $\widetilde{\phi}_{\Pi'}$ gives a piecewise-affine bijection from ${\mathcal O}(\widetilde{\Pi}, A, \lambda)$ to ${\mathcal O}_{\Pi'}(\widetilde{\Pi}, A, \lambda)$.

\subsection{Combinatorial mutation equivalence of marked poset polytopes}

The second named author proved in \cite[Theorem 4.1]{Hig} that the transfer map between ordinary poset polytopes can be described as a composition of combinatorial mutations in $M_\r$. 
We can generalize this result to marked poset polytopes under some conditions. 

We say that a poset $\widetilde{\Pi}$ is \textit{pure} if every maximal chain in $\widetilde{\Pi}$ has the same length. 
When $\widetilde{\Pi}$ is pure, all chains starting from a minimal element in $\widetilde{\Pi}$ and ending at $p$ have the same length for each $p \in \widetilde{\Pi}$. 
We denote by $r(p)$ the length of such chains. 

Let $(\widetilde{\Pi},A,\lambda)$ be a marked poset with $\lambda \in \z^A$. Assume that $\widetilde{\Pi}$ is pure, and that $\lambda$ satisfies $\lambda_a=\lambda_b$ for all $a,b \in A$ with $r(a)=r(b)$. 
Then there exists ${\bm u} = (u_p)_{p \in \widetilde{\Pi} \setminus A} \in {\mathcal O}(\widetilde{\Pi},A,\lambda) \cap \z^{\widetilde{\Pi} \setminus A}$ such that
\begin{equation}\label{eq:assumption}
\begin{split}
u_p &= u_{p'} \text{ for all }p,p' \in \widetilde{\Pi} \setminus A \text{ with } r(p)=r(p'), \text{ and }\\
u_p &= \lambda_a \text{ for all }p \in \widetilde{\Pi} \setminus A \text{ and }a \in A\text{ with }r(p)=r(a). 
\end{split}
\end{equation}

Let $\lambda^r$ denote the marking given by $(\lambda^r)_a = r(a)$ for $a \in A$. 
Then it is proved in \cite[Corollary 23]{FFP} that for a pure poset $\widetilde{\Pi}$, 
${\mathcal O}(\widetilde{\Pi},A,\lambda^r)$ (resp., ${\mathcal C}(\widetilde{\Pi},A,\lambda^r)$) contains a unique interior lattice point. 
Indeed, the unique interior lattice point $(r_p)_{p \in \widetilde{\Pi} \setminus A} \in {\mathcal O}(\widetilde{\Pi},A,\lambda^r)$ is given by $r_p=r(p)$ for all $p \in \widetilde{\Pi} \setminus A$, while the unique interior lattice point $(r_p')_{p \in \widetilde{\Pi} \setminus A} \in {\mathcal C}(\widetilde{\Pi},A,\lambda^r)$ is given by $r_p'=1$ for all $p \in \widetilde{\Pi} \setminus A$. 
We notice that $(r_p)_p$ satisfies \eqref{eq:assumption} and $\widetilde{\phi}((r_p)_p) = (r_p')_p$. Write
\begin{align*}
\overline{{\mathcal O}}(\widetilde{\Pi},A,\lambda^r)\coloneqq{\mathcal O}(\widetilde{\Pi},A,\lambda^r) - (r_p)_{p \in \widetilde{\Pi} \setminus A}, \;\text{ and }\;
\overline{{\mathcal C}}(\widetilde{\Pi},A,\lambda^r)\coloneqq{\mathcal C}(\widetilde{\Pi},A,\lambda^r)-(r_p')_{p \in \widetilde{\Pi} \setminus A}. 
\end{align*}
Namely, $\overline{{\mathcal O}}(\widetilde{\Pi},A,\lambda^r)$ (resp., $\overline{{\mathcal C}}(\widetilde{\Pi},A,\lambda^r)$) contains the origin as the unique interior lattice point. 

We regard polytopes appearing below as ones living in $M_\r$. 
\begin{thm}\label{thm:marked_chain_order}
Let $\widetilde{\Pi}$ be a pure poset. 
\begin{itemize}
\item[(1)] Let $(\widetilde{\Pi},A,\lambda)$ be a marked poset with $\lambda \in \z^A$ such that $\lambda$ satisfies $\lambda_a=\lambda_b$ for all $a,b \in A$ with $r(a)=r(b)$. 
Take a (not necessarily interior) lattice point ${\bm u} = (u_p)_{p \in \widetilde{\Pi}\setminus A}$ satisfying \eqref{eq:assumption}. 
Then the translated marked order polytope ${\mathcal O}(\widetilde{\Pi},A,\lambda) - {\bm u}$ and the translated marked chain polytope ${\mathcal C}(\widetilde{\Pi},A,\lambda) - \widetilde{\phi}({\bm u})$ are combinatorially mutation equivalent in $M_\r$. 
\item[(2)] Consider the marked poset $(\widetilde{\Pi},A,\lambda^r)$. Then $\overline{{\mathcal O}}(\widetilde{\Pi},A,\lambda^r)$ and $\overline{{\mathcal C}}(\widetilde{\Pi},A,\lambda^r)$ are combinatorially mutation equivalent in $M_\r$. 
\end{itemize}
\end{thm}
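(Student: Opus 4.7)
The plan is to deduce Part~(2) from Part~(1) by specializing $\lambda=\lambda^r$ and ${\bm u}=(r(p))_{p\in\widetilde\Pi\setminus A}$, which visibly satisfies~\eqref{eq:assumption} and for which $\widetilde\phi({\bm u})=(1)_{p}$; thus $\mathcal O(\widetilde\Pi,A,\lambda^r)-{\bm u}=\overline{\mathcal O}(\widetilde\Pi,A,\lambda^r)$ and $\mathcal C(\widetilde\Pi,A,\lambda^r)-\widetilde\phi({\bm u})=\overline{\mathcal C}(\widetilde\Pi,A,\lambda^r)$, so Part~(1) immediately yields Part~(2). The substantive content is Part~(1), which I would prove by interpolating along the marked chain-order polytopes introduced in \cite{FF} and developed in \cite{FFLP}.

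Concretely, fix a linear extension $p_1,\dots,p_N$ of $\widetilde\Pi\setminus A$ in \emph{reverse} topological order (so that $p_i\prec p_j$ in $\widetilde\Pi$ forces $i>j$), and set $\Pi_k\coloneqq\{p_1,\dots,p_k\}$. Let $T_p$ denote the one-coordinate transformation that replaces $x_p$ by the minimum in~\eqref{eq:transfer_t} while fixing every other coordinate. A direct induction on $k$ using~\eqref{eq:transfer_t} shows $\widetilde\phi_{\Pi_k}=T_{p_k}\circ\cdots\circ T_{p_1}$: the reverse-topological choice is essential here because at step $k$ every cover $p'\lessdot p_k$ either lies in $A$ or has index strictly greater than $k$, so its coordinate is still in its original (un-mutated) state and the composition reproduces $\widetilde\phi_{\Pi_k}$ on the nose. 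Consequently the translated polytopes $\mathcal O_{\Pi_k}(\widetilde\Pi,A,\lambda)-\widetilde\phi_{\Pi_k}({\bm u})$ for $k=0,1,\dots,N$ interpolate between $\mathcal O(\widetilde\Pi,A,\lambda)-{\bm u}$ and $\mathcal C(\widetilde\Pi,A,\lambda)-\widetilde\phi({\bm u})$.

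The core computation is then to realize each one-step passage as a combinatorial mutation $\varphi_{w,F}$ in $M_\r$. Writing $y_p\coloneqq x_p-\widetilde\phi_{\Pi_{k-1}}({\bm u})_p$ and invoking~\eqref{eq:assumption} together with the purity of $\widetilde\Pi$ (which forces every cover $p'\lessdot p_k$ to satisfy $r(p')=r(p_k)-1$, so that $u_{p'}$ for $p'\notin A$ and $\lambda_{p'}$ for $p'\in A$ both equal the common value $U(r(p_k)-1)$), a direct calculation yields
\[y'_{p_k}=y_{p_k}-\max\bigl(\{y_{p'}\mid p'\lessdot p_k,\ p'\notin A\}\cup\{0\mid p'\lessdot p_k,\ p'\in A\}\bigr)\]
(with the second set omitted if empty) and $y'_p=y_p$ for $p\neq p_k$. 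Taking $w\coloneqq-{\bm e}_{p_k}\in M$ (which is primitive) and
\[F\coloneqq{\rm conv}\bigl(\{-{\bm e}_{p'}\mid p'\lessdot p_k,\ p'\notin A\}\cup\{{\bf 0}\mid p'\lessdot p_k,\ p'\in A\}\bigr)\subseteq w^\perp,\]
one checks $\min_{v\in F}\langle y,v\rangle=-\max(\{y_{p'}\mid p'\lessdot p_k,\ p'\notin A\}\cup\{0\})$, and hence $\varphi_{w,F}(y)=y-(\min_{v\in F}\langle y,v\rangle)\,w$ reproduces the displayed formula in coordinate $p_k$ and the identity elsewhere. When every cover of $p_k$ lies in $A$ the formula reduces to $y'_{p_k}=y_{p_k}$, so this step is the identity and may be omitted.

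The intermediate images $\mathcal O_{\Pi_k}(\widetilde\Pi,A,\lambda)-\widetilde\phi_{\Pi_k}({\bm u})$ are (rational convex) lattice polytopes by \cite[Proposition 2.4]{FFLP}, so the composition of the mutations for $k=1,\dots,N$ witnesses the desired combinatorial mutation equivalence of $\mathcal O(\widetilde\Pi,A,\lambda)-{\bm u}$ and $\mathcal C(\widetilde\Pi,A,\lambda)-\widetilde\phi({\bm u})$ in $M_\r$. The main obstacle I anticipate is the rank-by-rank bookkeeping in the calculation of $y'_{p_k}$: one must track the cancellation among $\widetilde\phi_{\Pi_{k-1}}({\bm u})_{p_k}=u_{p_k}$, $\widetilde\phi_{\Pi_k}({\bm u})_{p_k}=u_{p_k}-U(r(p_k)-1)$, and the cover-values $u_{p'}$, $\lambda_{p'}$ simultaneously. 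This cancellation is exactly what~\eqref{eq:assumption} is engineered to guarantee, so the bookkeeping, while mildly intricate, is not essentially obstructive.
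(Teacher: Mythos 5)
Your proposal is correct and follows essentially the same route as the paper's proof: decompose the transfer map into one-coordinate steps along a reverse-topological ordering, realize each step (after the translation forced by \eqref{eq:assumption} and purity) as a combinatorial mutation $\varphi_{w,F}$ with $w=-{\bm e}_{p}$ and $F$ spanned by $-{\bm e}_{p'}$ for covers $p'\notin A$ and $\mathbf 0$ for covers in $A$, and invoke \cite[Proposition 2.4]{FFLP} for convexity of the intermediate marked chain-order polytopes. The only presentational difference is that you track the translated $y$-coordinates step by step, whereas the paper proves the global conjugation identity $\overline{\varphi}_i=f_{\{q_1,\ldots,q_i\}}\circ\widetilde{\phi}_{\{q_1,\ldots,q_i\}}\circ f^{-1}$ in one shot.
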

\begin{proof}

Since the assertion (2) directly follows from (1), we will prove the assertion (1). 

\noindent
{\bf The first step}.
For each $p \in \widetilde{\Pi} \setminus A$, set 
\begin{align*}
w_p &\coloneqq -{\bm e}_p, \\
F_p &\coloneqq {\rm conv}(\{-{\bm e}_{p'} \mid p' \lessdot p, \ p' \in \widetilde{\Pi} \setminus A\} \cup \{{\bf 0} \mid p' \lessdot p, \ p' \in A\}), \text{ and }\\
\varphi_p &\coloneqq \varphi_{w_p,F_p}. 
\end{align*}
Note that $F_p \subseteq w_p^\perp$. Then the direct computation shows the following: 
\begin{align*}
\varphi_q((x_p)_{p \in \widetilde{\Pi} \setminus A})&=(x_p)_p - \min\{\langle (x_p)_p, v \rangle \mid v \in F_q \} w_q \\
&=(x_p)_p + \min(\{ -x_{p'} \mid p' \lessdot q, \ p' \in \widetilde{\Pi} \setminus A\} \cup \{ 0 \mid p' \lessdot q, \ p' \in A\}){\bm e}_q,
\end{align*}
which implies that if we write $\varphi_q((x_p)_{p \in \widetilde{\Pi} \setminus A}) = (x_p^\prime)_{p \in \widetilde{\Pi} \setminus A}$, then we have 
\begin{align*}
x_p^\prime&=\begin{cases}
\min(\{x_p-x_{p'} \mid p' \lessdot p, \ p' \in \widetilde{\Pi} \setminus A\} \cup \{x_p \mid p' \lessdot p, \ p' \in A\}) &\text{ if }p=q, \\
x_p &\text{ otherwise}
\end{cases}
\end{align*}
for $p \in \widetilde{\Pi} \setminus A$.
We write $\widetilde{\Pi}\setminus A=\{q_1,\ldots,q_d\}$, and arrange $q_1,q_2,\ldots,q_d$ such as $q_i \prec q_j$ in $\widetilde{\Pi}$ only if $i>j$. 
Let $$\overline{\varphi}_i \coloneqq \varphi_{q_i} \circ \cdots \circ \varphi_{q_2} \circ \varphi_{q_1} \text{ for }i=1,\ldots,d.$$ 
In particular, $\overline{\varphi}_d$ includes all $\varphi_q$'s for $q \in \widetilde{\Pi}\setminus A$ arranged in the order ``from top to bottom''. 
Since each $\varphi_q$ changes only the $q$-th entry based on $p'$-th entries with $p' \lessdot q$, we obtain the following: if we write $\overline{\varphi}_i((x_p)_{p \in \widetilde{\Pi} \setminus A}) = (x_p^\prime)_{p \in \widetilde{\Pi} \setminus A}$, then it follows that
\begin{align*}
x_p^\prime=\begin{cases}
\min(\{x_p-x_{p'} \mid p' \lessdot p, \ p' \in \widetilde{\Pi} \setminus A\} \cup \{x_p \mid p' \lessdot p, \ p' \in A\}) &\text{ if }p \in \{q_1,\ldots,q_i\}, \\
x_p &\text{ otherwise}
\end{cases}
\end{align*}
for $p \in \widetilde{\Pi} \setminus A$.

\noindent
{\bf The second step}.
For $\Pi' \subseteq \widetilde{\Pi} \setminus A$, we define a translation map $f_{\Pi'}$ as follows: \begin{align*}
f_{\Pi'}  \colon \r^{\widetilde{\Pi} \setminus A} \rightarrow \r^{\widetilde{\Pi} \setminus A}, \ (x_p)_{p \in \widetilde{\Pi} \setminus A} \mapsto (x_p)_{p \in \widetilde{\Pi} \setminus A} -\widetilde{\phi}_{\Pi'}({\bm u}),
\end{align*}
where $\widetilde{\phi}_{\Pi'}$ is the map defined in \eqref{eq:transfer_t}. For simplicity, we write $f \coloneqq f_\emptyset$. 

In the third step, we will prove that 
\begin{align}\label{eq:goal}
\overline{\varphi}_i=f_{\{q_1,\ldots,q_i\}} \circ \widetilde{\phi}_{\{q_1,\ldots,q_i\}} \circ f^{-1} \text{ for all }i=1,\ldots,d. 
\end{align}
Once we prove this, we obtain that $\overline{\varphi}_i ({\mathcal O}(\widetilde{\Pi}, A, \lambda)-{\bm u})$ is convex for all $i$ since 
\begin{align*}
\overline{\varphi}_i ({\mathcal O}(\widetilde{\Pi}, A, \lambda)-{\bm u})&=f_{\{q_1,\ldots,q_i\}} \circ \widetilde{\phi}_{\{q_1,\ldots,q_i\}} \circ f^{-1}({\mathcal O}(\widetilde{\Pi}, A, \lambda)-{\bm u}) \\
&=f_{\{q_1,\ldots,q_i\}} \circ \widetilde{\phi}_{\{q_1,\ldots,q_i\}}({\mathcal O}(\widetilde{\Pi}, A, \lambda)) \\
&=f_{\{q_1,\ldots,q_i\}}({\mathcal O}_{\{q_1,\ldots,q_i\}}(\widetilde{\Pi}, A, \lambda)),
\end{align*}
and ${\mathcal O}_{\{q_1,\ldots,q_i\}}(\widetilde{\Pi}, A, \lambda)$ is a lattice polytope by \cite[Proposition 2.4]{FFLP}. Moreover, we see that
\begin{align*}
\overline{\varphi}_d ({\mathcal O}(\widetilde{\Pi}, A, \lambda)-{\bm u}) =f_{\widetilde{\Pi} \setminus A}({\mathcal C}(\widetilde{\Pi}, A, \lambda))={\mathcal C}(\widetilde{\Pi}, A, \lambda)-\widetilde{\phi}({\bm u}), 
\end{align*}
as required. 

\noindent
{\bf The third step}. We prove \eqref{eq:goal}. Given $(x_p)_{p \in \widetilde{\Pi} \setminus A} \in \r^{\widetilde{\Pi} \setminus A}$, 
we apply the map $\widetilde{\phi}_{\{q_1,\ldots,q_i\}}$ to $f^{-1}((x_p)_p)=(x_p + u_p)_p$. If we write $\widetilde{\phi}_{\{q_1,\ldots,q_i\}}((x_p + u_p)_p) = (x_p^\prime)_p$, then it holds that
\begin{align*}
x_p^\prime &= \begin{cases}
\min(\{x_p - x_{p'} + u_p - u_{p'} \mid p' \lessdot p, \ p' \in \widetilde{\Pi} \setminus A\} \cup \{x_p + u_p - \lambda_{p'} \mid p' \lessdot p, \ p' \in A\}) \\
\quad\quad\quad\;\quad \text{if } p \in \{q_1,\ldots,q_i\}, \\
x_p + u_p \;\;\; \text{ otherwise}
\end{cases}
\end{align*}
for $p \in \widetilde{\Pi} \setminus A$.
Remark that the set $\{u_p - u_{p'} \mid p' \lessdot p, \ p' \in \widetilde{\Pi} \setminus A\} \cup \{u_p - \lambda_{p'} \mid p' \lessdot p, \ p' \in A\}$ consists of only one element by the assumption \eqref{eq:assumption}. Hence we see that 
\begin{align*}
x_p^\prime &= \begin{cases}
\min(\{x_p - x_{p'} \mid p' \lessdot p, \ p' \in \widetilde{\Pi} \setminus A\} \cup \{x_p \mid p' \lessdot p, \ p' \in A\}) + \widetilde{\phi}_{\{q_1,\ldots,q_i\}}({\bm u})_p \\
\quad\quad\quad\quad\quad\quad\quad\quad\quad\quad \text{if } p \in \{q_1,\ldots,q_i\}, \\
x_p+\widetilde{\phi}_{\{q_1,\ldots,q_i\}}({\bm u})_p \;\;\;\quad \text{otherwise}
\end{cases} 
\end{align*}
for $p \in \widetilde{\Pi} \setminus A$.
Let us apply $f_{\{q_1,\ldots,q_i\}}$ to $\widetilde{\phi}_{\{q_1,\ldots,q_i\}} \circ f^{-1} ((x_p)_p) = (x_p^\prime)_p$. If we write 
\[
f_{\{q_1,\ldots,q_i\}} \circ \widetilde{\phi}_{\{q_1,\ldots,q_i\}} \circ f^{-1} ((x_p)_{p \in \widetilde{\Pi} \setminus A}) = f_{\{q_1,\ldots,q_i\}} ((x_p^\prime)_{p \in \widetilde{\Pi} \setminus A}) = (x'' _p)_{p \in \widetilde{\Pi} \setminus A},
\] 
then it holds for $p \in \widetilde{\Pi} \setminus A$ that 
\begin{align*}
x'' _p &=\begin{cases}
\min(\{x_p - x_{p'} \mid p' \lessdot p, \ p' \in \widetilde{\Pi} \setminus A \} \cup \{x_p \mid p' \lessdot p, \ p' \in A\}) &\text{ if } p \in \{q_1,\ldots,q_i\}, \\
x_p &\text{ otherwise}. 
\end{cases}
\end{align*}
Combining this with the first step, we conclude the desired equality \eqref{eq:goal}. 
\end{proof}

As the following example shows, the transfer map $\widetilde{\phi}$ is not necessarily described as a composition of combinatorial mutations in $M_\r$ if we drop the assumption \eqref{eq:assumption}. 

\begin{ex}\label{ex:counter_example}
Let us consider the pure marked poset in Figure \ref{marked_Hasse_counter_example} with a marking $\lambda = (\lambda_1, \ldots, \lambda_4) \in \z^4$ such that $\lambda_1 \leq \lambda_2 \leq \lambda_4$ and $\lambda_1 \leq \lambda_3 \leq \lambda_4$. 
We regard the marked poset polytopes as living in $\r^3$.
If $\lambda_2 = \lambda_3$, then the marked poset satisfies the assumption in \cref{thm:marked_chain_order}; hence we can apply \cref{thm:marked_chain_order} to the associated marked poset polytopes.
However, if $\lambda_2 \neq \lambda_3$, then this is not the case. 
For instance, if $\lambda_2 < \lambda_3$, then the transfer map $\widetilde{\phi} \colon \r^3 \rightarrow \r^3$ is given as follows: 
\begin{align}
\widetilde{\phi}(x,y,z) &= (\min\{x-\lambda_2,x-z\},\min\{y-z,y-\lambda_3\},z-\lambda_1)\notag\\
&= \begin{cases}
(x-z,y-z,z-\lambda_1) &\text{ if }z \geq \lambda_3, \\
(x-z,y-\lambda_3,z-\lambda_1) &\text{ if }\lambda_2 \leq z \leq \lambda_3, \\
(x-\lambda_2,y-\lambda_3,z-\lambda_1) &\text{ if }z \leq \lambda_2. 
\end{cases}\label{eq:three_affine_functions}
\end{align}
Hence, even if we apply any translation to the marked order polytope, the transfer map never becomes piecewise-linear. 
More precisely, we cannot make all the three affine maps $\r^3 \rightarrow \r^3$ in \eqref{eq:three_affine_functions} fix the origin.
This implies that the transfer map $\widetilde{\phi}$ cannot be described as a composition of combinatorial mutations in $M_\r$.

\begin{figure}[!ht]
\begin{center}
   \includegraphics[width=5.0cm,bb=80mm 190mm 130mm 230mm,clip]{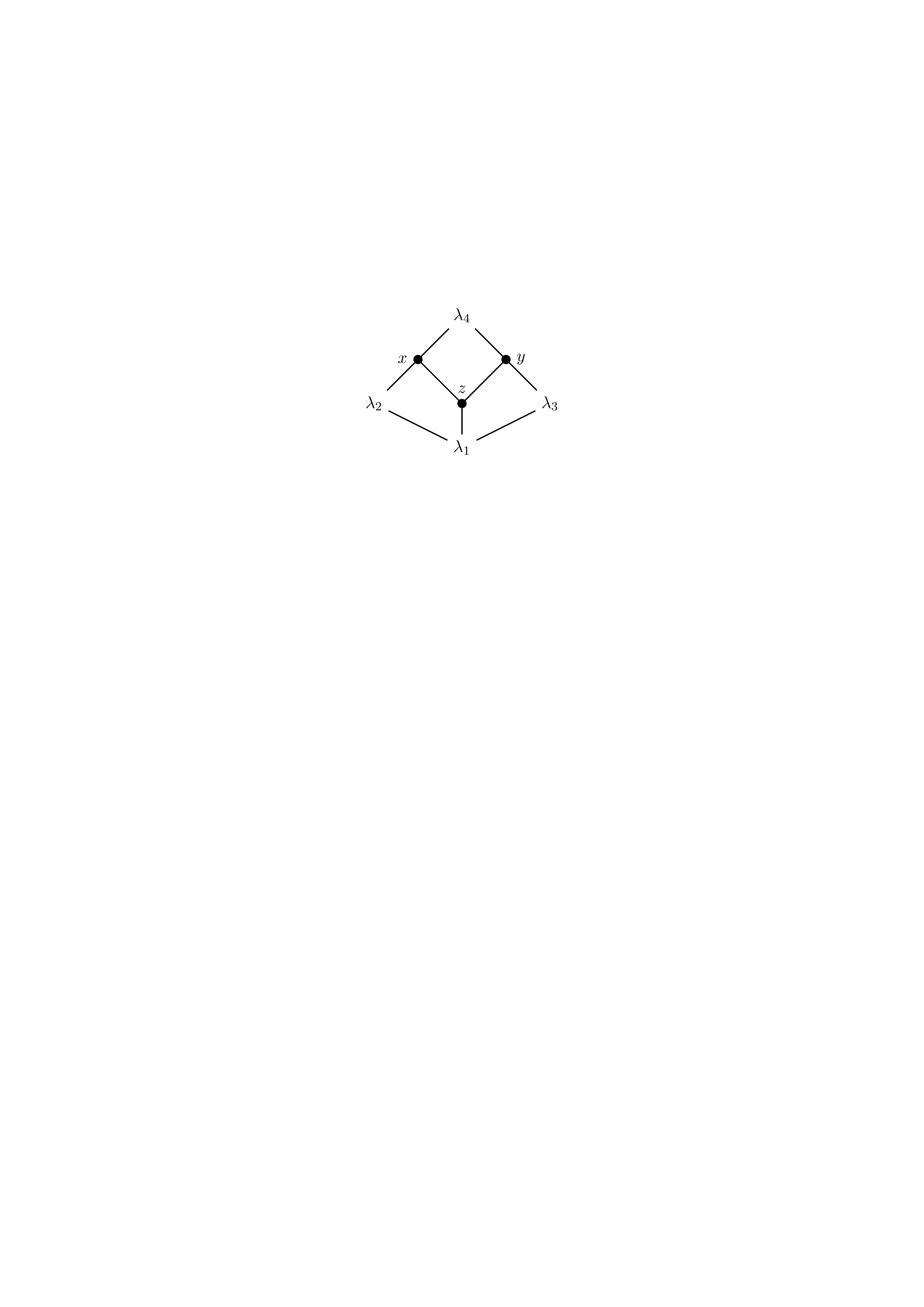}
	\caption{The marked Hasse diagram considered in \cref{ex:counter_example}.}
	\label{marked_Hasse_counter_example}
\end{center}
\end{figure}
\end{ex}

\subsection{Type $A$ case}

Let $G = SL_{n+1} (\c)$, and $\lambda \in P_+$. We write $\lambda_{\geq k} \coloneqq \sum_{k \leq \ell \leq n} \langle \lambda, h_\ell \rangle$ for $1 \leq k \leq n$. Let $\mathcal{O}_\lambda$ (resp., $\mathcal{C}_\lambda$) denote the marked order (resp., chain) polytope associated with a marked poset whose Hasse diagram is given in Figure \ref{type_A_marked_Hasse}.

\begin{figure}[!ht]
\begin{center}
   \includegraphics[width=10.0cm,bb=40mm 120mm 170mm 230mm,clip]{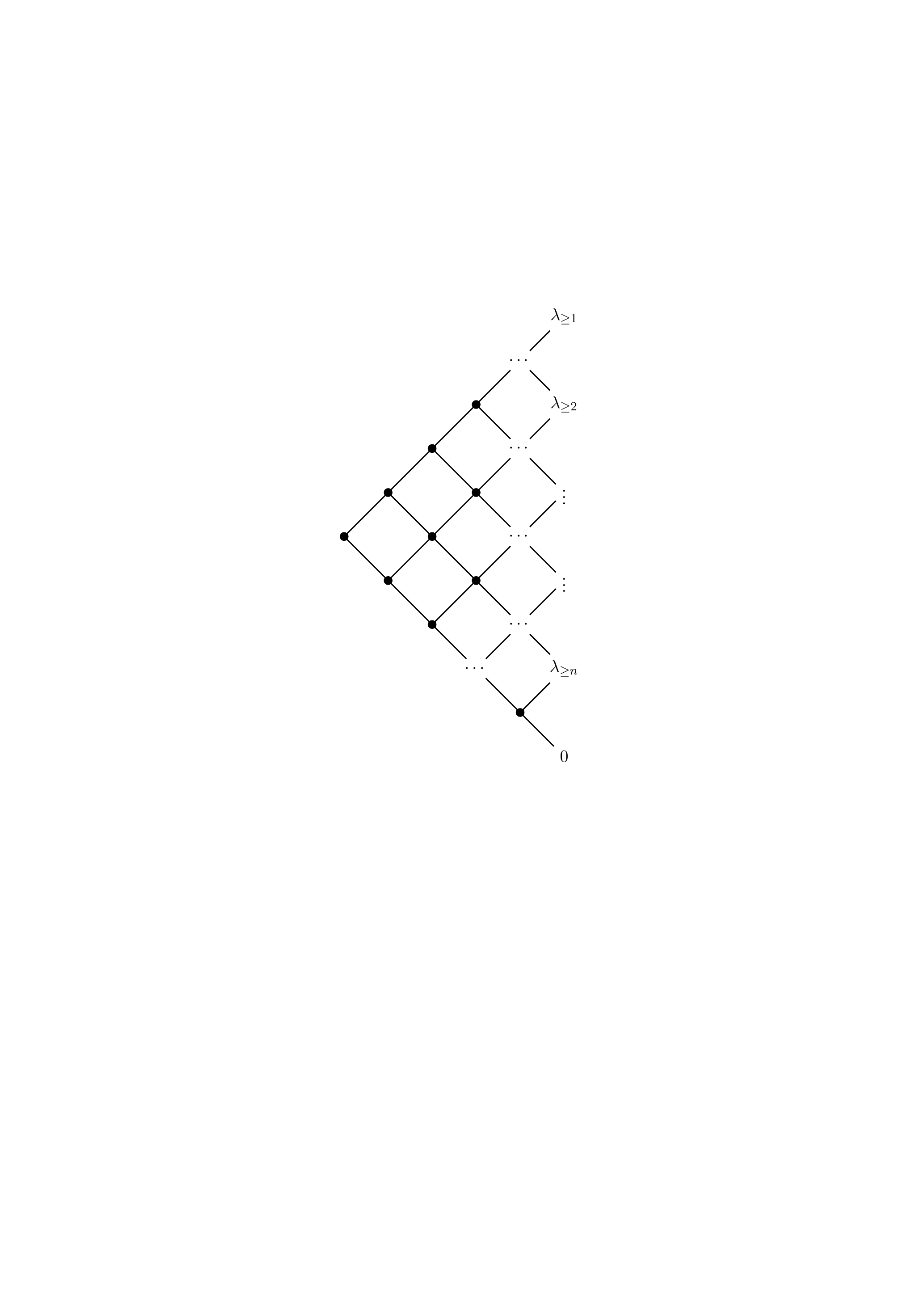}
	\caption{The marked Hasse diagram in type $A_n$.}
	\label{type_A_marked_Hasse}
\end{center}
\end{figure}

By the definition, the marked order polytope $\mathcal{O}_\lambda$ coincides with the Gelfand--Tsetlin polytope $GT(\lambda)$ (see \cref{ex:GT_polytopes_type_A}), and the marked chain polytope $\mathcal{C}_\lambda$ coincides with the FFLV polytope $FFLV(\lambda)$ (see \cite[equation (0.1)]{FeFL1}). 

Since the associated marked poset satisfies the assumption in \cref{thm:marked_chain_order}, the following theorem is an immediate consequence of \cref{thm:marked_chain_order}. 

\begin{thm}\label{t:relation_with_FFLV_type_A}
The following hold.
\begin{enumerate}
\item[{\rm (1)}] For all $\lambda \in P_+$, the polytopes $GT(\lambda)-{\bm u}$ and $FFLV(\lambda)-\widetilde{\phi}({\bm u})$ are combinatorially mutation equivalent in $M_\r$, where ${\bm u} \in GT(\lambda)$ is a lattice point satisfying \eqref{eq:assumption}. 
\item[{\rm (2)}] The dual polytopes $GT(2\rho)^\vee$ and $FFLV(2\rho)^\vee$ are combinatorially mutation equivalent in $N_\r$.
\end{enumerate}
\end{thm}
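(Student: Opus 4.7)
The plan is to derive both parts directly from \cref{thm:marked_chain_order}, once one verifies that the marked poset of Figure~4.1 meets its hypotheses.

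First I would check the two structural assumptions. Purity of the type $A_n$ GT Hasse diagram is immediate from its triangular staircase shape: every maximal chain has length $n$, running from the bottom element $a_1^{(n)}$ through a cover at each row up to a unique top marked element. To see that the condition on the marking is automatically satisfied for every $\lambda\in P_+$, one observes that the top marked elements in the staircase occur at pairwise distinct ranks, so the requirement ``$\lambda_a=\lambda_b$ whenever $r(a)=r(b)$'' is vacuous. Since by definition $\mathcal{O}_\lambda=GT(\lambda)$ and $\mathcal{C}_\lambda=FFLV(\lambda)$, part~(1) follows from \cref{thm:marked_chain_order}~(1) applied to any lattice point ${\bm u}\in GT(\lambda)$ satisfying \eqref{eq:assumption}.

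For part~(2), I would specialize $\lambda=2\rho$. Here $\langle 2\rho,h_i\rangle=2$ for every $i$, so the top-row markings are $(2\rho)_{\geq k}=2(n-k+1)$, which are proportional (by a factor of $2$) to the ranks of the corresponding top elements of the staircase. A straightforward rank-by-rank verification (already visible in the $SL_3$ case $\lambda_1=4,\lambda_2=2,\lambda_3=0$ forcing the unique interior point $(a_1^{(1)},a_2^{(1)},a_1^{(2)})=(3,1,2)$) shows that the unique interior lattice point ${\bm u}_0$ of $GT(2\rho)$, supplied by \cref{c:unique_lattice_point_reflexivity}, actually satisfies \eqref{eq:assumption}. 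Then ${\bm u}_0'\coloneqq\widetilde{\phi}({\bm u}_0)$ is an interior lattice point of $FFLV(2\rho)$ (in fact the unique one, namely the all-ones vector up to scaling), and applying part~(1) with ${\bm u}={\bm u}_0$ yields combinatorial mutation equivalence in $M_\r$ between $GT(2\rho)-{\bm u}_0$ and $FFLV(2\rho)-{\bm u}_0'$, both containing the origin in their interior.

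Finally, to promote this to the dual statement in $N_\r$, I would walk along the mutation sequence produced in part~(1) and dualise it step by step using \cref{prop:compatibility} and \cref{p:compatibility_well-defined}: each piecewise-linear mutation $\varphi_{w,F}$ on the translated polytope corresponds under the polar dual to a well-defined combinatorial mutation ${\rm mut}_w(\,\cdot\,,F)$ on the dual lattice polytope, exactly when the intermediate image remains a convex rational polytope. The main obstacle is precisely this intermediate-convexity bookkeeping: one must confirm that the mutation sequence constructed in the proof of \cref{thm:marked_chain_order}, when applied to the translate $GT(2\rho)-{\bm u}_0$, keeps every intermediate polytope convex so that \cref{p:compatibility_well-defined} dualises it stepwise into a bona fide chain of combinatorial mutations in $N_\r$ from $GT(2\rho)^\vee$ to $FFLV(2\rho)^\vee$.
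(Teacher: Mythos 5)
Your proposal matches the paper's approach: both parts of \cref{t:relation_with_FFLV_type_A} are deduced directly from \cref{thm:marked_chain_order} after checking that the type $A_n$ GT marked poset is pure and that its marked elements occupy pairwise distinct ranks (so the rank-equality hypothesis on the marking is vacuous), with the $N_\r$ statement obtained by dualising via \cref{prop:compatibility} and \cref{p:compatibility_well-defined}. One small slip worth correcting, though it does not affect the validity of the argument: the poset's unique minimum is the marked element $0$, not $a_1^{(n)}$, and every maximal chain has length $2n$ rather than $n$, so the marking $2\rho$ in fact \emph{equals} the rank marking $\lambda^r$ rather than being twice it --- this is precisely what makes \cref{thm:marked_chain_order}~(2) and \cite[Corollary 23]{FFP} directly applicable and produces the rank vector $(r_p)_p$ as the unique interior lattice point of $GT(2\rho)$.
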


\subsection{Type $C$ case}

Let $G = Sp_{2n} (\c)$, and $\lambda \in P_+$. We write $\lambda_{\leq k} \coloneqq \sum_{1 \leq \ell \leq k} \langle \lambda, h_\ell \rangle$ for $1 \leq k \leq n$. Let $\mathcal{O}_\lambda$ (resp., $\mathcal{C}_\lambda$) denote the marked order (resp., chain) polytope associated with a marked poset whose Hasse diagram is given in Figure \ref{type_C_marked_Hasse}.

\begin{figure}[!ht]
\begin{center}
   \includegraphics[width=10.0cm,bb=40mm 140mm 170mm 230mm,clip]{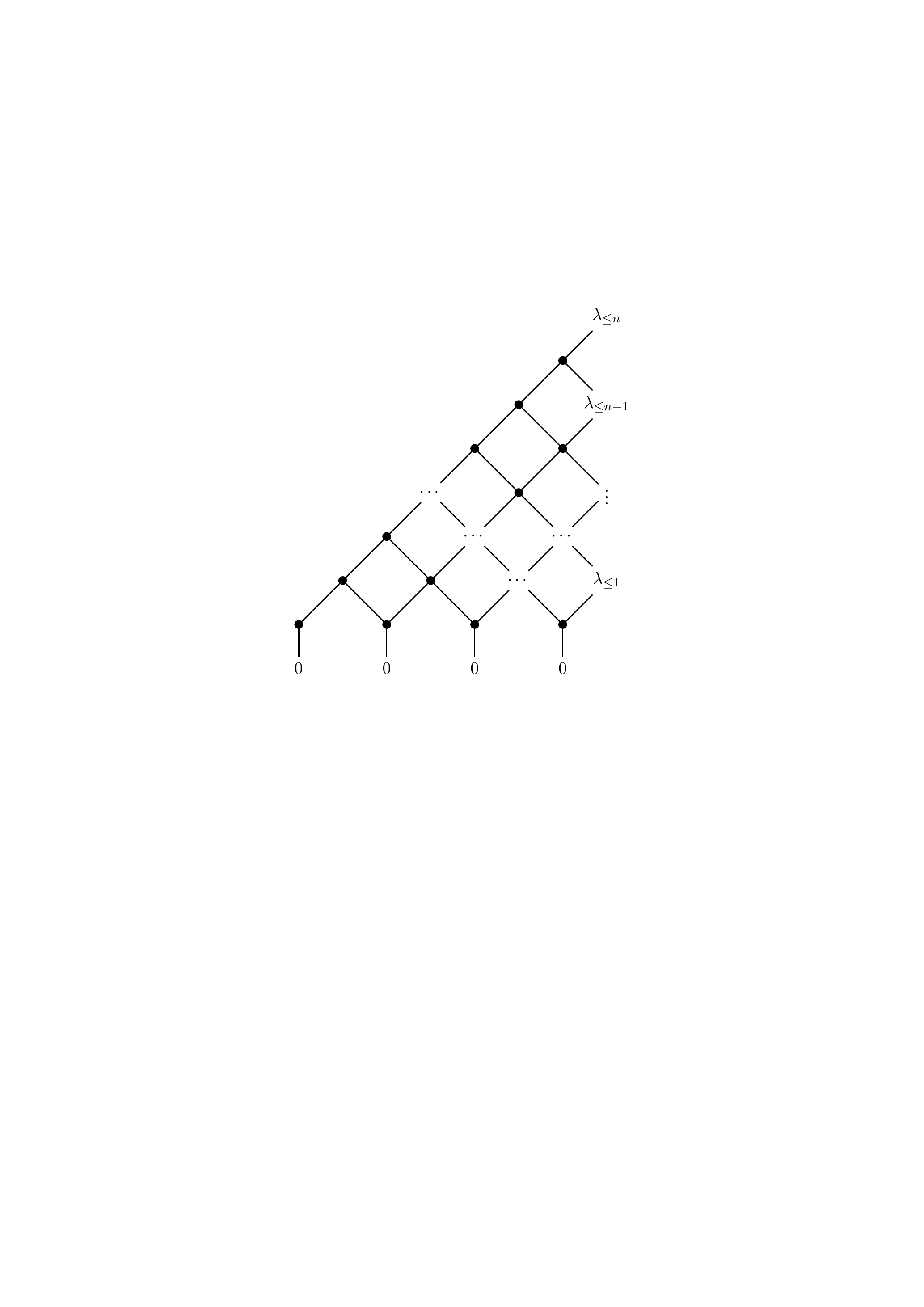}
	\caption{The marked Hasse diagram in type $C_n$.}
	\label{type_C_marked_Hasse}
\end{center}
\end{figure}

By the definition, the marked order polytope $\mathcal{O}_\lambda$ coincides with the Gelfand--Tsetlin polytope $GT_{C_n}(\lambda)$ of type $C_n$ (see \cref{ex:GT_polytopes_type_C}), and the marked chain polytope $\mathcal{C}_\lambda$ coincides with the FFLV polytope $FFLV_{C_n}(\lambda)$ of type $C_n$ (see \cite[equation (1.2)]{FeFL2}).

Similar to \cref{t:relation_with_FFLV_type_A}, the following theorem holds. 

\begin{thm}\label{t:relation_with_FFLV_type_C}
The following hold.
\begin{enumerate}
\item[{\rm (1)}] For all $\lambda \in P_+$, the polytopes $GT_{C_n}(\lambda) - {\bm u}$ and $FFLV_{C_n}(\lambda) - \widetilde{\phi}({\bm u})$ are combinatorially mutation equivalent in $M_\r$, where ${\bm u} \in GT_{C_n}(\lambda)$ is a lattice point satisfying \eqref{eq:assumption}. 
\item[{\rm (2)}] The dual polytopes $GT_{C_n}(2\rho)^\vee$ and $FFLV_{C_n}(2\rho)^\vee$ are combinatorially mutation equivalent in $N_\r$.
\end{enumerate}
\end{thm}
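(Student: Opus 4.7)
The plan is to mirror the proof of Theorem \ref{t:relation_with_FFLV_type_A} and reduce Theorem \ref{t:relation_with_FFLV_type_C} to \cref{thm:marked_chain_order}. By construction $\mathcal{O}_\lambda = GT_{C_n}(\lambda)$ and $\mathcal{C}_\lambda = FFLV_{C_n}(\lambda)$, so it suffices to verify that the marked poset $(\widetilde{\Pi}_{C_n}, A_{C_n}, \lambda)$ encoded by Figure \ref{type_C_marked_Hasse} satisfies the hypotheses of \cref{thm:marked_chain_order}: that $\widetilde{\Pi}_{C_n}$ is pure, and that every marking induced by $\lambda \in P_+$ is constant on $A_{C_n}$-elements of a common rank.

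First I would verify purity by inspection of the zig-zag structure of Figure \ref{type_C_marked_Hasse}: each maximal chain traverses every $a$-row layer and every $b$-row layer exactly once, so all maximal chains have a common length. Next I would check the marking hypothesis. The nonzero left-border markings $\lambda_{\leq 1} < \lambda_{\leq 2} < \cdots < \lambda_{\leq n}$ occupy pairwise distinct ranks along the left boundary, while the right-border markings are all $0$ and occupy ranks disjoint from those of the $\lambda_{\leq k}$'s; hence $\lambda_a = \lambda_b$ whenever $r(a) = r(b)$. This guarantees that \eqref{eq:assumption} admits a lattice solution ${\bm u}$, so part (1) of the theorem follows immediately from \cref{thm:marked_chain_order} (1).

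For part (2), I would specialize to $\lambda = 2\rho$. Since $\langle 2\rho, h_i \rangle = 2$ for all $i$, one has $\lambda_{\leq k} = 2k$; under the rank structure of $\widetilde{\Pi}_{C_n}$ these values coincide (possibly after a uniform rescaling) with the rank function, so the marking induced by $2\rho$ agrees with $\lambda^r$. By \cref{thm:marked_chain_order} (2), the translated polytopes $\overline{\mathcal{O}}(\widetilde{\Pi}_{C_n}, A_{C_n}, \lambda^r)$ and $\overline{\mathcal{C}}(\widetilde{\Pi}_{C_n}, A_{C_n}, \lambda^r)$, each containing the origin as their unique interior lattice point, are combinatorially mutation equivalent in $M_\r$. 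Applying \cref{prop:compatibility} together with \cref{p:compatibility_well-defined} to pass to polar duals then yields the asserted combinatorial mutation equivalence of $GT_{C_n}(2\rho)^\vee$ and $FFLV_{C_n}(2\rho)^\vee$ in $N_\r$.

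The main obstacle I anticipate is the purely combinatorial bookkeeping in the Hasse diagram of Figure \ref{type_C_marked_Hasse}: tracking the interleaving of the $a$-row and $b$-row elements, confirming that the left-border markings $\lambda_{\leq k}$ and the right-border zeros sit at disjoint ranks (without which the marking hypothesis of \cref{thm:marked_chain_order} would fail for general $\lambda \in P_+$), and verifying for part (2) that the numerical values $2k$ of $2\rho$ genuinely match the rank function so that the marking reduces to $\lambda^r$. Once this combinatorial check is completed, the rest of the argument is a formal consequence of \cref{thm:marked_chain_order} and polar duality, exactly as in the type-$A$ case.
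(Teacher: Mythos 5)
Your proposal is correct and follows the paper's intended route: the paper itself dispatches this theorem with ``Similar to Theorem~\ref{t:relation_with_FFLV_type_A},'' and you correctly unpack that to mean: verify the type $C_n$ marked poset in Figure~\ref{type_C_marked_Hasse} is pure, check that the marking hypothesis $\lambda_a = \lambda_b$ whenever $r(a) = r(b)$ holds for $\lambda \in P_+$, and then invoke \cref{thm:marked_chain_order}. One small caveat: in part (2) you hedge with ``possibly after a uniform rescaling,'' but the marking induced by $2\rho$ matches $\lambda^r$ \emph{exactly} (for instance, for $n=2$ one checks $r(\lambda_{\leq 1}) = 2$, $r(\lambda_{\leq 2}) = 4$, the $0$'s all at rank $0$, precisely the values $\lambda_{\leq k} = 2k$ given by $2\rho$); a rescaling would actually \emph{not} be permissible, since \cref{thm:marked_chain_order}(2) is stated only for the marking $\lambda^r$, so you should remove the hedge and verify the exact match. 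With that tightened, the argument is complete.
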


\bigskip

\vspace{4mm}

\end{document}